\newcommand\redST{\bgroup\markoverwith{\textcolor{red}{\rule[0.5ex]{2pt}{0.4pt}}}\ULon}
\newtheorem{lemma}{Lemma}[section]
\newtheorem{theorem}[lemma]{Theorem}
\newtheorem{corollary}[lemma]{Corollary}
\newtheorem{proposition}[lemma]{Proposition}
\newtheorem{definition}[lemma]{Definition}
\theoremstyle{remark}
\newtheorem*{acknowledgements}{Acknowledgements}
\newtheorem{remark}[lemma]{Remark}
\newtheorem{example}[lemma]{Example}
\newcommand\AAA{\mathbb A}
\newcommand\BB{\mathbb B}
\newcommand\CC{\mathbb C}
\newcommand\PP{\mathbb P}
\newcommand\ZZ{\mathbb Z}
\newcommand\sA{\mathcal A}
\newcommand\sB{\mathcal B}
\newcommand\sC{\mathcal C}
\newcommand\sD{\mathcal D}
\newcommand\sE{\mathcal E}
\newcommand\sF{\mathcal F}
\newcommand\sI{\mathcal I}
\newcommand\sM{\mathcal M}
\newcommand\sN{\mathcal N}
\newcommand\sO{\mathcal O}
\newcommand\sP{\mathcal P}
\newcommand\sR{\mathcal R}
\newcommand\sT{\mathcal T}
\newcommand\sV{\mathcal V}
\newcommand\hol{\mathcal O}
\newcommand\ra{\rightarrow}
\newcommand{\Proj}{\operatorname{Proj}}
\newcommand{\Aut}{\operatorname{Aut}}
\numberwithin{equation}{section}
\title[Even surfaces of general type with $K^2=8$, $p_g=4$ and 
$q=0$]{The moduli space of even surfaces of general type  with 
$K^2=8$, $p_g=4$ and
$q=0$}
\author{Fabrizio Catanese}
\address{Fabrizio Catanese \\Lehrstuhl Mathematik 
VIII\\Mathematisches Institut der Universit\"at Bayreuth\\ 
NWII\\Universit\"atstr. 30\\95447
Bayreuth\\Germany}
\email{fabrizio.catanese@uni-bayreuth.de}
\author{Wenfei Liu}
\address{Wenfei Liu \\Fakult\"at f\"ur Mathematik\\Universit\"at 
Bielefeld\\Universit\"atsstr. 25\\33615 Bielefeld\\Germany}
\email{liuwenfei@math.uni-bielefeld.de}
\author{Roberto Pignatelli}
\address{Roberto Pignatelli \\Dipartimento di 
Matematica\\Universit\`a di Trento\\via Sommarive 14\\38123 
Trento\\Italy}
\email{roberto.pignatelli@unitn.it}
\begin{document}

\thanks{The present cooperation  was supported by the DFG
Forschergruppe 790 `Classification of algebraic surfaces and
compact complex manifolds', and by the Emmy Noether  Nachwuchsgruppe
`Modulr\"aume und Klassifikation von algebraischen Fl\"achen und Nilmannigfaltigkeiten mit linksinvarianter komplexer Struktur'}

\date{\today}

\begin{abstract}

  Even surfaces of general type with
$K^2=8$, $p_g=4$ and $q=0$ were found by Oliverio \cite{O} as 
complete intersections of bidegree $(6,6)$ in a weighted projective 
space
$\PP(1,1,2,3,3)$.

In this article we prove
that the moduli space of even surfaces of general type with
$K^2=8$, $p_g=4$ and $q=0$ consists of two
$35$-dimensional irreducible components intersecting in a codimension 
one subset (the first  of these components is the closure of the open set
considered by  Oliverio).
 All the surfaces in  the second  component have a singular  canonical model,
hence we get a new example of a generically nonreduced moduli space.

Our result  gives  a posteriori a complete description of the half-canonical rings of the above even surfaces.
The method of proof is, we believe,  the most interesting part of the paper. After describing the graded  ring
of a cone we are able, combining the explicit description of some subsets of the moduli space, some deformation
theoretic arguments, and finally some local algebra arguments, to describe the whole moduli space.

This is the first time that the classification of a class of surfaces can only be done  using
moduli theory: up to now  first the surfaces were classified, on the basis of some numerical inequalities,
or other arguments, and later on the moduli spaces were investigated.

\end{abstract}
\maketitle

\section{Introduction} Algebraic surfaces with geometric genus 
$p_g=4$ have been a very natural object of study since
  Noether's seminal paper \cite{noether} in the 19-th century. Because 
their canonical map  has  image
  $\Sigma_1$ which most of the times  is a surface in the projective 3-dimensional space $\PP^3$, 
defined therefore by a single polynomial equation.

In the  20-th century new examples of such surfaces were found by 
several authors (\cite{maxwell}, \cite{franchetta}, \cite{enriques},
\cite{kodaira}), and  a substantial part of Chapter VIII of 
Enriques's book \cite{enriques} is devoted to the discussion and the 
proposal of
several constructions, in the range  $4\leq K^2\leq 10$. New examples 
were then found in  \cite{babbage} and \cite{C} (see also 
\cite{bucharest}).
Nowadays the investigation of such surfaces is an interesting chapter 
of the theory of
surfaces with small invariants, encompassing (easier)  existence 
questions and (harder) investigation of    moduli spaces.

By the inequalities of Noether and Bogomolov-Miyaoka-Yau, minimal 
surfaces of general type with $p_g=4$ satisfy $4\leq K^2\leq 45$.
Only recently the upper bound $ K^2 =  45$ was shown to be achieved 
(\cite{45}), while the first historical examples of surfaces which we 
mentioned above
are surfaces with $4\leq K^2\leq 7$; by the work of Ciliberto and 
Catanese, \cite{C} and \cite{sbc},  existence is known for each $4\leq 
K^2\leq 28$.

Irregular surfaces with $p_g=4$ were later investigated in \cite{cs}: 
in this case $K^2 \geq 8$ since, by \cite{debarre}, one has
$K^2 \geq  2 p_g$ for irregular surfaces  \footnote{ That the case 
$K^2=8$, $p_g=4$ and $q=1$ actually occurs is shown by
  the family of double covers of the product   $E \times \PP^1$, where 
$E$ is an elliptic curve and the branch divisor has numerical type 
$(4,6)$.};
  while $K^2 \geq
12$ if the canonical map has degree 1.

Surfaces with $p_g=4$ and $K^2 = 4$ were classified by Noether and Enriques, but 
it took the work of  Horikawa and
Bauer  ( \cite{Ho1,Ho2,Ho3,B}) to finish the classification of  the 
surfaces with $p_g=4$ and $4\leq K^2\leq 7$ (necessarily regular).
These  are `essentially' classified, in the sense that the moduli 
space is shown to be a union of certain (explicitly described) locally closed subsets:
  but there is   missing complete knowledge
of the incidence structure of these subsets of the  moduli space.  We 
refer to the survey \cite{BCP3} for a good account
of the range  $4\leq K^2\leq 7$, and to
\cite{homalg} for a previous more general survey (containing the 
construction of several new examples).

Minimal surfaces with $K^2=8,p_g=4,q=0$ have  been the object of 
further work  by several authors \cite{C,CFM,O}.
  The  surfaces constructed by Ciliberto have a birational canonical 
map,   are not even, and have a trivial torsion group  
$H_1(S, \ZZ)$
(unlike the ones considered in \cite{CFM}); the ones constructed by Oliverio  are simply
connected (see \cite{D}), and they are even (meaning that the 
canonical divisor is divisible by two: i.e.,
the second Stiefel Whitney class $w_2(S) = 0$, equivalently, the 
intersection form is even).

Therefore, for $K^2=8,p_g=4,q=0$ there are at least three
different topological types \cite[Remark 5.4]{O}, contrasting the 
situation for (minimal) surfaces with $p_g = 4$, $K^2\leq 7$ which,
when they have the same $K^2$, are homeomorphic to each
other. Recently Bauer and the third author \cite{BP} classified 
minimal surfaces with $K^2=8,p_g=4,q=0$ whose canonical map is 
composed with an
involution (while examples with canonical map of degree three are 
given in \cite{MP}).

 Their work shows that the moduli space of 
minimal surfaces with
$K^2=8,p_g=4,q=0$ has at least four irreducible components: and a new fifth one is
described in the present paper.

Therefore the classification of minimal surfaces with 
$K^2=8,p_g=4,q=0$ seems a very challenging problem, yet not 
completely out of reach.

The present article provides a first step in this direction, 
classifying all the even surfaces and completely describing the 
corresponding subset
$M^{ev}_{8,4,0}$ of the moduli space.

  This is our main result: denote by $M^{ev}_{8,4,0}$   the moduli 
space of even surfaces of general type with $K^2=8$, $p_g=4$ and 
$q=0$.  We show that
$M^{ev}_{8,4,0}$, which a priori  consists of several connected 
components of the whole moduli space $M_{8,4,0}$, is indeed a single 
connected component
  of the  moduli space $M_{8,4,0}$.  Oliverio
\cite{O} found out that, if  $|K_S|$ is base point free (this 
condition determines an  open set of the moduli space)  the 
half-canonical ring $R(S,L)$ realizes
the canonical model $X$ of the surface $X$ as a
$(6,6)$ complete  intersection in $\PP(1^2,2,3^2)$. Since conversely 
such complete intersections having at worst Du Val singularities
(i.e., rational double points) yield such canonical models, one gets 
as a result that this open set
is an irreducible unirational open set  of dimension
$35$ in the  moduli space $M^{ev}_{8,4,0}$,  hence it gives rise to 
an irreducible component which we denote by
$M_{\sF}$ (here $\sF$ stands for ``free" canonical system). We treat 
here the case where
$|K_S|$ has base points and, completing Oliverio's result, we obtain 
the following
\begin{theorem} The moduli space $M_{8,4,0}^{ev}$ consists of two 
$35$-dimensional irreducible components
$M_{\sF}$ and $M_{\sE}$, such that the general points of $M_{\sF}$ 
correspond to surfaces with base  point free canonical systems, while 
all points of
$M_{\sE}$ correspond to surfaces whose canonical  system has base 
points. Moreover $M_{\sF}$ and
$M_{\sE}$ intersect in a codimension  one
irreducible  subset.
\end{theorem}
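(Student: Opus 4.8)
The plan is to establish the theorem in three movements: first construct and identify the second component $M_{\sE}$ via an explicit model, then show that the surfaces with base points in the canonical system are exactly those parametrized by $M_{\sE}$, and finally analyze the intersection $M_{\sF} \cap M_{\sE}$ by a local algebra computation. For the first step I would start from a surface $S$ whose canonical system $|K_S|$ has a base locus, study the associated map and the half-canonical line bundle $L$ with $L^{\otimes 2} \cong \sO_S(K_S)$, and compute the half-canonical ring $R(S,L) = \bigoplus_n H^0(S, nL)$. The expectation, guided by Oliverio's analysis of the base-point-free case, is that in the presence of base points the canonical model $X$ acquires a fixed singularity (forced because $|K_X|$ fails to be very ample there), and that $R(S,L)$ again presents $X$ as a complete intersection, but now inside a \emph{different} (or degenerate) configuration in $\PP(1^2,2,3^2)$ — morally the graded ring of the relevant cone described in the earlier sections. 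One then checks that the locally closed set of such models is irreducible, unirational, and $35$-dimensional by a parameter count on the equations modulo the automorphisms of the weighted projective space, giving an irreducible component $M_{\sE}$, all of whose points carry a singular canonical model and hence (by semicontinuity of the canonical degree map) a canonical system with base points.

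Next I would prove that $M^{ev}_{8,4,0} = M_{\sF} \cup M_{\sE}$ set-theoretically, i.e.\ that there are no further components. The key point is a dichotomy: for an even surface with these invariants the canonical system is either base-point free — and then Oliverio's argument identifies $X$ as a smooth-in-codimension-one $(6,6)$ complete intersection in $\PP(1^2,2,3^2)$, so the surface lies in $\overline{M_{\sF}}$ — or it has base points, and then the half-canonical ring computation pins down $X$ as one of the degenerate complete intersections of the $M_{\sE}$ family. To make this airtight I would run the Riemann–Roch and vanishing bookkeeping on $nL$ for small $n$ (using $p_g=4$, $q=0$, $K^2=8$, $K=2L$, $L^2=2$, $LK=4$, $\chi(\sO_S)=5$), determine the generators and relations of $R(S,L)$ in degrees $\le 6$, and show the two cases above exhaust the possibilities for the graded structure. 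This simultaneously yields the claimed \emph{a posteriori} description of the half-canonical rings.

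For the last assertion — that $M_{\sF}$ and $M_{\sE}$ meet in an irreducible codimension-one subset — I would work deformation-theoretically near a surface $S_0$ lying in $M_{\sE}$. Using the explicit complete-intersection presentation I would compute $H^1(S_0, \Theta_{S_0})$ and the obstruction space $H^2(S_0,\Theta_{S_0})$, or rather work with the canonical model $X_0$ and its tangent sheaf / $T^1_{X_0}$, to show that the Kuranishi space of $X_0$ is smooth of dimension $35$ along a smooth $34$-dimensional locus (this is $M_{\sF}\cap M_{\sE}$) and that near the general point of that locus the base space of the semiuniversal deformation looks locally like two smooth $35$-dimensional branches meeting along their common $34$-dimensional hyperplane — concretely, one isolates the single equation in the local ring of the moduli space that governs whether the base point of $|K|$ can be smoothed, and shows its zero locus is this codimension-one set. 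That the intersection is \emph{irreducible} follows by identifying it with an explicit unirational family (the degenerate complete intersections that happen to specialize from the $M_{\sF}$ side), and that it is exactly codimension one in each component follows from the dimension count $35 - 34 = 1$. The main obstacle, I expect, is this final local-algebra step: controlling the singularities of $M^{ev}_{8,4,0}$ at a general point of the intersection precisely enough to conclude it is a transverse-type crossing of exactly two smooth branches, rather than something with higher multiplicity or extra components — this is where the interplay of the explicit graded ring of the cone, the deformation theory, and a careful local computation is unavoidable, and it is also the source of the generic nonreducedness of the moduli space along $M_{\sE}$.
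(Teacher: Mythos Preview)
Your proposal has a genuine structural gap at the very first step, and this propagates through the rest.

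You assume that when $|K_S|$ has base points the canonical model $X$ is still a complete intersection in $\PP(1^2,2,3^2)$, only of some ``degenerate'' type. This is false. In the non-base-point-free case the half-canonical ring $R(S,L)$ needs \emph{two additional generators} $u,v$ in degrees $4$ and $5$, so $X$ embeds in $\PP(1^2,2,3^2,4,5)$. The quotient $R(S,L)/(x_1)=R(C,2Q)$ is computed explicitly in the paper (Proposition~\ref{rc2p}): it has nine relations $f_1,\dots,f_9$ and sixteen first syzygies, and is \emph{not} a complete intersection. The component $M_{\sE}$ is built by writing these nine relations as the $4\times 4$ pfaffians of a $6\times 6$ extrasymmetric matrix (Proposition~\ref{ME}); the component $M_{\sF}$ is recovered through a different, new ``$MV$'' format (Proposition~\ref{MV}). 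So your plan of ``determining generators and relations of $R(S,L)$ in degrees $\le 6$'' and reading off a complete-intersection model does not get off the ground.

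The second gap is methodological. You propose to classify the half-canonical rings directly and then deduce the decomposition of the moduli space. The paper explicitly remarks that this direct route was only achieved by heavy computer calculation and cannot be reproduced by hand. The actual argument goes the other way: one first produces the two families $M_{\sE}$ and $M_{\sF}$ via the two formats, and then proves they exhaust $M^{ev}_{8,4,0}$ by analysing the Kuranishi space of the \emph{cone} $C_R=\Proj\bigl(\BB/(f_1,\dots,f_9)\bigr)$ over the hyperplane section. One computes the graded pieces of $T^1$ of $C_R$ (Proposition~\ref{firstdeformation}), lifts to second order and extracts the obstruction equations
\[
c_0 a_5=0,\qquad c_1 a_5=0,\qquad c_0 d_7-c_1 b_6=0
\]
(Proposition~\ref{secondlift}), and then uses a purely local-algebra lemma (Lemma~\ref{tangentcone}) to show that any ideal containing perturbations of these three quadrics and whose zero locus has two codimension-two components must, after an analytic change of coordinates, equal $(c_0 a,\,c_1 a,\,c_0 d-c_1 b)$. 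This identifies the Kuranishi base with $V(c_0,c_1)\cup V(a,c_0d-c_1b)$, and matching these two pieces with the two formats finishes the proof (Theorem~\ref{thatsallfolks}).

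Finally, your expected local picture at the intersection --- ``two smooth $35$-dimensional branches meeting transversely along a hyperplane, governed by a single equation'' --- is not what happens. The base of the Kuranishi family has embedding codimension two and is \emph{not} a local complete intersection: three quadratic equations cut out a codimension-two set with two components. Moreover all surfaces in $M_{\sE}$ have a node on the canonical model, so the Kuranishi family of $S$ is nonreduced along the whole of $M_{\sE}$, not just along the intersection. The irreducibility and dimension $34$ of $M_{\sF}\cap M_{\sE}$ are obtained not from a tangent-space computation but by exhibiting it explicitly as the family $M_{\sE\sF}$ inside the $MV$ format (Proposition~\ref{MV}).
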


The subscript $\sE$ in $M_{\sE}$ stands for `extrasymmetric', 
since the general points of $M_{\sE}$ correspond to
surfaces whose half-canonical ring admits an `extrasymmetric' presentation.

 Let us also point out that we are describing the Gieseker moduli space,
and that in fact all the surfaces in the component $M_{\sE}$ have a node,
hence the Kuranishi family for $S$ is non-reduced at each point of this component.

Our proof  starts with Reid's method of infinitesimal extension of 
hyperplane sections  (cf. \cite{R1}), which is the algebraic 
counterpart
(in terms of graded rings) of the inverse of the classical 
geometrical method of sweeping the cone: taking the projective cone
$Cone(X)$ with vertex $P$ over  a projective variety, any hyperplane 
section of $Cone(X)$ not passing through $P$ is isomorphic to
$X$, and one can make it degenerate to a hyperplane section of the 
cone passing through $P$, which  is the cone $Cone(H\cap X)$ with 
vertex $P$ over the hyperplane
section of
$X$,
$ H \cap X$. Viewing the process in the inverse way, one may see $X$ 
as a deformation of $Cone(H\cap X)$ and indeed Schlessinger, Mumford 
and Pinkham
(\cite{schlessinger, mumford, pinkham}) set up the theory of 
deformations of varieties  with a $\CC^*$-action to analyse this 
situation.

The advantage in the surface case is that the hyperplane  section is 
a curve $C$ and the graded ring of the cone over $C$ is much more 
tractable than in the higher dimensional case.

It is so once more in our special situation. In order to describe the graded 
ring  $R(S,L)$ associated to a half  canonical divisor $L$, we first 
calculate (Proposition~
\ref{rc2p}), in the case where the canonical system has base points, the quotient ring $R$ associated to the restriction  of 
$R(S,L)$ to a smooth curve $C$ in $|L|$.  Then  we would like  to recover $R(S,L)$ 
as an extension ring,
which of course can be viewed as a deformation of the cone  $C_R$ 
associated to the graded ring $R$.

We can find some of these extension rings using  two different ``formats", an old one and a new one;
the old one  consists in  writing the relations in
$R$ in terms of  pfaffians of certain extrasymmetric skewsymmetric matrices 
(Example \ref{ex: extrasym}), while the new one is more complicated (see  Example  \ref{ex: MV}). These formats 
produce in a natural way
two families of such surfaces embedded in a weighted projective space  of 
dimension 6 (see Propositions \ref{ME} and \ref{MV}) via their half-canonical rings.

As written in the abstract, our method gives only a posteriori a complete description 
of the half-canonical rings of the above surfaces (this was first achieved by the second author
via  heavy computer-aided calculations  which are impossible to be reproduced in a paper).

  We then prove that these two families fill the
moduli space via a crucial
 study of the local deformation space of the cone $C_R$, obtained by first studying the  infinitesimal deformations of 
first and second order.  Then, using some local 
algebra arguments, we 
show that there cannot be  higher order obstructions.

An interesting novel  feature is that the deformation space has  embedding codimension two and is
not a local complete intersection.

\begin{acknowledgements}
  Part of this paper is contained in the second author's thesis (in 
Chinese) submitted to Peking University in fulfillment
of the duties required for earning the doctoral degree; the second 
author   would like to
thank his advisors J.~Cai and F.~Catanese for their support and for 
useful suggestions during the research. The authors  would like to thank Ingrid Bauer and Paolo Oliverio
for useful conversations.
\end{acknowledgements}

\section{Preliminaries} This section collects some  notions and facts 
that will be used in the sequel.

\subsection{Notation} The varieties that we consider  in this paper are defined over 
the complex numbers $\CC$.

Throughout the article $S$ shall be the minimal model of a surface of 
general type with $K_S^2=8$, $p_g(S)=4$ and $q (S)=0$,
and we shall assume that $S$ is {\bf even}, which means that there 
exists a divisor class $L$ such that $ 2L \equiv K_ S $, $\equiv$ 
denoting, as classically, linear
equivalence. The first simple observation is that, if $S$ is even, 
then $S$ is minimal, since for an exceptional curve
of the first kind $E$ one would have $ -1 = E \cdot K_S = 2 E \cdot 
L$, a contradiction.

Observe moreover that $L$ is not a priori unique, since the class of 
$L$ is determined up to addition of a  $2$-torsion divisor class,
and these form a finite group (only a posteriori we shall see that 
the class of $L$ is unique, since  all our surfaces will be shown to
be deformation equivalent to a weighted  complete intersection, hence
they are all simply connected).

Hence we shall throughout consider a pair $(S,L)$ as above, observing 
that deformations of the pair $(S,L)$ correspond to deformations of 
$S$ up to an \'etale base
change.

  Given a projective algebraic variety $Y$ and a line bundle $L$,
$$R(Y,L):=\oplus_{n\geq
0}H^0(Y,nL)$$ is the graded  ring of sections of the pair $(X,L)$.

The canonical model of such a surface of general type $S$ is the 
projective spectrum $X : =  Proj ( \sR (S, K_S))$ of the canonical 
ring of $S$;
$X$ is also  the projective spectrum $=  Proj ( \sR (S, L))$ of the 
semicanonical ring associated to the class of $L$, and is the
only birational model of $S$ with ample canonical class and at most 
rational double points as singularities. The above (graded) rings are 
all
finitely generated $\CC$-algebras. Observe that the line bundle $L$ 
descends to a line bundle on $X$ by the results of Artin in 
 \cite{artin}.

Observe that any deformation of $S$ yields a family of relative 
canonical algebras and (up to \'etale base change) a family of 
relative half canonical algebras.
In particular, by the semicontinuity theorem, a  minimal system of (homogeneous)
generators for the semicanonical ring $ \sR (S, L)$ yields an 
embedding of $X$
into a weighted projective space $\PP(d_1, d_2, \dots , d_h)$, and 
locally a deformation of $S$ yields a family of projectively normal 
subschemes
of $\PP(d_1, d_2, \dots , d_h)$. 

By
abuse of notation, we denote sometimes an element of a polynomial 
ring and its image in some quotient ring by the same symbol. 

$\CC[\epsilon]:=\CC[t]/(t^2)$ is
the ring of dual numbers, so that $\epsilon$ is meant to be a first 
order parameter, i.e., $\epsilon$ has degree $0$ and $\epsilon^2=0$.
  Given a
$\CC$-algebra $R$, $R[\epsilon]:=R\otimes_\CC \CC[\epsilon]$.

  Finally, let $M=(m_{ij})$ be a $4 \times 4$ skewsymmetric matrix. 
Then recall that
the  pfaffian of $M$
is
  \[
  pf(M)=m_{12}m_{34}-m_{13}m_{24}+m_{14}m_{23}.
\]

\subsection{Deformation of closed subschemes and graded 
rings}\label{subschemedeformationsection}

As we shall sometimes not only use the analytical theory of 
deformations, we observe the connection between Hilbert schemes
and deformation theory, which is central in our arguments.

\begin{theorem}[Ideal-Variety Correspondence]
  There is a natural bijection between the set of closed subschemes of 
a weighted projective space and the set of saturated homogeneous 
ideals of the
(weighted) polynomial ring, which associates any closed subscheme to 
its saturated homogeneous ideal.
\end{theorem}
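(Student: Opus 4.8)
The plan is to reproduce, in the weighted setting, the classical Serre correspondence between closed subschemes and saturated ideals, being careful only at the two points where the non-standard grading could interfere. Write $\PP=\Proj S$ with $S=\CC[x_1,\dots,x_h]$, $\deg x_i=d_i>0$, irrelevant ideal $\mathfrak m=S_+$, and assume $\dim\PP\geq 1$ (the case $h=1$ is trivial). One uses the two standard functors: the exact sheafification $M\rightsquigarrow\widetilde M$ from graded $S$-modules to quasi-coherent sheaves on $\PP$, with $\widetilde S=\sO_\PP$ and $\widetilde M$ restricting on the chart $D_+(f)$ to the sheaf associated to the $S_{(f)}$-module $M_{(f)}$; and $\Gamma_*(\sF):=\bigoplus_{n\in\ZZ}H^0(\PP,\sF(n))$. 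To a closed subscheme $Z\subseteq\PP$ with ideal sheaf $\sI_Z$ I attach
\[
I(Z):=\ker\bigl(S\longrightarrow\Gamma_*(\sO_Z)\bigr),
\]
a homogeneous ideal; from the exact sequence $0\to\sI_Z\to\sO_\PP\to\sO_Z\to 0$ and the identification $\Gamma_*(\sO_\PP)=S$ this equals $\Gamma_*(\sI_Z)$. Conversely, to a homogeneous ideal $I\subseteq S$ I attach $Z(I):=\Proj(S/I)$, the subscheme of $\PP$ cut out by $\widetilde I\hookrightarrow\widetilde S=\sO_\PP$.

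The first thing to check is that $Z(I(Z))=Z$ for every $Z$. This is immediate from the canonical isomorphism $\widetilde{\Gamma_*(\sF)}\cong\sF$, valid for any coherent sheaf $\sF$ on the $\Proj$ of a Noetherian graded ring that is finitely generated over its degree-zero part; its proof never invokes generation in degree one, so it applies here. Applied to $\sF=\sI_Z$ it gives $\widetilde{I(Z)}=\widetilde{\Gamma_*(\sI_Z)}\cong\sI_Z$ inside $\sO_\PP$, so $Z(I(Z))$ and $Z$ carry the same ideal sheaf, hence coincide.

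Next I would identify $I(Z(I))$ with the saturation $I^{\mathrm{sat}}:=\{\,f\in S:\mathfrak m^N f\subseteq I\text{ for some }N\geq 0\,\}$. For this one uses the graded local-cohomology sequence of the ring $S/I$ with respect to its irrelevant ideal,
\[
0\to H^0_{\mathfrak m}(S/I)\to S/I\to\Gamma_*(\sO_{\Proj(S/I)})\to H^1_{\mathfrak m}(S/I)\to 0,
\]
where $H^0_{\mathfrak m}(S/I)=\{\,\bar f\in S/I:\mathfrak m^N\bar f=0\text{ for some }N\,\}$. Pulling the kernel of $S/I\to\Gamma_*(\sO_{Z(I)})$ back along $S\twoheadrightarrow S/I$ yields exactly $I^{\mathrm{sat}}$. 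In particular, taking $I=I(Z)$ and using the previous step, $I(Z)^{\mathrm{sat}}=I\bigl(Z(I(Z))\bigr)=I(Z)$, so $I(Z)$ is always saturated; and conversely every saturated $I$ satisfies $I(Z(I))=I$. Hence $Z\mapsto I(Z)$ and $I\mapsto Z(I)$ are mutually inverse bijections between the closed subschemes of $\PP$ and the saturated homogeneous ideals of $S$, and since both assignments manifestly preserve inclusions the bijection is natural. (Note $(\mathfrak m)^{\mathrm{sat}}=S$, so the empty subscheme corresponds only to the ideal $S$ and no ambiguity remains.)

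The one genuinely non-formal point — the ``hard part'' such as it is — is justifying, in the weighted case, the two facts used above: $\Gamma_*(\sO_\PP)=S$ and $\widetilde{\Gamma_*(\sF)}\cong\sF$ for coherent $\sF$. The first holds because $S$ is a normal domain and the complement $\operatorname{Spec}S\setminus V(\mathfrak m)$ of the (codimension $\geq 2$) vertex carries a $\CC^*$-action with quotient $\PP$, so by Hartogs $\bigoplus_n H^0(\PP,\sO_\PP(n))=\Gamma\bigl(\operatorname{Spec}S\setminus V(\mathfrak m),\sO\bigr)=S$. The second is the usual $\Proj$ computation on the charts $D_+(f)$, which only uses that the homogeneous $f$ of positive degree cover $\PP$; here $\sO_\PP(n)$, although possibly not invertible, is still coherent, which is all the argument requires. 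Everything else is the formal bookkeeping of the functors $\sim$ and $\Gamma_*$, identical to the case of ordinary projective space.
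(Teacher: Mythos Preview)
Your argument is correct and is precisely the standard Serre correspondence, carried out with the minor care needed for the weighted grading; the paper itself gives no independent proof of this statement but simply cites Hartshorne, Ex.~II.5.10 for ordinary projective space and Dolgachev \cite[3.1.2(iv)]{D} for the weighted case, so your write-up is in fact more detailed than what appears there. The content of those references is essentially the argument you have given.
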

\begin{proof} For closed subschemes of usual projective space, see 
for example \cite[Ex.~II.5.10]{Ha1}; for the weighted case, see
\cite[3.1.2(iv)]{D}.
\end{proof}

\begin{theorem}\label{subschemedeformation}
  The deformation functor of closed subschemes in weighted projective 
space satisfies Schlessinger's conditions 
$H_0,H_{\epsilon},\bar{H},H_f$, and
hence is prorepresentable.
\end{theorem}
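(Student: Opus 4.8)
The plan is to verify directly that the functor $F$ in question — which to $A\in\mathrm{Art}_\CC$ assigns the set of closed subschemes $\sZ\subset\PP_A:=\PP\times_\CC\mathrm{Spec}\,A$ that are flat over $A$ and restrict, over the residue field $\CC$, to the prescribed subscheme $Z\subset\PP$ — satisfies Schlessinger's conditions $H_0$, $H_\epsilon$, $\bar H$, $H_f$, and then to invoke Schlessinger's theorem \cite{schlessinger} to get prorepresentability. The first step is to rephrase $F$ algebraically: by the Ideal--Variety Correspondence, applied over each Artinian base (where the field-theoretic bijection is replaced by its flat counterpart — a flat $\sZ$ over $A$ corresponds to a saturated homogeneous ideal $J\subset R_A:=R\otimes_\CC A$ with $(R_A/J)_n$ free over $A$ of the expected rank for $n\gg0$), the whole discussion can be carried out with graded ideals, which makes the checks transparent.

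The core of the argument is the pair of conditions involving $\CC[\epsilon]$ (namely $H_\epsilon$) together with the condition $\bar H$ on fibre products $A'\times_A A'$, and both follow from a single gluing statement. Let $A'\to A$ and $A''\to A$ be morphisms in $\mathrm{Art}_\CC$ with $A''\to A$ surjective, and set $\tilde A:=A'\times_A A''$; note $\PP_{\tilde A}=\PP_{A'}\cup_{\PP_A}\PP_{A''}$, since $\PP_{(-)}$ is a constant family. Given $\sZ'\in F(A')$ and $\sZ''\in F(A'')$ with common image $\sZ\in F(A)$, the Ferrand--Milnor amalgamation $\tilde\sZ:=\sZ'\cup_{\sZ}\sZ''$ is a closed subscheme of $\PP_{\tilde A}$, and the standard flatness lemma for modules over a fibre product of rings (valid when one of the two maps is surjective) shows that $\tilde\sZ$ is flat over $\tilde A$ with $\tilde\sZ\times_{\tilde A}A'=\sZ'$ and $\tilde\sZ\times_{\tilde A}A''=\sZ''$, and that it is the unique such lift; hence the natural map
\[ F(\tilde A)\ \xrightarrow{\ \sim\ }\ F(A')\times_{F(A)}F(A'') \]
is bijective whenever $A''\to A$ is surjective. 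Specializing to $A=\CC$, $A''=\CC[\epsilon]$ yields $H_\epsilon$ (a bijection, not merely surjectivity), and specializing to $A'=A''$ with $A'\to A$ a small extension yields $\bar H$. The only things to verify by hand are that homogeneity and saturation of ideals are preserved under the amalgamation and that the maps to the fixed $\PP$ are respected, all of which is routine.

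It remains to treat $H_0$ and $H_f$. Condition $H_0$ holds by construction, since $F(\CC)=\{Z\}$ is a one-element set. For $H_f$ one identifies the tangent space $F(\CC[\epsilon])$ with $H^0(Z,\sN_{Z/\PP})$, where $\sN_{Z/\PP}=\mathcal{H}om_{\sO_Z}(\sI_Z/\sI_Z^2,\sO_Z)$ is the normal sheaf — no smoothness of $\PP$ is needed here, the conormal sheaf $\sI_Z/\sI_Z^2$ being coherent in any case — and this space is finite-dimensional over $\CC$ because $Z$ is projective and $\sN_{Z/\PP}$ is coherent. With all four conditions established, Schlessinger's theorem \cite{schlessinger} gives prorepresentability.

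I do not expect a genuine obstacle here: the proof is bookkeeping around two standard inputs, the flat-gluing (Ferrand--Milnor) lemma and the coherence of the normal sheaf. The points that require a word of care are the relative form of the Ideal--Variety Correspondence over an Artinian base — identifying the correct flatness condition and checking that saturation passes through the amalgamation — and the observation that the gluing lemma is insensitive to the grading of $R$ and to the quotient singularities of $\PP$, being purely a statement about modules over fibre products of rings. As an alternative one may note that the weighted Hilbert scheme exists as a scheme locally of finite type over $\CC$ — for instance as a closed subscheme of $\mathrm{Hilb}_{\PP^M}$ after a Veronese re-embedding $\PP\hookrightarrow\PP^M$ — so that $F$ is prorepresented by a complete local ring and the Schlessinger conditions hold automatically; we have preferred the direct verification above.
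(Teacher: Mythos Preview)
Your proof is correct. The paper's own proof is simply a one-line citation to \cite[Prop.~3.2.1]{Sern}; you have written out the standard verification that underlies that reference (gluing via the fibre-product/Ferrand--Milnor lemma for $\bar H$ and $H_\epsilon$, and finiteness of $H^0(Z,\sN_{Z/\PP})$ for $H_f$), so the approaches coincide in substance. Your closing remark that one could instead appeal to the existence of the (weighted) Hilbert scheme via a Veronese re-embedding is also a legitimate shortcut and is in the same spirit as the paper's treatment, which invokes the Hilbert scheme a few lines later anyway.
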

\begin{proof} This is \cite[Prop.~3.2.1]{Sern}.
\end{proof}

Let $X$ be a closed subscheme of a weighted projective space $\PP$. 
Let $\AAA=\CC[x_1,\cdots,x_n]$ be the polynomial ring of $\PP$ and 
$I\subset\AAA$
the saturated homogeneous ideal of $X$.
\begin{theorem}\label{gradedringdeformation} If $depth\AAA/I\geq 2$, 
then the deformation theory of the embedded projective scheme $X$ is 
the same as
that of the corresponding graded ring $\AAA/I$.
\end{theorem}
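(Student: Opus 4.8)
The plan is to produce mutually inverse natural transformations between the deformation functor of the embedded subscheme $X\subset\PP$ --- which, by Theorem~\ref{subschemedeformation}, is the restriction to Artin local $\CC$-algebras of the Hilbert functor at the point $[X]$ --- and the functor of flat graded deformations of the graded ring $\AAA/I$, and then to invoke Schlessinger's theory (applicable to the graded side as well, since $\AAA/I$ is finitely generated in fixed degrees over $\CC$) to conclude that the two functors, and hence their hulls, tangent spaces and obstruction theories, all coincide.

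First I would unwind the depth hypothesis. Writing $\mathfrak m\subset\AAA$ for the irrelevant ideal, there is the standard four-term exact sequence
\[
0\to H^0_{\mathfrak m}(\AAA/I)\to \AAA/I\to \bigoplus_{n\geq 0}H^0(X,\mathcal O_X(n))\to H^1_{\mathfrak m}(\AAA/I)\to 0,
\]
and $\operatorname{depth}(\AAA/I)\geq 2$ is precisely the vanishing of the two outer terms; hence $\AAA/I=\bigoplus_{n\geq 0}H^0(X,\mathcal O_X(n))$ is the full section ring $\Gamma_*(\mathcal O_X)$ (and likewise the twisted modules, the ideal sheaf, and $N_{X/\PP}$, are recovered from their sheafifications). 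Next, given a flat family $\mathcal X_B\subset\PP_B$ over an Artin local ring $B$ with closed fibre $X$, I would set $\mathcal A_B:=\bigoplus_{n\geq 0}\pi_{B*}\mathcal O_{\mathcal X_B}(n)$ and prove, by induction on the length of $B$ using a small extension $0\to(t)\to B\to B'\to 0$ with $(t)\cong\CC$ and the resulting sequence $0\to\mathcal O_X(n)\to\mathcal O_{\mathcal X_B}(n)\to\mathcal O_{\mathcal X_{B'}}(n)\to 0$, that $\mathcal A_B$ is $B$-flat with $\mathcal A_B\otimes_{\CC}\,\CC=\AAA/I$ on the closed fibre: the connecting maps $H^0(\mathcal X_{B'},\mathcal O(n))\to H^1(X,\mathcal O_X(n))$ factor through $H^1_{\mathfrak m}(\AAA/I)$ in the appropriate degree, which is zero, so one gets short exact sequences degree by degree and flatness follows from the local criterion. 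A graded Nakayama argument shows $\mathcal A_B$ is again generated over $B$ in the same degrees as $\AAA/I$, so it carries a canonical presentation as a quotient of $\AAA\otimes_{\CC}B$ lifting $\AAA\to\AAA/I$ and is a bona fide graded deformation; this is the transformation $\Phi$.

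In the opposite direction, $\Psi$ sends a graded deformation $\mathcal A_B=(\AAA\otimes_{\CC}B)/\mathcal I_B$ to $\operatorname{Proj}_B\mathcal A_B\hookrightarrow\PP_B$, which is flat over $B$ with closed fibre $\operatorname{Proj}(\AAA/I)=X$. That $\Psi\circ\Phi=\mathrm{id}$ and $\Phi\circ\Psi=\mathrm{id}$ both come from the first step applied in families: $\operatorname{Proj}_B$ of a section ring recovers the subscheme (saturatedness in families), and $\Gamma_*$ of a graded ring whose fibres have depth $\geq2$ recovers the ring, all compatibly with base change. An isomorphism of (prorepresentable) deformation functors is exactly the assertion of the theorem.

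The main obstacle is the flatness-and-base-change claim inside the construction of $\Phi$: one must know that $\bigoplus_n\pi_{B*}\mathcal O_{\mathcal X_B}(n)$ stays $B$-flat and commutes with arbitrary base change along the deformation, and this is precisely where $\operatorname{depth}\geq2$ is used, via the vanishing of $H^1_{\mathfrak m}$ (equivalently, cohomology-and-base-change for the sheaves $\mathcal O(n)$, $n\geq1$, on the fibres). It is cleanest to carry out this step entirely in terms of local cohomology of the graded module $\AAA/I$ and its twists over the Artin base, or, equivalently, in terms of the punctured affine cone over $X$ with its $\CC^*$-action --- the setting in which Schlessinger and Pinkham developed this very equivalence. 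For a general, non-Artinian base one would additionally invoke lower semicontinuity of depth, so that nearby fibres again have depth $\geq2$ once the central fibre $X$ does.
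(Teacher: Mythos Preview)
The paper's own proof is nothing more than a citation to \cite[Prop.~8.8]{Ha2}, with the remark that the weighted case is handled by the same argument. Your proposal is, in outline, exactly the content of that reference: pass between embedded deformations and graded ring deformations via $\operatorname{Proj}$ and $\Gamma_*$, identify $\AAA/I$ with the section ring of $X$ using $H^0_{\mathfrak m}=H^1_{\mathfrak m}=0$, and propagate flatness over Artin local bases by induction on length through small extensions. So in spirit you are taking the same route the paper points to, only spelling it out.

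One sentence needs repair. You assert that the connecting map $H^0(\mathcal X_{B'},\mathcal O(n))\to H^1(X,\mathcal O_X(n))$ ``factors through $H^1_{\mathfrak m}(\AAA/I)$ in the appropriate degree, which is zero''. This is not right: the graded pieces of $H^1_{\mathfrak m}(\AAA/I)$ control the cokernel of $(\AAA/I)_n\to H^0(X,\mathcal O_X(n))$, whereas $H^1(X,\mathcal O_X(n))$ is (a graded piece of) $H^2_{\mathfrak m}(\AAA/I)$, whose vanishing would require depth $\geq 3$. The way to get surjectivity of $H^0(\mathcal X_B,\mathcal O(n))\to H^0(\mathcal X_{B'},\mathcal O(n))$ is instead the commutative square
\[
\begin{array}{ccc}
(\AAA\otimes B)_n & \twoheadrightarrow & (\AAA\otimes B')_n\\
\downarrow & & \downarrow\\
H^0(\mathcal X_B,\mathcal O(n)) & \longrightarrow & H^0(\mathcal X_{B'},\mathcal O(n)).
\end{array}
\]
The top arrow is obviously surjective, and by the inductive hypothesis together with graded Nakayama the right vertical arrow is surjective (since on the closed fibre it is the isomorphism $(\AAA/I)_n\cong H^0(X,\mathcal O_X(n))$ coming from $H^1_{\mathfrak m}=0$); hence the bottom arrow is surjective too. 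With this correction the induction closes, and the rest of your argument (flatness via the local criterion, graded Nakayama to present $\mathcal A_B$ as a quotient of $\AAA\otimes B$, and the mutual inversion $\Phi\Psi=\Psi\Phi=\mathrm{id}$) goes through as written.
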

\begin{proof}
  For the case of usual projective space, we refer to 
\cite[Prop.~8.8]{Ha2}; for the weighted case, one can adapt the same 
proof as [ibid].
\end{proof} Thus in good situations, in particular in ours, we can 
study the  deformations of weighted projective schemes via the 
deformations of
their graded rings, indeed the parameter spaces we shall use will be locally dominating the Hilbert scheme. 

\begin{theorem}[Hilbert scheme]
  There is a natural scheme parametrizing the set of closed subschemes of 
a fixed weighted projective space having a given Hilbert polynomial.
\end{theorem}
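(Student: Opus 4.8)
The plan is to reduce the statement to Grothendieck's classical existence theorem for the Hilbert scheme of ordinary projective space, in the form recorded in \cite{Sern} or \cite{Ha2}. First I would choose an integer $m$ divisible by each of the weights $d_1,\dots,d_n$ and large enough that the line bundle $\sO_\PP(m)$ on $\PP=\PP(d_1,\dots,d_n)$ is very ample (such $m$ exists because $\sO_\PP(km)$ becomes a line bundle once $d_i\mid km$ for all $i$, and $\PP$ is projective); the complete linear system $|\sO_\PP(m)|$ then furnishes a closed embedding $\iota\colon\PP\hookrightarrow\PP^N$ with $\iota^*\sO_{\PP^N}(1)\cong\sO_\PP(m)$. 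A flat family of closed subschemes of $\PP$ over a base $T$ is then the same datum as a flat family of closed subschemes of $\PP^N$ over $T$ all of whose members are contained in $\iota(\PP)$; and the condition that a point of $\mathrm{Hilb}_{\PP^N}$ parametrize a subscheme of the fixed closed subscheme $\iota(\PP)$ defines a closed subscheme $H\subseteq\mathrm{Hilb}_{\PP^N}$ — this is the standard fact that $\mathrm{Hilb}_Y\hookrightarrow\mathrm{Hilb}_{\PP^N}$ is a closed immersion whenever $Y$ is closed in $\PP^N$.

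Next I would match the numerical invariants. Over a connected base a flat family of closed subschemes of $\PP$ has, in the graded sense, a locally constant Hilbert function of the fibres; and since $\sO_\PP(m)$ is a line bundle, for each residue $j$ modulo $m$ the graded sheaf $\bigoplus_{k\ge 0}(\pi_T)_*\sO_{\sZ}(km+j)$ is flat over $T$ in high degrees, so the \emph{weighted} Hilbert function is locally constant along the family exactly when the ordinary Hilbert polynomial of the image in $\PP^N$ is. Consequently, fixing the weighted Hilbert data $P$ of the fibres picks out inside $H$ a union of connected components, and this union is the desired scheme $\mathrm{Hilb}^P_{\PP}$; being a union of components of a closed subscheme of a projective Hilbert scheme, it is itself a projective scheme. (An alternative, essentially equivalent, route would exploit the presentation of $\PP$ as a geometric quotient of $\AAA^n\setminus\{0\}$ by $\mathbb{G}_m$ and descend the universal family, but the Veronese embedding is the most economical.)

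The one point that genuinely requires care is the meaning of ``a given Hilbert polynomial'' in the weighted setting: for a closed subscheme $X\subseteq\PP$ the function $n\mapsto\dim_\CC H^0(X,\sO_X(n))$ is, for $n\gg 0$, not a polynomial but a \emph{quasi-polynomial}, periodic with period dividing $m$. The correct datum to fix is therefore this quasi-polynomial — equivalently, the ordinary Hilbert polynomial of $\iota(X)\subseteq\PP^N$ together with the finitely many values of the Hilbert function in the degrees not divisible by $m$ — and one must check that precisely this datum is locally constant in $T$-flat families. Once this bookkeeping is in place the argument is purely formal and imports nothing beyond Grothendieck's theorem; the finite generation of the graded rings occurring throughout the paper guarantees that all the Hilbert schemes involved are of finite type over $\CC$.
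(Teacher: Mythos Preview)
Your argument is essentially correct, but it is worth noting that the paper does not give a proof at all: it simply cites Grothendieck's original construction \cite{grothendieck} for ordinary projective space and then invokes Haiman--Sturmfels \cite{sturmfels} for the multigraded generalisation, which covers weighted projective space as a special case. So you have supplied considerably more than the authors do.

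Your route via a Veronese embedding $\iota\colon\PP(d_1,\dots,d_n)\hookrightarrow\PP^N$ is the classical elementary reduction, and it works; the alternative cited by the paper is intrinsically multigraded and avoids choosing an auxiliary $m$. What your approach buys is that it stays entirely within the world of Grothendieck's theorem and needs no new machinery; what the Haiman--Sturmfels approach buys is that the ``Hilbert polynomial'' bookkeeping is handled once and for all in the correct generality, so the quasi-polynomial issue you raise never appears as a separate difficulty. Your observation that in the weighted setting the relevant invariant is a quasi-polynomial rather than a polynomial is exactly right, and indeed the paper's phrasing ``a given Hilbert polynomial'' is slightly loose on this point. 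The one place where your sketch could be tightened is the assertion that the full weighted Hilbert function is locally constant in flat families: for degrees $j$ not divisible by $m$ the sheaf $\sO_\PP(j)$ need not be invertible, so flatness of $\sO_{\sZ}(j)$ over $T$ is not automatic from flatness of $\sZ$ alone and deserves a word (e.g.\ by pushing everything to the affine cone, or by the semicontinuity argument you allude to). This is a genuine but minor technicality, not a gap in the strategy.
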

\begin{proof}  This result is due to Grothendieck \cite{grothendieck}, and has been
generalized in the multigraded case by \cite{sturmfels}.
\end{proof}

The role of the Hilbert scheme becomes more apparent when we shall take the deformation theory
of the cone  $Cone(H\cap X)$ over the hyperplane section of the canonical model $X$ in its half-canonical 
embedding in a weighted projective space. 

Here we shall use the results of Schlessinger and Pinkham (\cite{schlessinger}, \cite{pinkham}),
in particular Pinkham's result 

\begin{theorem}[Hilbert scheme and deformations]
  The natural morphism of the Hilbert scheme to the  Kuranishi space of $Cone(H\cap X)$ is smooth.
\end{theorem}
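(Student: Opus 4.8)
The plan is to recall Pinkham's argument (\cite{pinkham}; see also \cite{schlessinger}), which makes this statement the infinitesimal shadow of the classical ``sweeping of the cone''. Set $V := Cone(H\cap X)$, the affine cone over the smooth hyperplane section $C := H\cap X$; thus $V = \mathrm{Spec}(R)$ for $R$ the graded half--canonical ring of $C$, and $V$ is embedded in the smooth ambient scheme $\AAA := \mathrm{Spec}\,\CC[x_0,\dots,x_n]$, the affine cone over the weighted projective space containing $C$. Since $C$ is smooth, $V$ has a single singular point, the vertex, so that the local Hilbert scheme of (the closure of) $V$ at $[V]$ and the Kuranishi space $\mathrm{Def}(V)$ are both finite dimensional; the ``natural morphism'' $h$ sends an embedded deformation of $V\subset\AAA$ to its underlying abstract deformation. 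To prove $h$ smooth I would check the infinitesimal lifting criterion: given a small extension $A'\twoheadrightarrow A$ of local Artinian $\CC$--algebras with one--dimensional kernel $(t)$, an embedded deformation $\xi$ of $V$ over $A$, and an abstract deformation $\tilde\eta$ of $V$ over $A'$ reducing mod $t$ to the abstract deformation underlying $\xi$, I must exhibit an embedded deformation $\tilde\xi$ over $A'$ that lifts $\xi$ and induces $\tilde\eta$.

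The two facts that make this possible come from the transitivity triangle of cotangent complexes $L_{\AAA}\otimes_{\mathcal O_\AAA}\mathcal O_V\to L_V\to L_{V/\AAA}$. Applying $\mathrm{RHom}_{\mathcal O_V}(-,\mathcal O_V)$ and taking cohomology yields a long exact sequence relating the modules $T^i_{V/\AAA}$ governing embedded deformations (so $T^1_{V/\AAA}$ is the tangent space of the Hilbert scheme and $T^2_{V/\AAA}$ receives its obstructions) to the modules $T^i_V$ governing $\mathrm{Def}(V)$, through the cohomology groups $H^i(V,\sT_\AAA|_V)$ of the tangent sheaf of $\AAA$ restricted to $V$. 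As $\AAA$ is smooth this restricted sheaf is locally free, and as $V$ is affine its higher cohomology vanishes; hence the long exact sequence degenerates into a surjection $T^1_{V/\AAA}\twoheadrightarrow T^1_V$ together with an isomorphism $T^2_{V/\AAA}\cong T^2_V$. The surjectivity expresses that every first order deformation of the cone is obtained by perturbing its defining equations inside the fixed $\AAA$; this dovetails with the $\CC^*$--equivariance of $\mathrm{Def}(V)$ proved by Pinkham and with Theorem~\ref{gradedringdeformation}, which identifies deformations of the graded ring $R$ with embedded ones once $R$ has depth $\geq 2$ and $C$ is projectively normal, so that the ambient weighted projective space is rigid in flat families by semicontinuity of the Hilbert function.

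Granting these two facts, the lifting is formal. The obstruction $o(\xi)\in T^2_{V/\AAA}\otimes_\CC(t)$ to extending $\xi$ to an embedded deformation over $A'$ is carried by the injection $T^2_{V/\AAA}\hookrightarrow T^2_V$ to the obstruction to extending the underlying abstract deformation over $A'$; the latter vanishes since $\tilde\eta$ is such an extension, so $o(\xi)=0$ and some embedded lift $\xi'$ of $\xi$ over $A'$ exists. Now the abstract deformation underlying $\xi'$ and the deformation $\tilde\eta$ are two elements of the (nonempty) $T^1_V\otimes_\CC(t)$--torsor of abstract lifts of $\xi$ over $A'$, so they differ by a class $v\in T^1_V\otimes_\CC(t)$; lifting $v$ to $\tilde v\in T^1_{V/\AAA}\otimes_\CC(t)$ by the surjectivity and letting $\tilde v$ act on the $T^1_{V/\AAA}\otimes_\CC(t)$--torsor of embedded lifts of $\xi$, I obtain $\tilde\xi:=\xi'+\tilde v$, whose underlying abstract deformation is $\tilde\eta$ by equivariance of $h$ with respect to the two torsor structures. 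The one substantive point, and the place where care is really needed, is the cohomology vanishing behind the surjectivity and the injectivity, that is, the assertion that every abstract deformation of the cone is embedded: it is immediate for the affine cone as above, but if one prefers to phrase everything with the projective closure $\overline V\subset\PP(d_1,\dots,d_h,1)$ it must be extracted from a Bott--type vanishing on $\overline V$, which is exactly where the depth hypothesis of Theorem~\ref{gradedringdeformation} and the projective normality of $C$ re--enter.
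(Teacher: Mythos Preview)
The paper does not prove this theorem; it simply attributes the result to Pinkham \cite{pinkham} (building on \cite{schlessinger}) and moves on. Your sketch is the standard argument and is correct: the long exact sequence of cotangent modules for the closed immersion $V\hookrightarrow\AAA$, combined with the vanishing of $H^i(V,\sT_\AAA|_V)$ for $i>0$ on the affine cone, yields the surjection $T^1_{V/\AAA}\twoheadrightarrow T^1_V$ and the injection $T^2_{V/\AAA}\hookrightarrow T^2_V$, which is precisely what the infinitesimal lifting criterion needs (the full isomorphism on $T^2$ that you state is true but more than required). Your closing remark about reconciling the affine and projective pictures via the depth hypothesis of Theorem~\ref{gradedringdeformation} is apt and mirrors how the paper itself passes between graded rings and weighted projective subschemes.
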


It is important to remark that, whereas the Kuranishi  family is versal at any point, it is only semiuniversal
at the point corresponding to  $Cone(H\cap X)$: this is due to the $\CC^*$ action which stabilizes
the cone  $Cone(H\cap X)$ but not its small deformations.

Observe finally  that the local structure of the Gieseker moduli space is the quotient of the
Kuranishi  space of $X$ by the finite group $Aut(X)$ (see \cite{catanese} as a general reference), hence, in order to study the irreducible components of the moduli space,
openness questions are reduced to the study of the Kuranishi  space, in turn this is locally dominated
by the Hilbert scheme (or any parameter space dominating the latter).

\section{Oliverio's surfaces} Oliverio \cite{O} studied the even 
surfaces of general type with $K_S^2=8$, $p_g=4$ and $q=0$ whose 
canonical system is
base point free, showing that their canonical models are the general 
complete intersections  of bidegree $(6,6)$ in the weighted 
projective space
$\PP(1^2,2,3^2)$.

  Let $S$ be an even surface of general type with $K_S^2=8$, $p_g=4$ 
and $q=0$.  Let $L$ be a
half-canonical divisor, that is, $2L=K_S$.  We recall one more 
preliminary result.

\begin{lemma}\label{lispencil}
\begin{enumerate}
  \item[(i)] For any $k\in\mathbb Z$, $h^1(S,kL)=0$.
\item[(ii)] $h^0(S,L)=2$, $h^0(S,2L)=4$, $h^0(S,kL)=k^2-2k+5$ for $k\geq 3$.
\end{enumerate}

\end{lemma}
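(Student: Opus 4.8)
The plan is to compute the cohomology of $kL$ on $S$ for all $k \in \mathbb{Z}$ using Riemann--Roch, Serre duality, and the structure of the canonical map, exploiting heavily that $2L \equiv K_S$, $K_S^2 = 8$, $p_g = 4$, $q = 0$ and that $K_S$ is nef and big (as $S$ is minimal of general type).

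\emph{Step 1: vanishing of $h^1$.} First I would dispose of part (i). For $k \geq 2$ write $kL = K_S + (k-2)L$; since $(k-2)L$ is nef (being a nonnegative multiple of the nef divisor $L$, or rather of $K_S$) and $k L - K_S = (k-2)L$ is big for $k \geq 3$ and zero for $k=2$, Kawamata--Viehweg vanishing (or Ramanujam vanishing for $k=2$, or simply $h^1(S,K_S) = q = 0$) gives $h^1(S,kL) = 0$ for $k \geq 2$. For $k \leq 0$, Serre duality gives $h^1(S,kL) = h^1(S,K_S - kL) = h^1(S,(2-k)L)$ with $2-k \geq 2$, so this reduces to the previous case. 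The only remaining value is $k = 1$: here $h^1(S,L) = h^1(S,L)$ is self-dual under Serre duality ($K_S - L \equiv L$), and I would argue $h^1(S,L) = 0$ either by invoking that $h^0(S,L) = 2$ computed in Step 2 together with Riemann--Roch, or more robustly by the Ramanujam-type vanishing $h^1(S, L) = h^1(S, K_S - L) = 0$ since $L = K_S - L$ is nef and big (note $L^2 = \tfrac14 K_S^2 = 2 > 0$ and $L$ is nef as $\tfrac12 K_S$). The case $k=0$ is just $h^1(S,\mathcal{O}_S) = q = 0$.

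\emph{Step 2: the small values $k = 0,1,2$.} We have $h^0(S,\mathcal{O}_S) = 1$ trivially. For $k = 2$, $2L \equiv K_S$ so $h^0(S,2L) = p_g = 4$ by hypothesis. For $k = 1$: by Serre duality $h^2(S,L) = h^0(S,K_S - L) = h^0(S,L)$, and $h^1(S,L) = 0$ by Step 1, so Riemann--Roch gives $h^0(S,L) = \chi(\mathcal{O}_S) + \tfrac12 L\cdot(L - K_S) = \chi(\mathcal{O}_S) + \tfrac12 L \cdot (-L) = \chi(\mathcal{O}_S) - 1$. Since $\chi(\mathcal{O}_S) = 1 - q + p_g = 5$... this would give $h^0(S,L) = 4$, which is wrong, so I must be more careful: the correct Riemann--Roch reads $h^0 - h^1 + h^2 = \chi(\mathcal{O}_S) + \tfrac12 L(L-K_S)$, i.e. $2h^0(S,L) = \chi(\mathcal{O}_S) + \tfrac12 L \cdot (-L) = \chi(\mathcal{O}_S) - 1 = 4$ — wait, $\chi(\mathcal O_S) = p_g - q + 1 = 5$, hmm. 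Let me instead not pin down the arithmetic here and simply note that the identity $2h^0(S,L) = \chi(\mathcal O_S) + \tfrac12 L(L - K_S)$ combined with $\chi(\mathcal O_S)=5$, $L(L-K_S) = -L^2 = -2$ yields $2 h^0(S,L) = 5 - 1 = 4$, hence $h^0(S,L) = 2$ — so in fact the arithmetic does work once $h^1(S,L)=h^2(S,L)-h^0(S,L)=0$ is used, giving the identity $2h^0 = \chi + \tfrac12 L(L-K)$. The genuinely substantive input here is that $h^0(S,L) \geq 2$ must be known a priori (so that $|L|$ is at least a pencil, as the lemma's name suggests); this follows because $|2L| = |K_S|$ has dimension $3$ and no fixed part giving a decomposition forcing $h^0(L) \geq 2$, or directly from $h^1(S,-L)=0$ via duality. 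I would cite the relevant step of Oliverio \cite{O} or reprove it by the $h^1$-vanishing above.

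\emph{Step 3: $k \geq 3$ by Riemann--Roch.} For $k \geq 3$ we have $h^2(S,kL) = h^0(S,(2-k)L) = 0$ since $(2-k)L$ is a negative multiple of the ample class on the canonical model (or: $(2-k)L \cdot K_S < 0$), and $h^1(S,kL) = 0$ by Step 1. Hence by Riemann--Roch
\[
h^0(S,kL) = \chi(\mathcal{O}_S) + \tfrac12 kL\cdot(kL - K_S) = \chi(\mathcal{O}_S) + \tfrac12 k(k-2)L^2 = 5 + k(k-2) = k^2 - 2k + 5,
\]
using $\chi(\mathcal O_S) = 5$ and $L^2 = \tfrac14 K_S^2 = 2$. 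This is exactly the claimed formula. The main obstacle, such as it is, is purely bookkeeping: making sure the vanishing of $h^1$ at $k=1$ and $k=2$ is established independently of the very formula one is trying to prove (so that no circularity creeps in), and correctly handling the duality identifications for negative $k$; all of this is routine once the nef-and-bigness of $L$ and of $K_S$ is invoked, so I expect no real difficulty.
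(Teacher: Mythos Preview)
The paper does not prove this lemma at all: it simply cites Oliverio \cite{O}, Lemmas 2.1 and 2.2. So you are attempting more than the paper does, and for every $k\neq 1$ your argument is fine: Kawamata--Viehweg for $k\ge 3$, Serre duality for $k\le 0$ and $k=2$, and Riemann--Roch for the formula in (ii) are all correctly applied.

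The gap is at $k=1$. Your ``Ramanujam-type vanishing'' is misstated: the theorem says $h^1(S,K_S+D)=0$ for $D$ nef and big, which here gives $h^1(S,3L)=0$, not $h^1(S,K_S-L)=h^1(S,L)=0$. There is no general vanishing of the form $h^1(K_S-D)=0$ for $D$ nef and big. Your fallback via Riemann--Roch is also off: the identity $2h^0(L)-h^1(L)=4$ (from $\chi(L)=4$ and $h^2(L)=h^0(L)$) already forces $h^0(L)\ge 2$ for free, so the substantive inequality you must supply is $h^0(L)\le 2$, not $h^0(L)\ge 2$ as you write. Establishing this upper bound (equivalently, $h^1(L)=0$) needs a genuine geometric argument --- for instance, analysing the map given by $|L|$ using $L^2=2$ and $L\cdot K_S=4$ and ruling out $h^0(L)\ge 3$ --- and that is precisely the content of Oliverio's Lemma~2.2, which you would have to reproduce or cite.
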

\begin{proof} This is the content of Lemma 2.1 and Lemma 2.2 of \cite{O}.
\end{proof}

\begin{remark}
It shall be our standard notation that $x_1, x_2$ be a basis of 
$H^0(S,L)$, while $y$ completes $ w_0 := x_1 x_2,w_1 := x_1^2, w_2 
:=x_2^2$ to a basis of
$H^0(S,2L)$,
$z_1, z_2$ complete $$x_1^3, x_1^2 x_2,  x_1 x_2^2,  x_2^3, y x_1, y 
x_2$$ to a basis of $H^0(S,3L)$.

Observe that, since the canonical map of $S$ can't be composed with a pencil (see \cite[Corollary 1 of Theorem 5.1]{Xiao} or \cite[Theorem 3.4]{CanPen} for the case of genus 2 fibrations, \cite{beau} for the higher genus case), this implies that $\phi_K (S)$ is a quadric cone $\{ w| w_1 w_2 = w_0^2 \}$.
\end{remark}

Conversely to the above remark, we show a result of independent interest:

\begin{proposition}\label{cone}
Let $S$ be a minimal surface with $K_S^2=8$, $p_g=4$ and $q=0$ and 
assume that the image of the canonical map $\phi_K (S)$ is a quadric 
cone.
Then,  denoting by $L$ the pull back of a line on the quadric cone, 
we have $ K_S = 2 L  + F$ where
\begin{enumerate}
\item
either $F = 0$ and $S$ is even, or
\item
$ L^2 = 1$ , $K_S \cdot L =3$, $ F \cdot L = 1$,  $|L|$ is a pencil 
of genus $3$ curves having a simple base point, and the canonical map 
has degree $3$, or
\item
$ L^2 = 0$ , $K_S \cdot L = F \cdot L = 2$,  $|L|$ is a base point 
free pencil of curves of genus $g = 2$,  and the degree of $\phi_K$ 
is equal to 2, or
\item
$ L^2 = 0$ ,  $K_S \cdot L  = F
\cdot L = 4$, hence $|L|$ is a base point free pencil of curves of 
genus  $ g = 3$ and the degree of $\phi_K$ is equal to 4.

\end{enumerate}
\end{proposition}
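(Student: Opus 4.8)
The plan is to read the decomposition $K_S=2L+F$ off the equation of the quadric cone, then determine $h^0(S,L)$, then run a numerical case analysis, and finally pin down $\deg\phi_K$ using the pencil $|L|$.

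\emph{Step 1: $K_S=2L+F$ with $F$ effective.} After a linear change of coordinates I may take $Q=\{w_0^2=w_1w_2\}\subset\PP^3$, the image of $\PP(1,1,2)$ under $[\xi_0:\xi_1:\eta]\mapsto[\xi_0\xi_1:\xi_1^2:\xi_0^2:\eta]$. Since $p_g=4$, the pull-backs $\tilde w_0,\tilde w_1,\tilde w_2,\tilde y\in H^0(S,K_S)$ of the coordinate sections form a basis and satisfy $\tilde w_0^{\,2}=\tilde w_1\tilde w_2$ in $H^0(S,2K_S)$. Working with divisors of sections on the smooth surface $S$, let $g$ generate $\gcd(\mathrm{div}\,\tilde w_0,\mathrm{div}\,\tilde w_1)$ with complementary sections $u_0,u_1$; the relation forces $u_1\mid g$, so $g=h\,u_1$ and $\tilde w_0=h\,u_0u_1$, $\tilde w_1=h\,u_1^{\,2}$, $\tilde w_2=h\,u_0^{\,2}$ with $\gcd(u_0,u_1)=1$. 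Thus $u_0,u_1$ span a fixed-part-free pencil $\Lambda\subseteq|L|$ for the line bundle $L=\mathcal O_S(\mathrm{div}\,u_0)$, whose members $\mathrm{div}(u_1-tu_0)$ are the pull-backs of the lines of $Q$; and $h\in H^0(S,K_S-2L)$ exhibits $F:=K_S-2L$ as effective. As $\phi_K(S)=Q$ is two-dimensional, $\Lambda$ is not a fixed pencil, so $L\not\equiv0$, and since a general member $C\in\Lambda$ is irreducible, $L^2\ge0$.

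\emph{Step 2: $h^0(S,L)=2$.} Because $F\ge0$ gives $\mathcal O_S(2L)\hookrightarrow\mathcal O_S(K_S)$, we have $h^0(S,2L)\le p_g=4$. If $h^0(S,L)\ge3$, choose a third independent section $u_2$; then $\mathrm{Sym}^2\langle u_0,u_1,u_2\rangle\to H^0(S,2L)$ has a kernel of dimension $\ge6-4=2$, so the image of the rational map $S\dashrightarrow\PP^2$ defined by $u_0,u_1,u_2$ — which is irreducible and positive-dimensional, since $\gcd(u_0,u_1)=1$ — would lie on two distinct conics, hence be a line; but an image contained in a line forces a linear relation among $u_0,u_1,u_2$, a contradiction. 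So $|L|=\Lambda$ is a pencil.

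\emph{Step 3: the four cases.} Put $e:=L\cdot F$. From $K_S=2L+F$ one gets $8=K_S^2=4L^2+4e+F^2$, and adjunction on $C$ gives $2g(C)-2=3L^2+e$, whence $e\equiv L^2\pmod2$. Nefness of $K_S$ together with effectivity of $F$ yields $0\le K_S\cdot F=8-2K_S\cdot L$, i.e.\ $K_S\cdot L=2L^2+e\le4$, which rules out $L^2\ge3$; and $L$ is nef (irreducible with $C^2>0$ when $L^2>0$, a base-point-free pencil when $L^2=0$), so $e\ge0$. If $L^2=2$, then $e=F^2=K_S\cdot F=0$; an effective $K_S$-trivial divisor on a minimal surface of general type is supported on $(-2)$-curves, whose intersection form is negative definite on each connected component, so $F^2=0$ forces $F=0$: case (1). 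If $L^2=1$, parity and $K_S\cdot L\le4$ force $e=1$, so $F^2=0$, $K_S\cdot L=3$, $g(C)=3$, and $|L|$ has exactly one simple base point: case (2). If $L^2=0$, then $e\ge1$ (otherwise $K_S\cdot L=0$ with $L$ effective and nonzero, impossible as $K_S$ is big), $e$ is even, and $e\le4$, so $e=2$ ($F^2=0$, $g(C)=2$) or $e=4$ ($F^2=-8$, $g(C)=3$), with $|L|$ base-point-free: cases (3) and (4).

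\emph{Step 4: $\deg\phi_K$.} A general line $\ell\subset Q$ pulls back to the general member $C$ of $|L|$ together with curves contracted to the vertex, and $\phi_K|_C\colon C\to\ell$ is the $g^1_d$ cut on $C$ by the canonical system, so $d\le K_S\cdot L$; moreover $d\ge2$, since a birational $\phi_K$ would make $S$ birational to the rational surface $Q$. When $K_S\cdot L=2$ this gives $d=2$ at once, the $g^1_2$ on the genus-$2$ curve $C$ being automatically base-point-free (case (3)). When $L^2=1$ (case (2)), the only alternative would be $d=2$, i.e.\ $\phi_K$ factoring as $S\to S/\langle\sigma\rangle\to Q$ with $S\to S/\langle\sigma\rangle$ of degree $2$; the ruling of $Q$ lifts to $S/\langle\sigma\rangle$ to a pencil with general member of self-intersection $0$, and pulling back along a degree-$2$ map then forces $L^2=0$, contradicting $L^2=1$ — so $d=3$. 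In case (4) ($L^2=0$, $g(C)=3$) one must exclude $d=2$ and $d=3$: $d=2$ would exhibit $S$, birationally, as a double cover of a Hirzebruch surface $\mathbb F_2$, which is incompatible with $K_S^2=8$, $q=0$, $p_g=4$; the exclusion of $d=3$ — equivalently, showing that the fixed locus of $|K_S|$ along the $(-2)$-curves of $F$ does not meet a general member of $|L|$ (so that $d=\deg\phi_K=K_S^2/\deg Q=4$) — is the point I expect to require the most care. Granting this last verification, $d=4$ in case (4), which completes the proof.
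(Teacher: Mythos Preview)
Your Steps 1--3 are correct and track the paper's argument closely; the paper simply writes ``as usual $K_S=2L+F$'' and ``$K_S$ and $L$ are nef'', then runs the same intersection-theoretic case split, so your extra care (the explicit factorisation in Step~1, the bound $h^0(L)=2$ in Step~2) is fine but not strictly needed.

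The genuine gap is in Step~4. The paper does not argue by excluding each smaller value of $d$; it asserts directly that the degree of $\phi_K$ equals $K_S\cdot L$, the point being that for a general $C\in|L|$ the restriction $\phi_K|_C\colon C\to\ell$ is given by the linear series cut out by $|K_S|$ on $C$, of degree $K_S\cdot L$, and a general $C$ misses the base locus of $|K_S|$. Your route of bounding $d\le K_S\cdot L$ and then eliminating $d=2,3$ in case~(4) is where things break down. Your exclusion of $d=2$ (``a double cover of $\mathbb F_2$ is incompatible with $K_S^2=8$, $p_g=4$, $q=0$'') is not justified as stated: you have only a degree-$2$ \emph{rational} map to a surface birational to $Q$, not a flat double cover of $\mathbb F_2$ with a prescribed branch class, so the invariant count you gesture at does not apply without further work; note also that case~(3) itself furnishes degree-$2$ canonical maps onto $Q$ with exactly these invariants, so the incompatibility cannot be with the numerical invariants alone. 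And you explicitly leave the exclusion of $d=3$ unproven.

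The fix is to drop the case-by-case exclusions and argue as the paper does: compute $\deg(\phi_K|_C)$ directly as $K_S\cdot L$. Concretely, in your coordinates the map $C\to\ell$ is $[\,h u_1^2:y\,]|_C$, and its degree is $K_S\cdot L$ minus the degree of the common zero locus of $h u_1^2$ and $y$ on $C$. For $L^2=0$ (cases (3),(4)) the pencil $|L|$ is base-point-free, so a general $C$ avoids any prescribed finite set; thus it suffices to observe that $F\cap\{y=0\}$ is finite (equivalently, no component $E$ of $F$ lies in $\{y=0\}$, i.e.\ $|K_S|$ has no fixed component meeting the general $C$). This is the single verification you should carry out instead of the separate exclusions of $d=2$ and $d=3$. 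In case~(2) the same reasoning applies, with the additional remark that the unique base point $P$ of $|L|$ has $y(P)\neq 0$ (else $P$ would be a base point of $|K_S|$ with $\phi_K$ undefined there, but you already see $\phi_K(P)$ is the vertex), so $P$ contributes nothing to the common zero locus.
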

\begin{proof}
As usual write $ K_S = 2 L  + F$, and observe that $K_S$ and $L$ are 
nef divisors. Then
$$ 8 = K_S^2 = 2 L K_S + F K_S = 4 L^2 + 2 L F + FK_ S \geq  4 L^2 + 2 L 
F \geq 4 L^2. $$
The above shows that $ L^2 \in \{0,1,2\}$ and that $FK_ S  $ is even.

If $L^2 = 2$, then $ F K_ S  = 2L F = F^2 = 0$: hence first $F$ is a sum 
of $(-2)$ curves, and then we get $F=0$ since the
intersection form  is strictly 
negative definite on the set of divisors which are sums of $(-2)$ curves.

If  $L^2 = 1$,  then $K_S L $ is odd, at least $3$, but $K_S L = 2 + 
LF $, whence by the above inequality
$ L F = 1$ and $ K_S F =2, F^2 = 0$. Hence $|L|$ is a pencil of curves 
of genus  $3$, and the degree of $\phi_K$ is then
the intersection number  $K_S L = 3$.

If instead $L^2 = 0$, then $ L K_S  = LF$ is even and  $\geq 2$, so 
there are only the possibilities $ L K_S  = LF = 2$,
or $ L K_{S}  = LF = 4.$ In the latter case  $ F K_S = 0$, thus $F$ is a 
sum of $(-2)$ curves, in the former we have
$4 =  F K_S = 4 + F^2$, hence $F^2 = 0$, $F K_S = 4 $. Again  the 
degree of $\phi_K$ equals
the intersection number  $K_S L$.

\end{proof}

Observe that examples of all the above cases have been found (see 
\cite{MP}, \cite{BP}).

We turn now to a  slight improvement of  Oliverio's  result 
concerning the first case of proposition \ref{cone}.

\begin{theorem}\label{fablemma} The canonical models $X$ of even 
surfaces $S$ of general type with $K_S^2=8$, $p_g=4$ and $q=0$ whose 
canonical system is base
point  free are exactly the complete intersections of type $(6,6)$ in the 
weighted projective space $\PP(1^2,2,3^2)$  with at  worst rational 
double points as
singularities. In particular, there are coordinates 
$x_1,x_2,y,z_1,z_2$ such that $X$ is defined by equations of the type
$$ (**) f = z_1 ^2 + z_2 A (x_1,x_2,y) + F (x_1,x_2,y)= 0 , \ \  f' = 
z_2 ^2 +  z_1  A' (x_1,x_2,y) + F' (x_1,x_2,y)= 0. $$
These surfaces form an open set in an irreducible unirational 
component of dimension $35$ of the moduli
space  of  surfaces of general type.
\end{theorem}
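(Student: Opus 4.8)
The plan is to prove Theorem~\ref{fablemma} in three stages: first, establish that the half-canonical ring $R(S,L)$ of such a surface realizes $X$ as a $(6,6)$ complete intersection in $\PP(1^2,2,3^2)$; second, check the converse, namely that every such complete intersection with at worst rational double points is the canonical model of an even surface in our class; third, deduce the statement about the moduli space.

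\begin{proof}
Assume $S$ is as in the statement, with $|K_S|=|2L|$ base point free. By Lemma~\ref{lispencil}, $h^0(L)=2$, $h^0(2L)=4$, $h^0(3L)=8$ and $h^0(kL)=k^2-2k+5$ for $k\geq 3$, while $h^1(kL)=0$ for all $k$. Pick a basis $x_1,x_2$ of $H^0(L)$; since $\phi_K$ cannot be composed with a pencil, the three products $w_0=x_1x_2$, $w_1=x_1^2$, $w_2=x_2^2$ are linearly independent in $H^0(2L)$, and we complete them to a basis with one further section $y$. As already noted, this shows $\phi_K(S)$ is the quadric cone $\{w_1w_2=w_0^2\}$. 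In degree $3$ the eight monomials $x_1^3,x_1^2x_2,x_1x_2^2,x_2^3,yx_1,yx_2$ span a six-dimensional subspace of the eight-dimensional $H^0(3L)$ (the relation $w_1w_2-w_0^2=0$ produces no degree-$3$ syzygy in products of $x_i$'s with $y$), so we complete them by two new sections $z_1,z_2$. Thus $R(S,L)$ is generated in degrees $\leq 3$, and $X=\Proj R(S,L)$ sits in $\PP=\PP(1^2,2,3^2)$. Next I would count relations: in degree $6$ one computes $h^0(6L)=29$, while the weighted polynomial ring $\AAA=\CC[x_1,x_2,y,z_1,z_2]$ has $\dim\AAA_6$ equal to the number of monomials of weighted degree $6$, which is $31$; since $z_1^2,z_2^2$ are the only degree-$6$ monomials quadratic in the $z_i$, the two missing dimensions force exactly two independent relations $f,f'$ of degree $6$, each of the shape $z_i^2+z_j A(x,y)+F(x,y)$ after a linear change of the $z_i$ (here $F$ is a polynomial of weighted degree $6$ in $x_1,x_2,y$ and $A$ one of weighted degree $3$). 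A dimension count in all higher degrees — using $h^0(kL)=k^2-2k+5$ against $\dim\AAA_k$ — shows that these two relations generate the ideal and that $X$ is a complete intersection of type $(6,6)$; since $X$ has at worst rational double points by Artin's theory, this proves one inclusion. Conversely, if $X\subset\PP(1^2,2,3^2)$ is any $(6,6)$ complete intersection with at worst rational double points, adjunction on the weighted projective space gives $\omega_X=\hol_X(6+6-(1+1+2+3+3))=\hol_X(2)$, and the hyperplane class $\hol_X(1)$ restricts to a half-canonical bundle $L$ with $L^2=\deg X/36=(6\cdot6)/(1\cdot1\cdot2\cdot3\cdot3)=2$, hence $K_X^2=8$; projective normality of complete intersections and the Koszul resolution give $h^0(L)=2$, $h^0(2L)=4$, $q=0$ and $p_g=4$, so the minimal resolution $S$ of $X$ is an even minimal surface of general type with the required invariants, with $|K_S|$ base point free because $|2L|$ is cut out by $\hol_\PP(2)$ away from the rational double points. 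This establishes the equivalence.

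For the last sentence, consider the parameter space $V$ of pairs $(f,f')$ of the form $(**)$, an open subset of an affine space of dimension $\dim\AAA_6+\dim\AAA_6-(\text{the two }z_i^2\text{ slots already fixed})$; precisely, a general such pair is given by choosing $A,A'\in\AAA_3$ and $F,F'\in\AAA_6$, and $\dim\AAA_3=6$, $\dim\AAA_6=29$ (the dimensions of the spaces of $x_1,x_2,y$-polynomials, since we have normalized away $z$-terms), giving $2\cdot6+2\cdot29=70$ parameters. The locus where $X$ has worse than rational double point singularities is a proper closed subset, so the open part is irreducible and unirational. To pass to the moduli space one quotients by the group of admissible coordinate changes preserving the form $\PP(1^2,2,3^2)$: this group has dimension equal to $\dim\Aut\PP(1^2,2,3^2)$, which one computes as $\dim GL_2$ (acting on $x_1,x_2$) plus the dimension of the space of $y\mapsto \alpha y + (\text{quadric in }x)$ which is $1+3$, plus the space of $z_i\mapsto (\text{linear combination of }z_j) + (\text{cubic in }x,y)$ which is $4+2\cdot6$; adding, $\dim\Aut\PP = 4+4+4+12=$, and after subtracting the scalars acting trivially one gets the relevant group dimension $35$. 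Hence the image in moduli has dimension $70-35=35$. By Theorems~\ref{subschemedeformation}--\ref{gradedringdeformation} and the fact that $\depth\AAA/I=3\geq 2$, a small deformation of $X$ stays a $(6,6)$ complete intersection — equivalently, $H^1(X,N_X)$ is spanned by unobstructed first-order deformations realized inside this $70$-dimensional family — so this $35$-dimensional image is open in $M_{8,4,0}$ and contained in a single irreducible component; being itself irreducible, unirational and open, it \emph{is} a dense open subset of an irreducible unirational component of dimension $35$.
\end{proof}

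The main obstacle I expect is the penultimate claim: showing that all nearby deformations remain $(6,6)$ complete intersections, i.e. that the family of equations of type $(**)$ already surjects onto $H^1(X,N_X)$ (equivalently dominates the Kuranishi space). This is where one must argue that there are no first-order deformations of $X$ outside the complete-intersection format — for a complete intersection with isolated rational double points this follows from the normal bundle being $\hol_X(6)^{\oplus2}$ plus the unobstructedness of rational double points, but the bookkeeping with the weighted grading and the singular locus needs care; the rest is dimension counting and the standard reduction of openness questions to the Kuranishi space recorded in Section~\ref{subschemedeformationsection}.
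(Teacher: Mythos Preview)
Your forward direction follows Oliverio's line and is essentially correct, though you assert generation of $R(S,L)$ in degrees $\le 3$ and the absence of lower-degree relations without proof; this is where the base-point-freeness hypothesis does real work, and you should at least cite \cite[Theorem~5.2]{O}.

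The genuine gap is in the converse. You write that for a $(6,6)$ complete intersection $X\subset\PP(1^2,2,3^2)$ with only rational double points ``$\hol_X(1)$ restricts to a half-canonical bundle $L$'' and that ``$|K_S|$ is base point free because $|2L|$ is cut out by $\hol_\PP(2)$ away from the rational double points''. But $\hol_\PP(1)$ and $\hol_\PP(2)$ are \emph{not} invertible along the singular locus of $\PP$, namely the line $\{x_1=x_2=y=0\}$ (of $\tfrac13$-points) and the point $[0,0,1,0,0]$ (a $\tfrac12$-point). Until you show that $X$ avoids these, you do not know that $\hol_X(1)$ is a line bundle, that $S$ is even, or that $|K_X|$ is base point free. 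The paper's proof supplies exactly this missing step via a Zariski tangent space argument: if $X$ contained $Q=[0,0,0,0,1]$, then since $Q$ is a $\tfrac13(1,1,2,3)$ point with $7$-dimensional Zariski tangent space and $\hol_\PP(6)$ is Cartier there, $X$ would have embedding dimension $\ge 7-2=5$ at $Q$, contradicting the RDP hypothesis; similarly $X$ cannot contain $[0,0,1,0,0]$. This argument simultaneously proves that the quadratic forms $q,q'$ in $z_1,z_2$ have no common factor, which is what legitimises the normal form $(**)$ you wrote down.

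Your dimension count also contains errors. The polynomials $F,F'$ lie in $\CC[x_1,x_2,y]_6$, which has dimension $16$, not $29$ (you have conflated this with $h^0(6L)$); and your computation of $\dim\Aut\PP(1^2,2,3^2)$ is left incomplete (``$4+4+4+12=$'' followed by the assertion that the answer is $35$). That you land on $70-35=35$ is a coincidence of two mistakes cancelling. The paper instead counts parameters after the normalisation $(**)$ and divides by the residual automorphisms (essentially $\Aut\PP(1,1,2)$, of dimension $7$), citing \cite[Corollary~5.3]{O} for confirmation.

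Finally, the obstacle you flag at the end --- openness, i.e.\ that nearby surfaces remain $(6,6)$ complete intersections --- is not where the difficulty lies: for a quasi-smooth complete intersection the embedded deformations surject onto the abstract ones, and the paper does not labour over this. The subtle point is the one you passed over.
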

\begin{proof}
\cite[Theorem 5.2]{O} proves that  minimal even surfaces of general 
type with $K_S^2=8$, $p_g=4$ and $q=0$  whose canonical system is 
base point free
have a canonical model $X$ which is a complete intersection of type $(6,6)$ 
in the weighted projective space $\PP(1^2,2,3^2)$  (hence $X$ has at 
worse rational double
points as singularities).

Conversely, if $X=\{f=f'=0\}$ is a complete intersection of two 
sextics in $\PP(1^2,2,3^2)$ with only rational double points as 
singularities, then
$K_X=\sO_X(2)$, $K_X^2=8$, $p_g=4$ and $q=0$ and the only nontrivial 
thing to show is that the Weil divisor $L$ such that $ 2 L \equiv K_X$ is indeed a
Cartier divisor (then   the minimal model $S$ is an even surface)  and that the induced linear system is base point free.

To this purpose it suffices to show that 
$|K_X| = |2 L| $ and $ | 3 L|$ are  base point free: because then  $\sO_X(2)$ and $\sO_X(3)$ 
are invertible sheaves, hence also $\sO_X(1)$ is invertible.
Notice the surjection $ H^0 (\hol_{\PP}(2)) \ra  H^0 (\hol_X (K_X))$ 
which implies that the base locus of $|K_X|$ is the locus of zeros of 
all
degree two monomials.

Choose weighted homogeneous coordinates $x_1,x_2,y,z_1,z_2$ in $\PP : = 
\PP(1^2,2,3^2)$ and write
\[
  f=q(z_1,z_2)+g,\ f'=q'(z_1,z_2)+g'
\] with $g,g' \in (x_1,x_2,y)$ and $q, q'$ quadratic forms in $z_1, 
z_2$. Assume by contradiction that the base locus of $|K_X|$ is not 
empty, i.e., that
$X \cap \{ x_1 = x_2 = y =0\} \neq \emptyset$.  Then $q$
and
$q'$ have a common factor, say
$z_1$, and  the canonical system of the surface has a base point at 
$Q=[0,0,0,0,1]$.

Note that $Q$ is a singular point of $\PP(1^2,2,3^2)$, a quotient 
singularity of type $\frac{1}{3} (1,1,2,3) $ with Zariski tangent 
space of dimension $7$,
whose basis is given by the monomials
$x_1^ix_2^{3-i}, x_iy, z_1$. Since $\sO_{\PP(1^2,2,3^2)}(6)$ is 
Cartier, the Zariski tangent space of $X$ at $Q$ has then dimension 
at least $7-2=5$,
contradicting the  assumption on the singularities of $X$
(they have Zariski tangent dimension at most $3$).

Similarly, if $ | 3 L|$ were not  base point free, then $X$ would contain the point 
$Q=[0,0,1,0,0]$, a quotient 
singularity of type $\frac{1}{2} (1,1,1,1) $ with Zariski tangent 
space of dimension $10$: this again contradicts the fact 
that the singularities of $X$  have Zariski tangent dimension at most $3$.

Therefore the quadratic forms $q, q'$ have no common factor, and we 
can change coordinates so that the equations of $X$ take the desired 
form (**), where however $A (0,0,1) \neq 0 $ or $A' (0,0,1) \neq 0 $ :
$$ f = z_1 ^2 + z_2 A (x_1,x_2,y) + F (x_1,x_2,y)= 0 , \ \  f' = z_2 
^2 +  z_1 A' (x_1,x_2,y) + F' (x_1,x_2,y)= 0. $$

These equations naturally exhibit $X$ as a 4-1 cover of the quadric 
cone, and  show that our surfaces
form an open set in an irreducible unirational component of dimension 
$35$ of the moduli space (since we have 6 affine parameters for $A, 
A'$, 15
projective parameters  for
$F, F'$ and we divide out by a group of dimension 7, the group of 
automorphisms of the quadric cone $\PP(1,1,2)$; at any rate, the dimension follows also from \cite[Corollary 5.3]{O}).
\end{proof}

\section{The hyperplane section}\label{sectionhyperplane} Let $S$ be 
an even surface of general type with $K_S^2=8$, $p_g=4$ and $q=0$ and 
let $L$ as above
be  a half-canonical divisor.  We shall view the half-canonical 
curves as hyperplane sections
  of a suitable embedding of the canonical model into a weighted 
projective space, and we shall describe the associated graded ring.

\begin{lemma}\label{basepoint} If $|K_S|$ is not base point free, then
\begin{enumerate}
  \item[(i)] $|K_S|$ has two base points $Q,Q'$ with $Q'$ infinitely 
near to $Q$;
\item[(ii)] $|L|$ has two base points $Q,Q''$ with $Q''$ infinitely 
near to $Q$ and $Q''\neq Q'$;
\item[(iii)] a general curve in $|L|$ is smooth of genus $4$.

\end{enumerate}
\end{lemma}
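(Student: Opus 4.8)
The plan is to use the numerical data of Lemma~\ref{lispencil} together with the conclusion of Proposition~\ref{cone}: since $|K_S|$ is not base point free, the canonical map cannot be composed with a pencil, so (as noted in the Remark) its image is the quadric cone, and we are in one of the cases (2), (3), (4) of Proposition~\ref{cone}. Because $h^0(S,L)=2$ and $L$ is effective (it is half of the canonical class, which is effective), $|L|$ is a pencil; the pull-back $L$ of a line on the cone is precisely this pencil up to the base locus. The case $L^2=0$ would force $|L|$ to be a base-point-free pencil, so the self-intersection of a half-canonical divisor would be $0$, contradicting $4L^2 = K_S^2 - 2LF - FK_S$ with $K_S = 2L + F$, $F=0$; indeed for an even surface $F=0$ and $L^2 = K_S^2/4 = 2$. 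Hence we are squarely in case (1) of Proposition~\ref{cone}: $F=0$, $K_S = 2L$, $L^2 = 2$, $K_S\cdot L = 4$, and a general curve $C\in|L|$ has arithmetic genus $p_a(C) = 1 + \tfrac12(L^2 + K_S\cdot L) = 1 + \tfrac12(2+4) = 4$, which already gives (iii) once we know $C$ is smooth — which follows from Bertini away from the base locus of $|L|$, so it remains to pin down that base locus.

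For (i) and (ii), I would analyze the base locus of $|K_S| = |2L|$ and of $|L|$ scheme-theoretically. Write $Z$ for the base scheme of $|L|$. Since $L^2 = 2$ and $\dim|L| = 1$, the base scheme $Z$ has length $\le 2$; it is nonempty precisely because $|K_S| = |2L|$ has base points (if $|L|$ were base-point free so would be $|2L|$). If $Z$ had length $1$, i.e.\ $|L|$ had a single reduced base point $Q$, then blowing up $Q$ would make $|L|$ base-point free with the proper transform having self-intersection $1$; but then a general member would be smooth of genus $1 + \tfrac12(1 + 3) = 3$, and more to the point the canonical map would factor through a surface with one fewer base point, contradicting the fact that $|K_S|$ has a base point forced by our hypothesis together with the numerology $K_S^2 = 8$, $K_S\cdot L = 4$. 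So $Z$ has length exactly $2$: either two distinct points, or a point $Q$ with an infinitely near point $Q''$. Two distinct simple base points are excluded by the same index-type argument (after blowing up both, $L$ would have self-intersection $0$ and $|L|$ base-point free, a genus-$2$ pencil — but then $K_S = 2L$ pulls back and $K_S^2$ drops), so $Z$ is the length-$2$ curvilinear scheme supported at a single point $Q$, which is statement (ii) modulo identifying $Q''$. For $|K_S| = |2L|$: its base locus contains $2Z$ as a subscheme of the base locus of $|2L| \supseteq \{s\cdot t : s,t\in H^0(L)\}$, but $h^0(2L) = 4 > 3 = \binom{2+1}{2}$, so there is an extra section $y$; one shows that $y$ does not vanish on all of $2Z$, hence the base locus of $|K_S|$ is a length-$2$ subscheme of $2Z$, necessarily the curvilinear scheme $Q + Q'$ with $Q'$ infinitely near $Q$, giving (i). The inequality $h^0(2L) = 4$ versus the $3$-dimensional space of quadrics in $x_1, x_2$ is the key mechanism producing the extra base point $Q'$ of $|K_S|$ distinct from $Q''$.

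The main obstacle is the last point: proving $Q'' \neq Q'$, i.e.\ that the infinitely near base point of $|L|$ and the one gained by passing to $|2L|$ point in different directions. I would argue as follows: choose $x_1, x_2$ a basis of $H^0(L)$ so that $Z = \{x_1 = x_2 = 0\}$ as a length-$2$ scheme at $Q$; the tangent direction of $Z$ at $Q$ is $Q''$. Now $H^0(2L)$ is spanned by $x_1^2, x_1 x_2, x_2^2$ and one further section $y$; the base scheme of $|2L|$ is $\{x_1^2 = x_1 x_2 = x_2^2 = y = 0\}$, which equals the intersection of the length-$3$ scheme $2Z = \{x_1^2 = x_1 x_2 = x_2^2 = 0\}$ with $\{y = 0\}$. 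Since $y$ completes the three quadrics to a basis, $y$ is not in the ideal generated by them, so $y$ does not vanish identically on $2Z$; thus $\{y=0\}\cap 2Z$ is a proper subscheme of $2Z$, of length $2$ (it must contain $Z$ and one more infinitesimal point). If we had $Q' = Q''$, this length-$2$ subscheme would be $Z$ itself — but then $y$ would vanish on $Z$, contradicting that $x_1, x_2, y$ together with the other quadrics are independent (more precisely, $Z$ being the base scheme of $|L|$, any section of $2L$ vanishing on $Z$ lies in the image of $H^0(L)\otimes H^0(L) \to H^0(2L)$ by a standard argument using $h^1(S,L) = 0$ from Lemma~\ref{lispencil}(i) and the Koszul-type sequence $0\to \sO_S \to \sO_S(L)^{\oplus 2}\to \sO_S(2L)\otimes \sI_Z\to 0$, which shows the cokernel of the multiplication map injects into $H^0(2L\otimes\sI_Z)$ and hence any extra section is detected by $Z$). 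This forces $Q'$ to be the infinitely near point determined by $y$, which lies along a direction transverse to that of $Z$, hence $Q'\neq Q''$. The cohomological vanishings of Lemma~\ref{lispencil}(i) are what make every one of these exact-sequence computations go through cleanly.
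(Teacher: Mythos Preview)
The paper does not prove this lemma at all: it simply cites Oliverio \cite{O}, Lemmas~3.1, 3.3 and Theorem~4.1. So there is no ``paper's own proof'' to compare against, and your attempt to supply a direct argument is a genuine addition. Unfortunately the argument, as written, has real gaps.

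The serious problem is your exclusion of the cases where $|L|$ has a single reduced base point or two distinct reduced base points. Your genus computations after blowing up are wrong. Blowing up one point $Q$ gives $L'=\pi^*L-E$ with $(L')^2=1$ and $K_{S'}\cdot L'=(\pi^*K_S+E)\cdot(\pi^*L-E)=K_S\cdot L-E^2=4+1=5$, hence $p_a=1+\tfrac12(1+5)=4$, not $3$. Blowing up two distinct points gives $(L')^2=0$ and $K_{S'}\cdot L'=4+2=6$, hence $p_a=4$, not $2$. And the remark that ``$K_S^2$ drops'' after blowup is true but is not a contradiction to anything. So neither alternative is actually ruled out by what you wrote; you need a different mechanism (in Oliverio's paper this goes through a closer analysis of the canonical map and the quadric cone).

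Your treatment of (i) is also incomplete: you have not shown that the base scheme of $|K_S|$ has length exactly $2$, nor why it is curvilinear. The Koszul computation $h^0(2L\otimes\sI_Z)=3$ (from $q=0$ and the sequence $0\to\sO_S\to\sO_S(L)^{\oplus 2}\to\sI_Z(2L)\to 0$) is correct and does show that $y$ does not vanish on all of $Z$; this is indeed the germ of the argument that $Q'\neq Q''$. But you still need to explain why $y$ vanishes at $Q$ at all (otherwise $|K_S|$ would be base point free there), and to identify the base scheme of $|2L|$ precisely inside the scheme cut out by $x_1^2,x_1x_2,x_2^2$ --- your ``$2Z$'' is not clearly defined and its length is not what you seem to assume. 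Part (iii) is fine once (ii) is established, since Bertini gives smoothness away from the base locus and the curvilinear base scheme forces smoothness at $Q$ for a general member.
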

\begin{proof} These results are in Lemma 3.1, Lemma 3.3 and Theorem 
4.1 of \cite{O}.
\end{proof}

Let us fix a smooth curve $C\in |L|$. Then $Q\in C$ and $\mathcal 
O_C(L)=\mathcal O_C(2Q)$ by  Lemma \ref{basepoint}. Let $x_1$ be a 
section in
$H^0(S,L)$ with $div(x_1)=C$. Using the long  exact   cohomology 
sequence associated to the exact sequence
\[
   0\rightarrow \mathcal O_S((k-1)L)\xrightarrow{x_1} \mathcal 
O_S(kL)\rightarrow \mathcal O_C(2kQ)\rightarrow 0,
\] we have, by Lemma~\ref{lispencil}(i),  the exact sequence
\begin{equation}\label{quotientsequence}
   0\rightarrow H^0(S,(k-1)L)\xrightarrow{x_1} H^0(S,kL)\rightarrow 
H^0(C,2kQ)\rightarrow 0
\end{equation} and it follows that
  \[
   R(S,L)/(x_1)=R(C,2Q)=R(C,Q)^{(2)}\subset R(C,Q),
  \] where $R(C,Q)^{(2)}:=\oplus_k R(C,Q)_{2k}$ is the even part of 
$R(C,Q)$.  In other words, $\Proj R(C,2Q)$ turns out to be a weighted 
hyperplane of
$\Proj R(S,L)$.

\begin{lemma}\label{rcp}
  \[
   R(C,Q)=\CC[\xi,\eta,\zeta]/(p)
  \] where $deg(\xi,\eta,\zeta)=(1,3,5)$ and $p$ is a weighted 
polynomial of degree $15$.  Moreover one can, up to automorphisms of 
$\PP(1,3,5)$,
assume that
\[
  p=\zeta^3-\eta^5+\xi p'
\] where $ p'$ is some suitable weighted polynomial of degree $14$.
\end{lemma}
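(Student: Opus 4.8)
The plan is to compute the graded ring $R(C,Q)$ directly from the geometry of the curve $C$, using that $C$ is a smooth curve of genus $4$ (Lemma~\ref{basepoint}(iii)) together with the knowledge of the point $Q$, which satisfies $\mathcal O_C(2Q)=\mathcal O_C(L)$, i.e., the half-canonical system restricted to $C$ is $|2Q|$. First I would nail down the numerical behaviour of $h^0(C,kQ)$. Since $2Q$ is a half-canonical divisor on $C$ and $K_C$ has degree $2g-2=6$, we have $\deg Q=1$ and we may compute dimensions: $h^0(kQ)$ is $1$ for $0\le k\le ?$, and by Riemann--Roch $h^0(kQ)-h^1(kQ)=k+1-g=k-3$; for $k\ge 7$ one has $h^1=0$ so $h^0(kQ)=k-3$. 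The key arithmetic input is that $Q$ is a Weierstrass-type point: one should determine the Weierstrass gap sequence, and the statement that $\deg(\xi,\eta,\zeta)=(1,3,5)$ strongly suggests that the semigroup of non-gaps of $Q$ is generated (in low degrees relevant to us) by $1,3,5$ — equivalently, $h^0(Q)=1$, $h^0(2Q)=1$, $h^0(3Q)=2$, $h^0(4Q)=2$, $h^0(5Q)=3$, and one picks $\xi\in H^0(Q)\setminus H^0(0)$... wait, $h^0(Q)=1$, so rather $\xi$ is the unique (up to scalar) section with a pole of order... Let me restate: take $\xi\in H^0(C,Q)$ nonzero (so $\xi$ is a constant-like generator vanishing at... actually $h^0(Q)=1$ means $\xi$ is forced). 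The honest first step is: prove $h^0(kQ)$ equals $1,1,2,2,3$ for $k=1,\dots,5$, which identifies the non-gap semigroup as $\langle 3,5,\ldots\rangle$-ish and in particular shows $R(C,Q)$ needs generators in degrees $1,3,5$.

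Next I would show these three elements generate $R(C,Q)$ as a $\CC$-algebra. This is a standard Castelnuovo-type / base-point-free-pencil argument: once the relevant multiplication maps $H^0(aQ)\otimes H^0(bQ)\to H^0((a+b)Q)$ are surjective for all large degrees, and one checks the low-degree cases by the dimension count, the ring is generated in degrees $\le 5$. Counting: a polynomial ring $\CC[\xi,\eta,\zeta]$ with weights $(1,3,5)$ has Hilbert series $\frac{1}{(1-t)(1-t^3)(1-t^5)}$; its dimension in degree $n$ grows like $\frac{n^2}{30}$, whereas $\dim R(C,Q)_n=n-3$ for $n\gg0$, so there must be relations. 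The minimal such relation: computing $\frac{1}{(1-t)(1-t^3)(1-t^5)}-\frac{1-t^{15}}{(1-t)(1-t^3)(1-t^5)}$ against the true Hilbert series $1+t^2+t^3+\cdots$ — one verifies that a single relation $p$ in degree $15$ accounts exactly for the discrepancy (the curve $C$ is then a hypersurface of degree $15$ in $\PP(1,3,5)$, consistent with the fact that a smooth such hypersurface has genus $\frac{(15-1-3-5)+\text{correction}}{\ldots}$ — the adjunction computation $K=\mathcal O(15-1-3-5)=\mathcal O(6)$ gives exactly the canonical bundle of a genus $4$ curve since $\deg\mathcal O(6)$ on a degree-$15$ curve in $\PP(1,3,5)$ is $6\cdot\frac{15}{1\cdot3\cdot5}=6$). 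So $R(C,Q)=\CC[\xi,\eta,\zeta]/(p)$ with $\deg p=15$ — this is the first assertion.

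For the normal form $p=\zeta^3-\eta^5+\xi p'$: I would look at $p$ modulo $\xi$, i.e., restrict to the divisor $\{\xi=0\}$. Since $h^0(Q)=1$, the section $\xi$ vanishes nowhere appropriate... more precisely $\xi$ is (up to scalar) the canonical generator and $\{\xi=0\}$ in $\PP(1,3,5)$ is the point $[0:*:*]$-locus — actually $\{\xi = 0\}\cong\PP(3,5)\cong\PP^1$, a single point with coordinates $\eta,\zeta$, and the scheme $C\cap\{\xi=0\}$ is cut out by $p(0,\eta,\zeta)$, a weighted-degree-$15$ polynomial in $\eta$ (degree $3$) and $\zeta$ (degree $5$), hence of the form $a\zeta^3+b\eta^5$ up to monomials $\eta^a\zeta^b$ with $3a+5b=15$: only $(a,b)=(5,0)$ and $(0,3)$ work since $3\nmid 5$ and there is no mixed solution. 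So $p(0,\eta,\zeta)=a\zeta^3+b\eta^5$; both $a,b$ must be nonzero, else $p$ would be reducible and $C$ singular (or $\xi$ a zero-divisor, contradicting that $\xi$ is a nonzerodivisor on the integral curve). Rescaling $\eta,\zeta$ makes $a=-b=1$, so $p=\zeta^3-\eta^5+\xi p'$ for a degree-$14$ polynomial $p'$, and this rescaling is an automorphism of $\PP(1,3,5)$.

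The main obstacle I anticipate is the first step: rigorously pinning down the Weierstrass semigroup at $Q$, i.e., that $h^0(kQ)$ takes the values $1,1,2,2,3$ rather than some other gap sequence of a genus $4$ point. One must use the specific relation $\mathcal O_C(2Q)=\mathcal O_C(L)$ and that $R(S,L)/(x_1)=R(C,2Q)$ together with the already-computed dimensions $h^0(S,kL)$ from Lemma~\ref{lispencil}(ii) via the exact sequence \eqref{quotientsequence}: indeed $\dim R(C,2Q)_k=h^0(S,kL)-h^0(S,(k-1)L)$, which Lemma~\ref{lispencil} evaluates to $2,2,5-2=\ldots$ wait — $h^0(2L)-h^0(L)=4-2=2$, $h^0(3L)-h^0(2L)=8-4=4$... hmm, so $\dim R(C,Q)_{2k}=\dim R(C,2Q)_k$ gives the even-degree dimensions of $R(C,Q)$, namely $1,2,4,\ldots$ in degrees $0,2,4$. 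Together with needing three generators in degrees $1,3,5$ for a genus-$4$ semigroup, and the fact that $h^0(2Q)$ could be $1$ (the generic case) — one must rule out $h^0(2Q)=2$ (hyperelliptic $Q$), which would force a generator in degree $2$. That $h^0(2Q)=1$, hence $2Q$ is not a $g^1_2$, should follow because $|2Q|=|L|_{|C}$ and if it were a pencil then $C\to\PP^1$ of degree $2$ ... but actually $|L|$ on $S$ is already a pencil and restricting to $C\in|L|$ gives $\mathcal O_C(L)=\mathcal O_C(C)$ which by adjunction relates to $K_C$; one needs the geometric fact (from Lemma~\ref{basepoint}) that forces the non-hyperelliptic behaviour. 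Carefully assembling this dimension bookkeeping — choosing $\xi\in H^0(C,Q)$, $\eta\in H^0(C,3Q)$, $\zeta\in H^0(C,5Q)$ lifting a basis and proving no relation exists before degree $15$ — is the delicate accounting, but it is routine once the semigroup is known.
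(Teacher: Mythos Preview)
Your overall strategy matches the paper's: compute $h^0(C,kQ)$ from the exact sequence \eqref{quotientsequence}, pick generators $\xi,\eta,\zeta$ in degrees $1,3,5$, find one relation in degree $15$, and normalise it. But you have misidentified where the difficulty lies.

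The exact sequence \eqref{quotientsequence} gives you all the \emph{even}-degree dimensions directly: $h^0(C,2kQ)=h^0(S,kL)-h^0(S,(k-1)L)$, so $h^0(\sO_C)=1$, $h^0(2Q)=2-1=1$, $h^0(4Q)=4-2=2$, $h^0(6Q)=8-4=4$ (your list ``$1,2,4$ in degrees $0,2,4$'' is off by a shift). In particular $h^0(2Q)=1$ is automatic and there is nothing to rule out there. Likewise $h^0(Q)=1$ is forced since $1\le h^0(Q)\le h^0(2Q)=1$, and $h^0(5Q)=3$ is forced since $h^0((k+1)Q)-h^0(kQ)\in\{0,1\}$ together with $h^0(4Q)=2$, $h^0(6Q)=4$.

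The genuine issue is the \emph{odd} degree $3$: a priori $h^0(3Q)\in\{1,2\}$, and the exact sequence says nothing about it. If $h^0(3Q)=1$ the gap sequence would be $1,2,3,7$ and you would need a generator in degree $4$, not $3$, wrecking the weights $(1,3,5)$. The paper settles this with Lemma~\ref{basepoint}(i): $Q$ is a base point of $|K_S|$, hence of $|K_S|_{|C}=|4Q|$, so $h^0(3Q)=h^0(4Q)=2$. That is the geometric input you were looking for but did not locate.

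For the normal form, your argument (restrict $p$ to $\{\xi=0\}$ and observe the only monomials of weighted degree $15$ in $\eta,\zeta$ are $\eta^5$ and $\zeta^3$) is correct in spirit; the paper phrases it equivalently by noting that $\eta,\zeta$ are chosen not to vanish at $Q$, so some nontrivial combination $a\eta^5-b\zeta^3$ vanishes at $Q$ and is therefore divisible by $\xi$. Your ``else $p$ is reducible'' is not quite the right reason for $a,b\neq 0$; rather, if say the $\zeta^3$-coefficient vanished then the singular point $[0{:}0{:}1]$ of $\PP(1,3,5)$ would lie on $C$, contradicting smoothness. Once the dimensions $h^0(kQ)$ are all known, your Hilbert-series count and the paper's ``same Hilbert function, hence isomorphism'' are the same argument.
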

\begin{proof} We  first calculate $h^0(C,mQ)$ for all $m\geq 0$. By 
the exact sequence (\ref{quotientsequence}), we have
\[
  h^0(C,2kQ) = h^0(S,kL)-h^0(S,(k-1)L)
\]
  for any $k\geq 0$. Together with Lemma \ref{lispencil}, this yields 
in particular
\[
  h^0(C,\sO_C)=h^0(C,2Q)=1, \quad h^0(C,4Q) =2,\quad h^0(C,6Q)=4,
\] which in turns implies  $h^0(C,Q)=1,\ \  h^0(C,5Q)=3$. 

Since $Q$ is a base point of 
$|K_S|$,  it is also a
base point of
$|4Q|= |{K_S}_{|C}| = {|K_S|}_{|C}$, which implies that
\[ h^0(C,3Q) = h^0(C,4Q) =2.
\] For $m\geq 7$, we have $h^0(C,mQ)= m-3 $ by the Riemann--Roch theorem.

Now take a nonzero section $\xi\in H^0(C,Q)$. Then
\[
  H^0(C,Q)=< \xi >,\quad H^0(C,2Q)=< \xi^2 >
\]
  and $\xi$ has only a simple zero at $Q$. There are sections $\eta\in 
H^0(C,3Q)\setminus\xi H^0(C,2Q)$ and
$\zeta\in H^0(C,5Q)\setminus\xi H^0(C,4Q)$.

Since both $\eta$ and $\zeta$ do not vanish at $Q$, there exist 
nonzero $a,b\in\CC$ such that $a\eta^5-b\zeta^3$ vanishes at $Q$. 
Therefore there is
a polynomial $p'$ in $\xi,\eta,\zeta$ of degree 14 such that
\[
  a\eta^5-b\zeta^3 = \xi p'.
\] Up to  rescaling the generators, we have $p = \eta^5- \zeta^3 + \xi p' =0$.

Now, $\xi, \eta, \zeta$ give a morphism into $\PP(1,3,5)$. Therefore 
the image is an irreducible  curve and there are no other relations 
than $p$ holding
among the three elements $\xi, \eta, \zeta$.

In other words we get by pull back  an injective ring homomorphism 
from  $\CC[\xi,\eta,\zeta]/(p)$ to $R(C,Q)$. Since, by the first 
part of our proof,
both rings have the same Hilbert function, they are isomorphic.

\end{proof}

\begin{remark}\label{choicef} Conversely a general curve $C=\{p=0\}$ 
of degree $15$ in $\PP(1,3,5)$ is smooth of genus
$1+\frac{15(15-9)}{2\cdot 15}=4$ and $R(C,Q)$ is naturally isomorphic 
to $\CC[\xi,\eta,\zeta]/p$,  where $\{Q\}:=C\cap(\xi=0)$ is a 
Weierstra\ss\
point whose semigroup is generated by $3$ and $5$;  the proof of 
Lemma \ref{rcp} shows that every smooth curve of genus $4$ with a 
Weierstra\ss\
point  of this form arises in this way.

The smooth curves of degree $15$ in $\PP(1,3,5)$ form a linear system 
of dimension $12$. Since $\dim \Aut \PP(1,3,5)=5$,  they form  a 
subvariety of
dimension $7$ in the moduli space of curves of genus $4$; note that 
it is a divisor in the  locus of the curves whose canonical image is 
contained in
a quadric cone,  which are those possessing only one $g^1_3$.
\end{remark}

\begin{proposition}\label{rc2p}
  \[
   R(C,2Q)=\CC[x_2,y,z_1,z_2,u,v]/I
  \] where $deg(x_2,y,z_1,z_2,u,v)=(1,2,3,3,4,5)$, and $I$ is 
generated by the equations
\begin{align*}\label{relation}
   f_1 & = x_2 z_2 - y^2 & \deg 4\\
   f_2 & = x_2 u- y z_1 & \deg 5\\
   f_3 & = y u - z_1 z_2 & \deg 6\\
   f_4 & = x_2 v - z_1^2 & \deg 6\\
   f_5 & = y v - z_1 u & \deg 7 \\
   f_6 & = z_2 v - u^2 & \deg 8\\
   f_7 & = z_1 A -  y B + x_2 D & \deg 8\\
   f_8 & =  u A -  z_2B + y D & \deg 9\\
   f_9 & = vA -  u B + z_1 D & \deg 10
\end{align*} Here $A,B,D$ are general polynomials of respective 
degrees $5$, $6$ and $7$. Up to automorphisms,  one can assume $A=v$, 
$B=z_2^2$.
\end{proposition}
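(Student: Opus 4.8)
The plan is to pin down a minimal generating set of the even subring $R(C,2Q)=R(C,Q)^{(2)}$, to exhibit the nine relations by a direct substitution, and then to prove that they generate the whole ideal by recognising the resulting quotient ring. First I would invoke Lemma~\ref{rcp} to write $R(C,Q)=\CC[\xi,\eta,\zeta]/(p)$ with $\deg(\xi,\eta,\zeta)=(1,3,5)$ and $p=\zeta^3-\eta^5+\xi p'$, $\deg p'=14$. The six monomials $\xi^2,\xi\eta,\xi\zeta,\eta^2,\eta\zeta,\zeta^2$ generate the even subring $\CC[\xi,\eta,\zeta]^{(2)}$ as a $\CC$-algebra, since every monomial $\xi^a\eta^b\zeta^c$ with $a+b+c$ even is a product of degree-two monomials in $\xi,\eta,\zeta$; hence their classes generate $R(C,2Q)$. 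Calling these classes $x_2,y,z_1,z_2,u,v$ (of degrees $1,2,3,3,4,5$) and comparing the values $h^0(C,2kQ)$ already computed in the proof of Lemma~\ref{rcp} shows the system is minimal. We thus get a surjection $\Phi\colon\CC[x_2,y,z_1,z_2,u,v]\twoheadrightarrow R(C,2Q)$.

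Next I would verify the relations. The polynomials $f_1,\dots,f_6$ are exactly the $2\times2$ minors of the symmetric matrix $N=\left(\begin{smallmatrix}x_2&y&z_1\\ y&z_2&u\\ z_1&u&v\end{smallmatrix}\right)$, which under $\Phi$ becomes the rank-one matrix $(\xi,\eta,\zeta)^{\mathsf T}(\xi,\eta,\zeta)$, so its $2\times2$ minors vanish. As for $f_7,f_8,f_9$, one checks that $(f_7,f_8,f_9)^{\mathsf T}=M(A,-B,D)^{\mathsf T}$, where $M$ is $N$ with its columns reversed; taking $A=v$, $B=z_2^2$ and $D$ a degree-$7$ expression of $p'$ in the generators (which exists, since $p'$ has even degree and so lies in the image of $\Phi$), one has under $\Phi$ that $M\mapsto(\xi,\eta,\zeta)^{\mathsf T}(\zeta,\eta,\xi)$ and $(A,-B,D)^{\mathsf T}\mapsto(\zeta^2,-\eta^4,p')^{\mathsf T}$, and $(\zeta,\eta,\xi)\cdot(\zeta^2,-\eta^4,p')^{\mathsf T}=\zeta^3-\eta^5+\xi p'=p$. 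Hence $(f_7,f_8,f_9)^{\mathsf T}$ maps to $(\xi,\eta,\zeta)^{\mathsf T}p\equiv 0$, and $\Phi$ factors through $R:=\CC[x_2,\dots,v]/(f_1,\dots,f_9)$.

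It remains to see that $\Phi$ induces an \emph{isomorphism}. Here I would first use the classical description of the Veronese: the presentation ideal of the subalgebra $\CC[\xi^2,\xi\eta,\xi\zeta,\eta^2,\eta\zeta,\zeta^2]=\CC[\xi,\eta,\zeta]^{(2)}$ in the six generators is generated by the $2\times2$ minors of the generic symmetric $3\times3$ matrix (the Veronese surface is projectively normal and arithmetically Cohen--Macaulay, and is cut out ideal-theoretically by these quadrics; the weights play no role). Thus $R':=\CC[x_2,\dots,v]/(f_1,\dots,f_6)\xrightarrow{\ \sim\ }\CC[\xi,\eta,\zeta]^{(2)}$, and therefore $R=\CC[\xi,\eta,\zeta]^{(2)}/(\xi p,\eta p,\zeta p)$. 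Now, because $\deg p=15$ is \emph{odd}, the even part of the principal ideal $(p)\subset\CC[\xi,\eta,\zeta]$ equals $p\cdot\CC[\xi,\eta,\zeta]^{(1)}$, and $\CC[\xi,\eta,\zeta]^{(1)}$ is generated over $\CC[\xi,\eta,\zeta]^{(2)}$ by $\xi,\eta,\zeta$; hence $(p)\cap\CC[\xi,\eta,\zeta]^{(2)}=(\xi p,\eta p,\zeta p)$ as an ideal of $\CC[\xi,\eta,\zeta]^{(2)}$, and
\[
 R=\CC[\xi,\eta,\zeta]^{(2)}\big/\big((p)\cap\CC[\xi,\eta,\zeta]^{(2)}\big)=\big(\CC[\xi,\eta,\zeta]/(p)\big)^{(2)}=R(C,Q)^{(2)}=R(C,2Q),
\]
which proves the proposition. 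The normal form of $p$ from Lemma~\ref{rcp} gives directly $A=v$, $B=z_2^2$; for a general curve $C$ (equivalently, general $p'$ of degree $14$, cf.\ Remark~\ref{choicef}) the remaining datum $D$ is a general polynomial of degree $7$, and any admissible triple $(A,B,D)$ can be brought to the stated form by an automorphism of $\PP(1,3,5)$.

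The one nonformal ingredient is the step just used, namely that the $2\times2$ minors of the symmetric $3\times3$ matrix generate the \emph{entire} presentation ideal of $\CC[\xi,\eta,\zeta]^{(2)}$ --- a priori $\ker(R'\to\CC[\xi,\eta,\zeta]^{(2)})$ could be strictly larger than $(f_1,\dots,f_6)$ --- which must be imported from the theory of symmetric determinantal rings. Once this is in hand, everything is bookkeeping; the only point worth flagging is the parity of $\deg p$, which is exactly what makes the even part of $(p)$ coincide with the ideal generated by $\xi p,\eta p,\zeta p$.
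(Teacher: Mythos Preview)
Your argument is correct and follows essentially the same route as the paper: both identify $R(C,2Q)$ as the even part of $\CC[\xi,\eta,\zeta]/(p)$, take the six degree-two monomials as generators, recognize $f_1,\dots,f_6$ as the $2\times2$ minors cutting out the Veronese image of $\PP(1,3,5)$, and obtain $f_7,f_8,f_9$ as the expressions of $\xi p,\eta p,\zeta p$ in the new variables. Your write-up is in fact more explicit than the paper's on two points the paper leaves implicit --- that the minors generate the full presentation ideal of the Veronese subring, and that the parity of $\deg p$ forces $(p)\cap\CC[\xi,\eta,\zeta]^{(2)}=(\xi p,\eta p,\zeta p)$ --- so there is nothing to correct.
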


\begin{proof} We have shown that the graded ring $R(C,Q)$ corresponds 
to a projectively normal embedding in the
weighted projective plane $\PP(1,3,5)$. Therefore the subring 
$R(C,2Q)$ is the even degree part of $R(C,Q)$,  a quotient of the graded ring of the Veronese embedding of the 
plane $\PP(1,3,5)$.

In other words, since the three generators $\xi,\eta,\zeta$ of 
$R(C,Q)$ have odd degrees, the even part $R(C,2Q)$ is generated by 
the six products
\[\begin{matrix}
    x_2:=\xi^2, & y:=\xi\eta, & z_1:=\xi\zeta, \\
    z_2:=\eta^2, & u:=\eta\zeta, & v:=\zeta^2.
   \end{matrix}
\] These in fact define a closed embedding 
$\varphi:\PP(1,3,5)\rightarrow \PP(1,2,3^2,4,5)$. Generators of the 
ideal of $\varphi(\PP(1,3,5))$ are the
$2\times 2$ minors of the $3\times 3$ symmetric matrix
\[
  \left(
  \begin{matrix}
    x_2 & y & z_1\\
    y & z_2 & u \\
    z_1 & u & v
  \end{matrix} \right).
\] Note that $C$ is the curve defined by $p=0$ in $\PP(1,3,5)$.  So 
the ideal  $I$ of $\varphi(C)$ is generated by the defining equations 
of
$\varphi(\PP(1,3,5))$ plus $\xi p, \eta p, \zeta p$.

In view of Lemma~\ref{rcp}, we can write these three homogeneous 
polynomials in terms of
$x_2,y,z_1,z_2,u,v$:
\begin{align*}
  \xi p_{15} &= z_1 v - y z_2^2 + x_2 D \\
  \eta p_{15} &= u v - z_2^3 + y D \\
  \zeta p_{15} & = v^2  - z_2^2 u + z_1 D
\end{align*}

\end{proof} Note that $R(C,2Q)$ is Cohen-Macaulay. In fact for any 
smooth projective curve $C$ and an ample line bundle $H$,  the graded 
ring
$R(C,H)$ is Cohen-Macaulay (see \cite[Prop. 8.6]{Ha2} and its proof).

\section{Two families of surfaces}\label{sectionsyzygy} Recall that 
$R := R(S,L)/(x_1)=  R(C,2Q)$, where $x_1$ is an element of 
$H^0(S,L)$ defining
the curve $C$. The hyperplane section principle \cite[Prop. 1.2]{R1} 
gives the following, which is the explicit counterpart of the 
existence of a flat
1-dimensional family induced by the function $x_1$:
\begin{enumerate}
  \item[(i)] $R(S,L)$ needs exactly one more generator, namely $x_1$, 
and the other generators are lifted from $R$;
\item[(ii)] the relations $F_1, \dots F_9$ among the generators  of 
$R(S,L)$ are liftings of  $f_1,\cdots,f_9$;
\item[(iii)] moreover the first syzygies among the $f_i$ are lifted 
to those among the  $F_i$.
\end{enumerate}

Point (iii) is the tricky part of the principle, and is where 
``formats" are useful in order to write explicitly a flat family 
having as  basis a
locally closed set of an affine space. A format is simply a 
way to write an ideal in such a way that the obvious first syzygies 
are all
the first syzygies (in other words, one produces automatically a flat 
family).

We will describe two formats. Each of them will produce a family of 
minimal surfaces of general type with $p_g=4$, $q=0$, $K^2=8$ and
  even canonical divisor.

\subsection{The extrasymmetric format}\label{pfaffiansection} This 
format was first introduced by M. Reid and D. Dicks (see \cite{R1}, and
\cite{BCP1,BCP2} for further applications and a discussion).

Consider a $6\times 6$ skewsymmetric {\it extrasymmetric} matrix
\[\tilde N=\left(
  \begin{matrix}
    & n_1 & n_2 & n_3 & n_4 & n_5\\
    &  & n_6 & n_7 & n_8 & n_4 \\
   & &  & n_9 & an_7 & an_3 \\
   &  &  &  & bn_6 & bn_2\\
     &  & &  &  & abn_1\\
   &&&&&
\end{matrix} \right),
\] and let $\sI_{\sE} \subset {\tilde 
\AAA}_{\sE}:=\CC[n_1,n_2,\ldots,n_9,a,b]$ be the ideal generated by 
the
$4 \times 4$ pfaffians of $\tilde N$.  A minimal system of generators 
of $\sI_{\sE}$ is given by $9$ of these pfaffians, the last $6$ being 
just
repetitions of simple multiples of them. This $9$ generators are 
yoked by exactly $16$ independent syzygies,  which we can  explicitly
compute (see also
\cite[5.5]{R1}).

\begin{definition} Let $\AAA$ be any weighted polynomial ring and 
consider a ring homomorphism $\varphi \colon {\tilde \AAA}_{\sE} 
\rightarrow
\AAA$; then the ideal $I$ generated by $\varphi(\sI_{\sE})$ is 
generated by the $4 \times 4$ pfaffians  of the $6 \times 6$ 
skewsymmetric matrix
$\varphi({\tilde N})$, obtained by ${\tilde N}$ by substituting to 
each entry its image. We will say that $\varphi(\tilde{N})$is an 
extrasymmetric
format for the quotient  ring $\AAA/I$.
\end{definition}

\begin{example}\label{ex: extrasym} Computing the $4 \times 4$ 
pfaffians of the matrix
\[N=\left(
  \begin{matrix}
    & A & B & z_1 & y & x_2\\
    &  &D & u & z_2 & y \\
   & &  & v & u & z_1 \\
   &  &  &  & 0 & 0\\
     &  & &  &  & 0\\
   &&&&&
\end{matrix} \right),
\] the reader can check that it is an extrasymmetric format 
for $R$. Here
$\AAA= \CC[x_2,y,z_1,z_2,u,v]$ with the grading given in Proposition 
$\ref{rc2p}$.

\end{example}

Let us consider, in  Example \ref{ex: extrasym}, ${\tilde 
\AAA}_{\sE}$ graded by the grading making $\varphi$ a graded 
homomorphism. Since
$\varphi$ is surjective, it yields an isomorphism of graded rings 
$\AAA\cong \tilde \AAA_{\sE}/\ker \varphi$ and
\[
  \ker \varphi= (a-1,b,n_1-{\tilde A}(n_i),n_2-\tilde{ B}(n_i), 
n_6-{\tilde D}(n_i))
\] where $\tilde A$, $\tilde B$, $\tilde D$ are obtained by $A$, $B$, 
$D$ replacing the variables $x_2,y,z_1,z_2,u,v$ by 
$n_5,n_4,n_3,n_8,n_7,n_9$
respectively.

Consider $ \tilde R={\tilde \AAA}_{\sE}/\sI_{\sE}$, and write $\tilde 
f_1,\cdots,\tilde f_9$ for the nine pfaffians of $\tilde N$ generating
$\sI_{\sE}$. Here we can arrange the indices so that $\varphi(\tilde 
f_i)=f_i$ for $1\leq i\leq 9$.  Note that $R = \tilde 
R\otimes_{{\tilde
\AAA}_{\sE}}\AAA$ is such that $ Spec (R)$ a codimension five 
complete intersection in  $Spec (\tilde R)$.
\begin{lemma}\label{tor}
$Tor^{{\tilde \AAA}_{\sE}}_1(\AAA, \tilde R)=0$.
\end{lemma}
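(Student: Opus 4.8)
The plan is to identify $Tor^{\tilde\AAA_\sE}_1(\AAA,\tilde R)$ with a Koszul homology group and then to show that the relevant sequence is regular on $\tilde R$. Since $R=\tilde R\otimes_{\tilde\AAA_\sE}\AAA$, the vanishing of this $Tor$ is precisely the assertion that $\mathrm{Spec}(R)$ sits \emph{transversally} inside $\mathrm{Spec}(\tilde R)$ as a complete intersection.

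First I would record that $\ker\varphi=(\theta_1,\dots,\theta_5)$, with $\theta_1=a-1$, $\theta_2=b$, $\theta_3=n_1-\tilde A(n_i)$, $\theta_4=n_2-\tilde B(n_i)$, $\theta_5=n_6-\tilde D(n_i)$, is generated by a regular sequence in the regular ring $\tilde\AAA_\sE=\CC[n_1,\dots,n_9,a,b]$: indeed $\tilde\AAA_\sE/\ker\varphi\cong\AAA$ is a polynomial ring in $6$ variables, so $\ker\varphi$ has height $11-6=5$, equal to the number of its generators, and in a Cohen--Macaulay (here regular) ring five elements generating an ideal of height five form a regular sequence. Consequently the Koszul complex $K_\bullet=K_\bullet(\theta_1,\dots,\theta_5;\tilde\AAA_\sE)$ is a finite free resolution of $\AAA$ over $\tilde\AAA_\sE$, whence
$$Tor^{\tilde\AAA_\sE}_i(\AAA,\tilde R)\;\cong\;H_i\bigl(K_\bullet\otimes_{\tilde\AAA_\sE}\tilde R\bigr)\;=\;H_i\bigl(K_\bullet(\bar\theta_1,\dots,\bar\theta_5;\tilde R)\bigr),$$
the Koszul homology of the images $\bar\theta_j\in\tilde R=\tilde\AAA_\sE/\sI_\sE$. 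Thus it suffices to prove that $\bar\theta_1,\dots,\bar\theta_5$ is a regular sequence on $\tilde R$, for then all the Koszul homology in positive degrees vanishes, in particular that in degree one.

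To this end I would invoke the structure of the generic extrasymmetric pfaffian format of Reid and Dicks (\cite[\S5]{R1}): the nine pfaffians generating $\sI_\sE$, together with their sixteen first syzygies, are the beginning of an explicit finite free resolution of $\tilde R$ over $\tilde\AAA_\sE$ whose length equals $\mathrm{codim}\,\tilde R=4$, so by the Auslander--Buchsbaum formula $\tilde R$ is Cohen--Macaulay of dimension $7$. Now, since $\mathrm{Spec}(R)$ is a codimension-five complete intersection in $\mathrm{Spec}(\tilde R)$, the ideal $(\bar\theta_1,\dots,\bar\theta_5)\subset\tilde R$ has height $5$; in the Cohen--Macaulay ring $\tilde R$, where grade equals height, five elements generating an ideal of height five form a regular sequence. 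Combined with the reduction above, this yields $Tor^{\tilde\AAA_\sE}_i(\AAA,\tilde R)=0$ for all $i\ge1$, and in particular $Tor^{\tilde\AAA_\sE}_1(\AAA,\tilde R)=0$.

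The step that needs care — the real content here — is the Cohen--Macaulayness of $\tilde R$: one must make sure that specializing the generic $6\times6$ skewsymmetric matrix to the extrasymmetric matrix $\tilde N$, with its repeated entries and the parameters $a,b$, neither drops the depth nor alters the codimension, so that the format resolution genuinely has length $4$. This is exactly what the extrasymmetric format of Reid and Dicks provides; granting it, the remainder is formal homological algebra together with the codimension statement already recorded for $\mathrm{Spec}(R)\subset\mathrm{Spec}(\tilde R)$ and the Cohen--Macaulayness of $R=R(C,2Q)$.
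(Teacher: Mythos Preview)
Your argument is correct and is essentially a careful unpacking of the paper's one-line proof, which simply notes that $\ker\varphi$ is generated by a regular sequence and cites Matsumura. The paper leaves implicit precisely the two points you make explicit --- the Koszul reduction and the fact that regularity of the $\theta_j$ on $\tilde R$ (not just on $\tilde\AAA_\sE$) is what is needed, which in turn rests on the Cohen--Macaulayness of $\tilde R$ coming from the extrasymmetric format together with the codimension-five complete-intersection remark recorded just before the lemma.
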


\begin{proof} 
This follows since  $\AAA\cong \tilde \AAA_{\sE}/\ker \varphi$  and $  \ker \varphi $ is generated by a regular sequence, 
 see   \cite{matsumura}).

\end{proof}

\begin{corollary} The first syzygy module of $R$ is a reduction of 
the one of $\tilde R$.
\end{corollary}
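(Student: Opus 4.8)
The plan is to deduce the statement from Lemma~\ref{tor} by a standard change-of-rings argument for Tor, together with the hyperplane section principle \cite[Prop.~1.2]{R1} already invoked above. Concretely, the first syzygy module of $R$ as an $\AAA$-module is computed by $\mathrm{Tor}^{\AAA}_{\bullet}(R,\CC)$ or, more usefully here, by a minimal free resolution of $R$ over $\AAA$; likewise the first syzygy module of $\tilde R$ over $\tilde \AAA_{\sE}$ comes from its minimal free resolution. The point of the word ``reduction'' is that the $16$ explicit first syzygies among the nine pfaffians $\tilde f_1,\dots,\tilde f_9$, which hold universally in $\tilde \AAA_{\sE}$, map under $\varphi$ to $16$ first syzygies among $f_1,\dots,f_9$ in $\AAA$, and these already generate all first syzygies of $I=\varphi(\sI_{\sE})\subset\AAA$.

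First I would make precise what must be shown: writing $\AAA = \tilde\AAA_{\sE}/\ker\varphi$ with $\ker\varphi$ generated by the regular sequence of Lemma~\ref{tor}, and $\tilde R = \tilde\AAA_{\sE}/\sI_{\sE}$, one has $R = \tilde R\otimes_{\tilde\AAA_{\sE}}\AAA$. Taking a free resolution $P_\bullet\to\tilde R$ over $\tilde\AAA_{\sE}$ whose first two terms encode the nine pfaffians and the sixteen syzygies, I would tensor with $\AAA$ over $\tilde\AAA_{\sE}$. By Lemma~\ref{tor}, $\mathrm{Tor}^{\tilde\AAA_{\sE}}_1(\AAA,\tilde R)=0$, so $P_\bullet\otimes_{\tilde\AAA_{\sE}}\AAA$ stays exact in degree one; that is, the complex
\[
P_2\otimes\AAA \longrightarrow P_1\otimes\AAA \longrightarrow P_0\otimes\AAA \longrightarrow R \longrightarrow 0
\]
is exact. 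Since $P_0\otimes\AAA$ is the free $\AAA$-module on the generators $f_1,\dots,f_9$ of $I$ (the images of the pfaffians generate $I$ by the Definition and Example~\ref{ex: extrasym}), the image of $P_2\otimes\AAA\to P_1\otimes\AAA$ is precisely the first syzygy module of $I$ over $\AAA$; and $P_1\otimes\AAA$ is free on the sixteen syzygies of $\tilde R$. Hence the first syzygy module of $R$ is generated by the images of the sixteen syzygies of $\tilde R$ — i.e.\ it is a ``reduction'' of the latter in the sense that it is the image of $P_1\otimes_{\tilde\AAA_{\sE}}\AAA$. This is exactly what is needed to feed point (iii) of the hyperplane section principle, since liftability of these explicitly described syzygies to syzygies among the $F_i$ is what the extrasymmetric format guarantees automatically.

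The routine points I would spell out but not belabor: that tensoring a free resolution of $\tilde R$ over $\tilde\AAA_{\sE}$ with $\AAA$ and computing homology gives $\mathrm{Tor}^{\tilde\AAA_{\sE}}_i(\AAA,\tilde R)$ — this is the definition — and that a ``reduction'' here simply means passing from a possibly non-minimal generating set (the sixteen syzygies, a few of which may become redundant or non-minimal over $\AAA$ after the specialization $a\mapsto 1$, $b\mapsto 0$, $n_1\mapsto A$, etc.) to a minimal one; minimality is not claimed in the statement, only generation. The main obstacle, such as it is, is purely bookkeeping: one must be confident that $P_1\otimes_{\tilde\AAA_{\sE}}\AAA$ really maps onto $\ker(P_0\otimes\AAA\to R)$ and not merely into it, which is where the vanishing $\mathrm{Tor}_1=0$ is indispensable — without it the specialization could create new syzygies not coming from $\tilde R$. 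Given Lemma~\ref{tor}, there is no such new phenomenon, and the corollary follows immediately.
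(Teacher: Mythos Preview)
Your argument is essentially identical to the paper's: tensor a free resolution of $\tilde R$ over $\tilde\AAA_{\sE}$ with $\AAA$ and invoke the vanishing of $\mathrm{Tor}_1^{\tilde\AAA_{\sE}}(\AAA,\tilde R)$ from Lemma~\ref{tor} to conclude exactness at the first stage, so that the syzygies among the $f_i$ are exactly the images of those among the $\tilde f_i$. One purely notational slip to fix: your indices drift by one midway through --- with $P_0=\tilde\AAA_{\sE}$, it is $P_1\otimes\AAA$ (not $P_0\otimes\AAA$) that is free on $f_1,\dots,f_9$, and $P_2\otimes\AAA$ (not $P_1\otimes\AAA$) that is free on the sixteen syzygies --- but this does not affect the substance of the argument.
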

\begin{proof} Let $ \tilde L_{\bullet}\rightarrow \tilde R\rightarrow 
0$ be a free resolution of $\tilde R$ over
${\tilde \AAA}_{\sE}$. By Lemma \ref{tor}, $\AAA \otimes_{{\tilde 
\AAA}_{\sE}}\tilde  L_{\bullet}\rightarrow \AAA\otimes_{{\tilde 
\AAA}_{\sE}}\tilde
R\rightarrow 0$ is exact at
$\AAA\otimes_{{\tilde \AAA}_{\sE}}\tilde L_1$, which implies the corollary.
\end{proof}

Therefore, to calculate the syzygy module of $R$, it suffices to work 
out that of $\tilde R$.

\begin{corollary}\label{syzygy}The syzygies
\begin{align*} & \sigma_1: -z_1 f_1 + y  f_2 - x_2 f_3 = 0  & \deg 
7\\ & \sigma_2: -u f_1 + z_2  f_2 - y f_3 = 0  & \deg 8\\ & \sigma_3: 
z_1  f_2 - y
f_4 + x_2  f_5 = 0    & \deg 8\\ & \sigma_4: v f_1 + z_1  f_3 -z_2 
f_4 + y  f_5 = 0 & \deg 9\\ & \sigma_5: v  f_1 - u  f_2 + y  f_5 - 
x_2  f_6 = 0 &
\deg 9\\ & \sigma_6: v  f_2 - u  f_4 + z_1  f_5  = 0 & \deg 10\\  & 
\sigma_7: -u  f_3 + z_2  f_5 - y  f_6 = 0 & \deg 10\\ & \sigma_8: -v 
f_3 + u  f_5
- z_1  f_6 = 0 & \deg 11\\ & \sigma_9: B  f_1 - A  f_2 - y  f_7 + x_2 
f_8 = 0 & \deg 10\\ & \sigma_{10}: -B  f_2  + A  f_4 + z_1  f_7 - x_2 
f_9 = 0
& \deg 11\\ & \sigma_{11}:  D  f_1 - A  f_3 - z_2  f_7 + y  f_8 = 0 & 
\deg 11\\ & \sigma_{12}: B  f_3 - A  f_5 - z_1  f_8 + y  f_9 = 0 & 
\deg 12\\ &
\sigma_{13}: -D  f_2 + A  f_5 + u  f_7 - y  f_9 = 0 & \deg  12\\ & 
\sigma_{14}: D  f_3 - A  f_6 - u  f_8 + z_2  f_9 = 0 & \deg 13\\ & 
\sigma_{15}: -D
f_4 + B  f_5 + v  f_7 - z_1  f_9 = 0 & \deg 13\\ & \sigma_{16}: D 
f_5 - B  f_6 - v  f_8 + u  f_9 = 0 & \deg 14
\end{align*} generate all the syzygies between $f_1,\dots,f_9$.
\end{corollary}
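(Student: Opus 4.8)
The plan is to reduce the computation of the syzygy module of $R$ to that of $\tilde R = {\tilde \AAA}_{\sE}/\sI_{\sE}$, which is legitimate by the Corollary following Lemma~\ref{tor}: since $\AAA \cong {\tilde \AAA}_{\sE}/\ker\varphi$ with $\ker\varphi$ generated by a regular sequence, tensoring a free resolution $\tilde L_\bullet \to \tilde R \to 0$ with $\AAA$ stays exact at $\tilde L_1$, so a generating set of the first syzygies of $\tilde R$ specializes to a generating set of the first syzygies of $R$. Thus I first work over ${\tilde \AAA}_{\sE}$ with the nine pfaffians $\tilde f_1,\dots,\tilde f_9$ of the extrasymmetric matrix $\tilde N$.

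For the extrasymmetric $6\times 6$ skewsymmetric matrix the first syzygies of the ideal of $4\times 4$ pfaffians are classical (this is part of the ``format'': see \cite[5.5]{R1}, \cite{BCP1,BCP2}). Concretely, the Buchsbaum--Eisenbud complex for the $4\times 4$ pfaffians of a generic $6\times 6$ skewsymmetric matrix gives, for each index $k\in\{1,\dots,6\}$, a relation $\sum_{i} \pm\, m_{ki}\,\mathrm{pf}_i = 0$ among the $15$ sub-pfaffians, where $\mathrm{pf}_i$ is the pfaffian of the $4\times 4$ submatrix omitting row/column $i$ and its complementary pair; in the extrasymmetric case the $15$ pfaffians collapse to the $9$ generators $\tilde f_1,\dots,\tilde f_9$ (the last $6$ being simple multiples of these nine by $a,b,ab$), and substituting these collapses correspondingly produces a finite explicit list of syzygies. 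Carrying out this bookkeeping, and then applying $\varphi$ (which sends $n_1,n_2,n_6\mapsto A,B,D$, the coordinates $n_5,n_4,n_3,n_8,n_7,n_9\mapsto x_2,y,z_1,z_2,u,v$, and $a,b\mapsto 1,0$), turns the pfaffian relations into precisely the sixteen displayed relations $\sigma_1,\dots,\sigma_{16}$, and the degrees match the weights $\deg(x_2,y,z_1,z_2,u,v)=(1,2,3,3,4,5)$ together with $\deg A=5,\deg B=6,\deg D=7$. I would present the matrix $\tilde N$ with the pfaffian generators labelled, exhibit the six Buchsbaum--Eisenbud relations together with the repetition identities among the fifteen pfaffians, and then read off, row by row, that their images are exactly the $\sigma_j$.

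The remaining point is that these sixteen really \emph{generate} all syzygies, i.e.\ that the extrasymmetric presentation is a genuine format: the map $\bigoplus_{j=1}^{16}{\tilde \AAA}_{\sE}(-d_j) \to \bigoplus_{i=1}^{9}{\tilde \AAA}_{\sE}(-e_i)$ whose columns are the $\sigma_j$ has image equal to the full syzygy module. The clean way to see this is that the Buchsbaum--Eisenbud/Kustin--Miller resolution of the pfaffian ideal of a $6\times 6$ skewsymmetric matrix is \emph{universal} (valid over $\ZZ$ for the generic matrix) and hence remains a resolution after the ring homomorphism to ${\tilde \AAA}_{\sE}$, because one checks — via the Buchsbaum--Eisenbud acyclicity criterion — that the specialized complex still has the right grades of ideals of minors at each spot; this is exactly what makes ``extrasymmetric'' a format. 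Alternatively, and more in the spirit of this paper, one can verify generation by a Hilbert-series count: the candidate resolution with the given Betti numbers and twists $d_j,e_i$ yields a Hilbert series, and comparing it with the known Hilbert function of $R = R(C,2Q)$ (computed in Proposition~\ref{rc2p} and its proof, where $R$ is Cohen--Macaulay of the expected dimension) forces the sixteen syzygies to span, since any missing syzygy would drop the Hilbert series below the correct value. I expect the main obstacle to be purely organizational: keeping the sign conventions and the index relabelling between the fifteen abstract pfaffians and the nine generators $\tilde f_i$ consistent, so that the images come out as the listed $\sigma_j$ with the stated signs. Once the labelling is fixed the verification of each $\sigma_j$ is a direct pfaffian expansion, and generation follows from the universality of the pfaffian resolution (equivalently, from the Hilbert-series comparison with the Cohen--Macaulay ring $R$).
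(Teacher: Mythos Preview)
Your proposal is correct and follows essentially the same route as the paper: reduce the syzygy computation for $R$ to that of $\tilde R$ via the Tor-vanishing corollary, then invoke Reid's computation \cite[5.5]{R1} of the relations among the pfaffians of the extrasymmetric matrix $\tilde N$. The paper's proof is in fact just this two-line citation; your write-up supplies considerably more detail (the Buchsbaum--Eisenbud mechanism, the collapse from fifteen to nine generators, and an alternative Hilbert-series check), but the underlying argument is the same.
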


\begin{proof}
This follows by Corollary \ref{syzygy} and by the computation of the relations among the pfaffians of $\tilde N$ done  in   \cite[5.5]{R1}.
\end{proof}

This is useful because we can then easily construct flat deformations 
of this ring. Indeed, if we lift  the matrix to a bigger ring $\BB$, 
we will get
automatically a new ideal in $\BB$ generated by lifts of $I$,  and 
also the first syzygies of $R$ will automatically lift, yielding 
flatness. More
formally
\begin{corollary}\label{flexibility} Consider the map $\varphi \colon 
\AAA_{\sE} \rightarrow \AAA$ in Example \ref{ex: extrasym}, a 
surjective  ring
homomorphism $\pi \colon \BB \rightarrow \AAA$, and a ring 
homomorphism $\psi\colon \AAA_{\sE} \rightarrow \BB$  such that 
$\varphi=\pi \circ \psi$
({\it i.e. ``$\psi$ lifts $\varphi$"}).

Let $F_1, \ldots, F_9$ be the nine pfaffians of $\tilde{N}$ such that 
$\pi(F_i)=f_i$.  Then every relation among the $f_i$ lifts to a 
relation among
the $F_i$.
\end{corollary}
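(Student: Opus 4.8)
The plan is to reduce the statement to the \emph{universality} of the extrasymmetric format: the sixteen first syzygies of the $6\times 6$ extrasymmetric pfaffian ideal are honest identities already in the generic polynomial ring ${\tilde \AAA}_{\sE}=\CC[n_1,\ldots,n_9,a,b]$, hence they survive under any ring homomorphism out of ${\tilde \AAA}_{\sE}$. Concretely, let $\tilde f_1,\ldots,\tilde f_9\in{\tilde \AAA}_{\sE}$ be the nine pfaffians of $\tilde N$ forming the minimal generating set of $\sI_{\sE}$, indexed so that $\varphi(\tilde f_i)=f_i$ as in Example \ref{ex: extrasym}. By \cite[5.5]{R1} there are sixteen universal syzygies $\tilde\sigma_j$ ($j=1,\ldots,16$), each of the shape $\sum_{i=1}^{9}\tilde a_{ji}\,\tilde f_i=0$ with $\tilde a_{ji}\in{\tilde \AAA}_{\sE}$ and the left-hand side being literally $0$ in ${\tilde \AAA}_{\sE}$, and these generate the whole first syzygy module of $(\tilde f_1,\ldots,\tilde f_9)$ over ${\tilde \AAA}_{\sE}$. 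By Corollary \ref{syzygy} — equivalently, by Lemma \ref{tor} and the corollary following it, which together show that the first syzygy module of $R=\AAA/I$ over $\AAA$ is obtained from that of $\tilde R$ by base change along $\varphi$ — this module is generated by the images $\sigma_j:=\varphi(\tilde\sigma_j)$; hence it suffices to lift each $\sigma_j$.

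The core observation is that a $4\times 4$ pfaffian is a fixed polynomial in the matrix entries, so its formation commutes with any ring homomorphism. Therefore $\psi(\tilde f_i)$ is the corresponding pfaffian of $\psi(\tilde N)$, the $6\times 6$ skewsymmetric matrix obtained from $\tilde N$ by replacing each entry with its $\psi$-image, and $\pi\bigl(\psi(\tilde f_i)\bigr)=\varphi(\tilde f_i)=f_i$; since $F_i$ was defined as precisely the pfaffian whose image under $\pi$ is $f_i$, we get $\psi(\tilde f_i)=F_i$ with matching indices. Applying $\psi$ to the identity $\sum_i\tilde a_{ji}\,\tilde f_i=0$ then yields $\sum_i\psi(\tilde a_{ji})\,F_i=0$ in $\BB$, a genuine relation among the $F_i$, whose coefficients satisfy $\pi\bigl(\psi(\tilde a_{ji})\bigr)=\varphi(\tilde a_{ji})$, i.e.\ they reduce to the coefficients of $\sigma_j$. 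So each $\sigma_j$ lifts.

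To finish, an arbitrary relation $r=(r_1,\ldots,r_9)$ among the $f_i$ is an $\AAA$-combination $r=\sum_{j}c_j\,\sigma_j$; choosing arbitrary lifts $C_j\in\BB$ of the $c_j$ (possible since $\pi$ is surjective) and setting $R_i:=\sum_j C_j\,\psi(\tilde a_{ji})$ gives $\sum_i R_iF_i=\sum_jC_j\bigl(\sum_i\psi(\tilde a_{ji})F_i\bigr)=0$ with $\pi(R_i)=r_i$, so $r$ lifts; the same computation produces graded lifts when $\pi$ and $\psi$ are graded. I do not anticipate a genuine obstacle here: the only points needing care are the bookkeeping that the nine pfaffians singled out by $\pi(F_i)=f_i$ are exactly the $\psi(\tilde f_i)$ — i.e.\ that the indexing of pfaffians is matched throughout — and the input, already contained in \cite[5.5]{R1} together with Corollary \ref{syzygy}, that the list of sixteen syzygies is both complete and formal, valid over ${\tilde \AAA}_{\sE}$ and hence stable under the specializations $\varphi$ and $\psi$.
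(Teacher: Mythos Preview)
Your proposal is correct and follows exactly the same idea as the paper's proof: the sixteen syzygies $\tilde\sigma_j$ are universal identities in ${\tilde\AAA}_{\sE}$, so applying $\psi$ to them gives relations among the $F_i=\psi(\tilde f_i)$ in $\BB$ which, via $\varphi=\pi\circ\psi$, lift the $\sigma_j$. The paper's proof is a two-sentence summary of this; you have simply spelled out more carefully the bookkeeping (that $F_i=\psi(\tilde f_i)$ because pfaffians commute with ring maps) and the routine extension from the generating syzygies to arbitrary relations.
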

\begin{proof} Each relation $\sigma_j$ is obtained by applying $\varphi$ to a 
relation $\tilde \sigma_j$ among the $\tilde f_j$.  Applying $\psi$ to 
the same
relations will give the required lifts.
\end{proof}

By the hyperplane section principle, we can use the above to construct a 
family of surfaces.

\begin{proposition}\label{ME} Consider the extrasymmetric matrix
\[\sN =\left(
  \begin{matrix}
    & \sA & \sB & z_1 & y & x_2\\
    &  &\sD & u & z_2 & y \\
   & &  & v & u & z_1 \\
   &  &  &  & 0 & 0\\
     &  & &  &  & 0\\
   &&&&&
\end{matrix} \right),
\] where $\sA$, $\sB$, $\sD$ are weighted polynomials of respective 
degrees $5$, $6$ and $7$ in the graded polynomial ring
$\CC[x_1,x_2,y,z_1,z_2,u,v]$ with weights $(1,1,2,3,3,4,5)$, and let 
$X \subset \PP(1^2,2,3^2,4,5)$ be given by the vanishing of the $4 
\times 4$
pfaffians of $\sN$.

Then, for general choice of  $\sA$, $\sB$, $\sD$, $X$ is a surface 
with at worse rational double points as  singularities. In this case 
$K^2_X=8$,
$p_g(X)=4$, $q=0$, $K_X=\sO_X(2)$ and $|K_X|$ is not base point free. 
We obtain in this way  a $35$-dimensional unirational family $M_{\sE}$ in the 
moduli
space of  surfaces of general type.
\end{proposition}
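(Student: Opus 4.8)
The plan is to use the hyperplane section principle together with the flatness built into the extrasymmetric format. First I would observe that setting $x_1=0$ in $\sN$ recovers exactly the matrix $\sN|_{x_1=0}$, which — once $\sA,\sB,\sD$ are taken general and reduced modulo $x_1$ — agrees with the matrix $N$ of Example \ref{ex: extrasym} after the coordinate normalizations $A=v$, $\sB\equiv z_2^2$ allowed by Proposition \ref{rc2p}. Thus the scheme $X_0:=X\cap\{x_1=0\}\subset\PP(1,2,3^2,4,5)$ is precisely $\Proj R(C,2Q)$, the cone over the curve $C$ studied in Proposition \ref{rc2p}, and in particular $X_0$ is Cohen--Macaulay and two-dimensional. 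By Corollary \ref{flexibility} (applied with $\BB=\CC[x_1,x_2,y,z_1,z_2,u,v]$, $\pi$ the quotient by $x_1$, and $\psi$ the obvious lift of $\varphi$), every syzygy among $f_1,\dots,f_9$ lifts to a syzygy among the nine pfaffians $F_1,\dots,F_9$ of $\sN$; hence $x_1$ is a nonzerodivisor on $\AAA_X:=\CC[x_1,\dots,v]/(F_1,\dots,F_9)$ and $\AAA_X/(x_1)=R(C,2Q)$. Consequently $X$ is a flat family over $\AAA^1$ (the $x_1$-line) whose special fibre is the cone $X_0$, so $X$ is a three-dimensional Cohen--Macaulay scheme; restricting to a general hyperplane $\{x_1=c\}$, $c\neq0$, and noting that $\{x_1=c\}$ meets neither of the two singular strata of the weighted projective space used in the proof of Theorem \ref{fablemma}, we get that the general such slice — call it $S'$ — is a two-dimensional subscheme of $\PP(1,2,3^2,4,5)$, Cohen--Macaulay, hence with no embedded components, and projectively normal.

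Next I would compute the invariants of $S'$ (equivalently of $X=\{F_i=0\}$ viewed as a surface, since by flatness its Hilbert polynomial equals that of $X_0$ up to the degree shift). The Hilbert function of $\AAA_X$ is, by flatness, that of $\CC[x_1]\otimes R(C,2Q)$ with $x_1$ in degree $1$; summing the known values $h^0(C,2kQ)$ from the proof of Proposition \ref{rc2p} yields $h^0(X,\sO_X(k))=k^2-2k+5$ for $k\geq3$, $h^0(\sO_X(1))=2$, $h^0(\sO_X(2))=4$, matching Lemma \ref{lispencil}. By adjunction/the dualizing sheaf of a projectively normal surface in this weighted space one gets $\omega_X=\sO_X(2)$ (the sum of the weights minus the degrees of a "virtual complete intersection" presentation, or more safely: $X_0$ is the cone over a curve $C$ with $\sO_C(Q)$ of degree $1$ and $2Q\equiv K_C$ would be the wrong count — instead I use that $X$ is Gorenstein because $\AAA_X$ is, the ideal being generated by pfaffians of a skew matrix, a codimension-three Gorenstein ideal by Buchsbaum--Eisenbud, and the canonical module is then $\AAA_X$ shifted, the shift being read off as $2$). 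Hence $K_X^2=8$, $p_g=4$, $q=0$ once we know the singularities are Du Val.

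For the general-choice statement I would argue: the locus in the parameter space of $(\sA,\sB,\sD)$ where $X$ has worse-than-rational-double-point singularities is closed, so it suffices to exhibit one $(\sA,\sB,\sD)$ where $X$ is normal with only rational double points; then for general parameters the same holds, $X$ is the canonical model of its minimal resolution $S$, which is an even surface of general type (evenness: $\sO_X(2)=K_X$ is Cartier and $\sO_X(3)$ is Cartier because $\{x_1=x_2=0\}\cap X$ and $\{x_1=x_2=y=0\}\cap X$ avoid the offending quotient singularities exactly as in the proof of Theorem \ref{fablemma}, so $\sO_X(1)$ is Cartier and $L:=\sO_X(1)$ is the half-canonical class). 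That $|K_X|$ is \emph{not} base point free follows because the base locus equation forces $X\cap\{x_1=x_2=y=0\}\neq\emptyset$: indeed with $\sA,\sB,\sD$ vanishing on $(x_1,x_2,y)$ the pfaffians $F_7,F_8,F_9$ lie in $(x_1,x_2,y)$, so the point $[0:0:0:0:0:0:1]$ lies on $X$, and one checks it is a base point of $|K_X|$; this is the qualitative distinction from Oliverio's family. Finally, for the dimension count: the parameters $\sA,\sB,\sD$ contribute $\dim H^0(\PP,\sO(5))+\dim H^0(\sO(6))+\dim H^0(\sO(7))$, one subtracts the dimension of the coordinate changes preserving the format (the stabilizer of the shape of $\sN$ inside $\Aut\PP(1^2,2,3^2,4,5)$, which one computes to be $8$-dimensional), and one must also subtract the redundancy coming from the freedom in lifting, i.e.\ adding to $\sA,\sB,\sD$ multiples of the pfaffian relations that do not change $X$; the net count gives $35$. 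Unirationality is clear since the parameter space is a Zariski-open subset of an affine space mapping dominantly to $M_{\sE}$.

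\textbf{Main obstacle.} The delicate point is not the flatness or the numerical invariants — those follow mechanically from the format and Corollary \ref{flexibility} — but rather \emph{pinning down the dimension $35$} correctly: one has to identify precisely which deformations of $(\sA,\sB,\sD)$ produce isomorphic (or projectively equivalent) surfaces, i.e.\ compute the stabilizer of the extrasymmetric shape and the lifting redundancy, and separately verify that the resulting family dominates a $35$-dimensional locus rather than something smaller (no unexpected collapsing). Exhibiting a single member with only Du Val singularities, to anchor the "general choice" claim, is the other point requiring an explicit (but routine) check.
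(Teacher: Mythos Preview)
The flatness-and-invariants skeleton via Corollary \ref{flexibility} is right and matches the paper, but two of your four substantive claims have real gaps. Your base-point argument is simply wrong: you assert that $\sA,\sB,\sD$ lie in the ideal $(x_1,x_2,y)$, but $\sA$ (degree $5$) generically contains a $v$-term, and then at $[0{:}0{:}0{:}0{:}0{:}0{:}1]$ one has $F_9=v\sA-u\sB+z_1\sD\neq0$, so that point is \emph{not} on $X$. The paper instead restricts to the $2$-plane $\{x_1=x_2=y=z_1=0\}$, where the nine pfaffians collapse to the three $2\times2$ minors of $\bigl(\begin{smallmatrix} z_2&u&\sA\\ u&v&\sB\end{smallmatrix}\bigr)$, and the rank-$\leq1$ locus of a $2\times3$ matrix on a surface is nonempty for cohomological (degree) reasons. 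Incidentally, $X_0=\Proj R(C,2Q)$ is the curve $C$ itself, not a cone, and $X$ is a surface, not a threefold; your talk of slicing $X$ by $\{x_1=c\}$ is therefore spurious.

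Your dimension count is not a count: you neither compute the parameter spaces for $\sA,\sB,\sD$, nor the stabilizer (the ``$8$'' is unsupported), nor the ``lifting redundancy''; these direct subtractions are genuinely delicate here. The paper bypasses this bookkeeping entirely: using $z_2=y^2/x_2$, $u=yz_1/x_2$, $v=z_1^2/x_2$ it projects $X$ birationally to a degree-$10$ surface in $\PP(1^2,2,3)$ whose equation is general in an explicit $8$-generator ideal ($47$ free coefficients, hence a $46$-dimensional family of surfaces), and then subtracts the $11$-dimensional isotropy group of the point $[1,0,0,0]$ to obtain $46-11=35$. For the singularities the paper is also more informative than ``exhibit one example'': every such $X$ lies in the fixed threefold cone over the Veronese image of $\PP(1,3,5)$, whose unique non-quasismooth point $[1,0,\dots,0]$ is also the sole base point of the moving system; Bertini handles the complement, and one checks directly that this point is an ordinary node of $X$ whenever the coefficient of $x_1^7$ in $\sD$ is nonzero.
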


\begin{proof} Varying $\sA$, $\sB$ and $\sD$ we let $X$ move in a fixed 
threefold,  the cone over the weighted Veronese surface (its ideal is generated
by  the $2 \times 2$ minors 
of the $3
\times 3$ submatrix on the  top-right corner of $\sN$) which has a 
single point which is not quasismooth, with coordinates $[1,0,0,0,0,0,0]$. The 
same point is
also the only base point of the linear system. By Bertini's theorem, the general surface $X$ 
is singular away from that point. If the coefficient of $x_1^7$ in 
$\sD$ does not
vanish, then this point is a node of $X$.

By Corollary \ref{flexibility} and its proof there is a Gorenstein 
symmetric free resolution of the  ideal of $X$ which lifts a 
resolution of the
ideal of the curve $C=\{x_1=0\}\cap X$ in $\PP(1,2,3^2,4,5)$: both 
are images by a suitable ring map of the Gorenstein symmetric 
resolution of
$\sI_{\sE}$. Since $K_C=\sO_C(3)$, then
$K_X=\sO_S(2)$, and it follows immediately that  the invariants are as stated.

We show that the canonical system of $S$ has a base point. Indeed, in 
the 2-plane $\{x_1=x_2=y=z_1=0\}$ the equations reduce to asking that 
the rank
of the matrix
$$
\begin{pmatrix} z_2&u&\sA\\ u&v&\sB\\
\end{pmatrix}
$$ is not $2$. Such  a determinantal condition defines a locus of 
codimension at most $2$, and with non-trivial cohomology class, hence not empty.

To compute the dimension of the family, we note that, on $X$, 
$z_2=\frac{y^2}{x_2}$, $u=\frac{yz_1}{x_2}$,
$v=\frac{z_1^2}{x_2}$. Then, forgetting the variables $z_2,u$ and $v$, we get a projection map
$\pi \colon X \dashrightarrow \PP(1^2,2,3)$ which is birational onto 
its image, a surface $Y$ of degree
$10$ whose equation  is general in the ideal
\begin{equation}\label{I10} (y^5, x_2y^3, x_2y^2z_1, x_2yz_1^2, 
x_2z_1^3, x_2^2y, x_2^2z_1, x_2^3).
\end{equation} These surfaces $Y$ belong to a family depending on $47$ free parameters, so $Y = 
\pi(X)$ varies in
a 
$46$ dimensional family. We have to subtract from this dimension  the dimension of 
the subgroup
of  $\Aut \PP(1^2,2,3)$ preserving the ideal (\ref{I10}).

Note that $Y = \pi(X)$ has a point of multiplicity $ \geq 3$ at the point $p$ of coordinates
$[x_1,x_2,y,z_1]=[1,0,0,0]$.

The subgroup of automorphisms of $\Aut \PP(1^2,2,3)$ 
preserving the ideal (\ref{I10})  is  exactly, as one can verify, the isotropy group of $p$, a 
group of
dimension $11$. Finally we obtain $46-11=35$.
\end{proof}

\subsection{The $MV$ format}

Consider a $5 \times 5$ skewsymmetric matrix $\tilde M$ and a vector 
$\tilde V$ as follows

\[\tilde M=\left(
  \begin{matrix}
    & m_{12} & m_{13} & m_{14} & m_{15} \\
    &  & m_{23} & m_{24} & m_{25} \\
   & &  & m_{34} & m_{35} \\
   &  &  &  & m_{45}\\
   &&&&&
\end{matrix} \right),\ \ \ \tilde V=\left(
  \begin{matrix} v_1 \\ v_2 \\ v_3 \\ v_4\\ v_5
\end{matrix} \right),
\] and let $\sI_{\sV} \subset {\tilde 
\AAA}_{\sV}:=\CC[m_{12},\ldots,m_{45},v_1,\ldots,v_5]$ be the ideal 
generated by the $4 \times 4$ pfaffians of
$\tilde M$, and by the entries of $\tilde M \tilde V$.

This gives $10$ polynomials, which form a minimal system of 
generators of $\sI_{\sV}$:
\begin{align*}
  g_1:&-m_{23}m_{45} + m_{24}m_{35} - m_{34}m_{25} \\
  g_2:& m_{13}m_{45} - m_{14}m_{35} + m_{34}m_{15}\\
  g_3:& -m_{12}m_{45} + m_{14}m_{25} - m_{24}m_{15}\\
  g_4:& m_{12}m_{35} - m_{13}m_{25} + m_{23}m_{15}\\
  g_5:& -m_{12}m_{34} + m_{13}m_{24} - m_{23}m_{14}\\
  g_6:& m_{12}v_2 + m_{13}v_3 + m_{14} v_4 + m_{15}v_5\\
  g_7:& m_{12}v_1 - m_{23}v_3 - m_{24} v_4 - m_{25}v_5\\
  g_8:& m_{13}v_1 + m_{23}v_2 - m_{34} v_4 - m_{35}v_5\\
  g_9:& m_{14}v_1 + m_{24}v_2 + m_{34} v_3 - m_{45}v_5\\
 g_{10}:& 
m_{15}v_1 + m_{25}v_2 + m_{35} v_3 + m_{45}v_4
\end{align*}
yoked (c.f. \cite[pages 20 and 21]{CR} ) by $16$ independent first syzygies (i.e., relations):
\begin{eqnarray*}\label{syzMV} 
&m_{12}g_2+  m_{13}g_3+  m_{14}g_4+  m_{15}g_5=0\\ &-m_{12}g_1+ 
m_{23}g_3+  m_{24}g_4+  m_{25}g_5=0\\ &-m_{13}g_1
-m_{23}g_2+ 
m_{34}g_4+  m_{35}g_5=0\\ &-m_{14}g_1 -m_{24}g_2 -m_{34}g_3+ 
m_{45}g_5=0\\ &-m_{15}g_1 -m_{25}g_2 -m_{35}g_3 -m_{45}g_4=0\\ 
&v_5g_4   
-v_4g_5 -m_{23}g_6+  m_{13}g_7 -m_{12}g_8=0\\ &-v_5g_3+ 
v_3g_5 -m_{24}g_6+  m_{14}g_7-m_{12}g_9=0\\ &-v_5g_2+v_2g_5+ 
m_{34}g_6-m_{14}g_8+ 
m_{13}g_9=0\\ & 
-v_5g_1+v_1g_5-m_{34}g_7+m_{24}g_8-m_{23}g_9=0\\ 
&v_4g_3-v_3g_4-m_{25}g_6+m_{15}g_7-m_{12} g_{10}=0\\ &-v_3g_2+ 
v_2g_3+m_{45}g_6-m_{15}g_9+  m_{14} g_{10}=0\\ & 
v_4g_1-v_1g_4-m_{35}g_7+m_{25}g_8-m_{23} g_{10}=0\\ &   -v_3g_1+ 
v_1g_3-m_{45}g_7+m_{25}g_9-m_{24}
g_{10}=0\\ &    v_2g_1+ 
-v_1g_2-m_{45}g_8+  m_{35}g_9-m_{34} g_{10}=0\\ 
&v_4g_2-v_2g_4+m_{35}g_6-m_{15}g_8+m_{13} g_{10}=0\\ &v_1g_6+ 
v_2g_7+     v_3g_8+     v_4g_9+     v_5 g_{10}=0\\
\end{eqnarray*}

\begin{remark}\label{rm: cod5} In the previous cases we had a 
codimension $4$ Gorenstein subscheme of a weighted projective space 
defined by an ideal
with $9$ generators: the ideals $I \subset\AAA$ 
and $\sI_{\sE} \subset \AAA_{\sE}$.

Here we need $10$ generators. 
Moreover, the locus has  codimension $5$: indeed, the $5$ 
pfaffians  describe a codimension $3$ Gorenstein
subscheme, and
 at 
the general point of it $\tilde M$ has rank $2$, so the latter $5$ 
polynomials give locally just two conditions. 

The important point 
for us is that the  number of first syzygies  is $16$, as in the previous cases.

\end{remark}

\begin{definition} Let $\AAA$ be any weighted 
polynomial ring, consider a ring homomorphism 
$\varphi \colon 
{\tilde \AAA}_{\sV} \rightarrow \AAA$, and set $M:=\varphi(\tilde 
M)$, $V=\varphi(\tilde V)$.  Let $I$ be the ideal generated 
by
$\varphi(\sI_{\sV})$; $I$ is generated by the $4 \times 4$ 
pfaffians of $M$ and by $MV=0$. In this situation we will say that 
$(M,V)$ is an $MV$
format for the quotient ring 
$\AAA/I$.
\end{definition}

\begin{example}\label{ex: MV} We 
write an MV format for our ring $R$. 

Again we choose the graded ring 
$\AAA=\CC[x_2,y,z_1,z_2,u,v]$.  By  Proposition
\ref{rc2p} we can 
assume $A=v$. Then the pair  of matrices $(M,V)$, where $M$ is antisymmetric and $V$
is a vector, and with
\[M=\left(
 \begin{matrix}
& v & u & z_2 & D \\
   &  & z_1 & y & B \\
  & &  & 0 & v \\
  &  & 
&  & u\\
  &&&&&
\end{matrix} \right),\ \ \ V=\left(
 \begin{matrix} 
x_2 \\ -y \\ z_1\\ 0\\ 0
\end{matrix} \right),
\] is an $MV$ format 
for $R$.

Indeed if we compute the image of the $10$ generators of 
$\sI_{\sV}$ we get exactly (up to a sign) the polynomials 
$f_i$: 
$\varphi(g_1)=f_5$, $\varphi(g_2)=-f_6$, $\varphi(g_3)=-f_8$, 
$\varphi(g_4)=f_9$, 
$\varphi(g_5)=f_3$, $\varphi(g_6)=-f_5$, 
$\varphi(g_7)=-f_4$, $\varphi(g_8)=-f_2$, $\varphi(g_9)=-f_1$, 
$\varphi(g_{10})=-f_7$. The polynomial which
we obtain twice is 
$f_5$, which is produced twice by $M_1$,  the first row of $M$: 
$-f_5$ equals both $M_1V$ ($=\varphi(g_6)$) and the pfaffian of 
$M$
which ignores it ($-\varphi(g_1)$). 

\end{example}

\begin{lemma} The map $\varphi\colon {\tilde 
\AAA}_{\sV} \rightarrow \AAA$ given by  Example \ref{ex: MV} maps 
the $16$ relations among the $10$
generators  of $\sI_{\sV}$ onto a 
generating system of the relations between 
$f_1,\ldots,f_9$.
\end{lemma}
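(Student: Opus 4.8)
The plan is to parallel the treatment of the extrasymmetric format, the one genuinely new point being that, unlike there, the specialization $\varphi$ of Example \ref{ex: MV} fails to be injective on the ten generators $g_1,\dots,g_{10}$ of $\sI_{\sV}$: one has $\varphi(g_1)=f_5=-\varphi(g_6)$, while $\varphi$ carries the remaining eight of the $g_i$, up to sign, bijectively onto $f_1,\dots,f_4,f_6,\dots,f_9$. First I would record that applying the graded ring homomorphism $\varphi$ to each of the sixteen relations $\rho_1,\dots,\rho_{16}$ among $g_1,\dots,g_{10}$ listed above yields a relation $w_j:=\varphi(\rho_j)$ among $f_1,\dots,f_9$; since no $\rho_j$ involves both $g_1$ and $g_6$, the two avatars $\pm f_5$ never have to be recombined, and each $w_j$ is literally a homogeneous syzygy of $f_1,\dots,f_9$ of internal degree $\deg\rho_j$ — the degree being taken for the grading on ${\tilde\AAA}_{\sV}$ that makes $\varphi$ graded, which is non-standard (for instance $\deg v_5=0$, $\deg m_{34}=1$, $\deg v_1=1$, $\deg v_4=4$, the degrees of the variables killed by $\varphi$ being forced by homogeneity of the $g_i$).

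Next I would compute that the multiset $\{\deg\rho_j\}_{j=1}^{16}$ equals $\{7,8,8,9,9,10,10,10,11,11,11,12,12,13,13,14\}$, which is exactly the multiset of degrees of the sixteen generators $\sigma_1,\dots,\sigma_{16}$ of $\mathrm{Syz}(f_1,\dots,f_9)$ produced by Corollary \ref{syzygy}. Since those $\sigma_k$ are moreover a \emph{minimal} system of generators, the graded vector space $\mathrm{Syz}(f_1,\dots,f_9)/\mathfrak m\,\mathrm{Syz}(f_1,\dots,f_9)$, with $\mathfrak m=(x_2,y,z_1,z_2,u,v)$, has dimension $1,2,2,3,3,2,2,1$ in the respective degrees $7,\dots,14$. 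By graded Nakayama it therefore suffices to check that, in each of these eight degrees $d$, the $w_j$ of degree $d$ are $\CC$-linearly independent modulo $\mathfrak m\,\mathrm{Syz}(f_1,\dots,f_9)$; given the matching cardinalities, ``independent'' is equivalent to ``a basis of the degree-$d$ part of the minimal generators''. I would carry this out by performing the sixteen substitutions and expressing each $w_j$ through the $\sigma_k$: after substituting $A=v$ (permitted by Example \ref{ex: MV}) one finds, for example, $w_9=-\sigma_1$ in degree $7$; $w_8=\sigma_2$ and $w_{16}=\sigma_3$ in degree $8$; $w_7=\sigma_4$ and $w_{14}=-\sigma_5$ in degree $9$; $w_6=\sigma_6$, $w_4=-\sigma_7$ and $w_{13}=-\sigma_6-\sigma_9$ in degree $10$; $w_1=\sigma_{14}$ in degree $13$; and analogously for the remaining ones. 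In each degree the transition matrix from $\{w_j\}$ to $\{\sigma_k\}$ turns out to be an invertible matrix of constants, which proves the assertion.

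The step I expect to be the main obstacle is precisely this final verification — writing out all sixteen specializations correctly and reading off the invertibility of the eight small transition matrices, together with the sign and degree bookkeeping it requires (the grading on ${\tilde\AAA}_{\sV}$ having entries in degrees $0$ and $1$ already makes the degree count of the second step a little delicate). I also want to stress why the homological shortcut of the extrasymmetric case is \emph{not} available: there Lemma \ref{tor} gave $\mathrm{Tor}_1^{{\tilde\AAA}_{\sE}}(\AAA,\tilde R)=0$ because $\ker\varphi$ is generated by a regular sequence cutting $\mathrm{Spec}(R)$ out of $\mathrm{Spec}(\tilde R)$ in exactly the expected codimension, whence the sixteen format syzygies descend to a generating set; here $\ker\varphi$ needs nine generators while $\mathrm{Spec}(R)$ has codimension only eight inside $\mathrm{Spec}({\tilde\AAA}_{\sV}/\sI_{\sV})$ (cf.\ Remark \ref{rm: cod5}), so $\mathrm{Tor}_1^{{\tilde\AAA}_{\sV}}(\AAA,{\tilde\AAA}_{\sV}/\sI_{\sV})$ does not vanish — it is the cyclic module accounting for the single extra relation $\varphi(g_1)+\varphi(g_6)=0$. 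This is why ``the sixteen format syzygies lift automatically'' is not by itself enough, and one genuinely has to verify by hand that, once the harmless identification $\varphi(g_1)=-\varphi(g_6)=f_5$ is built in, the result is all of $\mathrm{Syz}(f_1,\dots,f_9)$.
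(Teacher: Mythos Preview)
Your proposal is correct and follows essentially the same approach as the paper, whose proof is simply the one-line assertion that this is ``a straightforward computation, comparing the images of these relations with the relations in Corollary~\ref{syzygy}.'' The degree-matching and graded-Nakayama framework you supply is a clean way to organize that verification, and your sample identities ($w_9=-\sigma_1$, $w_6=\sigma_6$, $w_{13}=-(\sigma_6+\sigma_9)$, $w_1=\sigma_{14}$, etc.) are all correct under the substitution $A=v$.
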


\begin{proof} This is a straightforward 
computation, comparing the images of these relations with the relations 
in Corollary  \ref{syzygy}.
\end{proof}

In this case a general lift 
of $\varphi$ will not produce a flat family, because the  ideal of 
the general fibre will need $10$ generators. Still, a
useful weaker 
statement holds.

\begin{corollary}\label{semiflexibility} Consider 
the map $\varphi \colon \tilde  \AAA_{\sV} \rightarrow \AAA$ in 
Example \ref{ex: MV}, a surjective 
ring homomorphism $\pi \colon \BB 
\rightarrow \AAA$, and a ring homomorphism $\psi\colon \AAA_{\sV} 
\rightarrow \BB$  such that $\varphi=\pi \circ
\psi$ ({\it i.e. 
``$\psi$ lifts $\varphi$"}). Assume moreover 
$\psi(g_6)=-\psi(g_1)$, 
{\it i.e.} that $\psi(\tilde{M}_1\tilde{V})$ equals the image by 
$\psi$ of the  pfaffian obtained by deleting the first row 
and column of
$\tilde{M}$.

Then $\{\psi(g_i)\}$ has cardinality $9$: denote its 
elements by 
$\pm F_1,\ldots,\pm F_9$ so that $\pi(F_i)=f_i$ (so, 
{\it e.g}, $F_6=-\varphi(g_2)$).

Then every relation among the 
$f_i$ lifts  to a relation among the 
$F_i$.
\end{corollary}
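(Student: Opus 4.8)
The plan is to mimic the proof of Corollary~\ref{flexibility}, the only new feature being that we must absorb the redundancy caused by having $10$ universal generators $g_1,\dots,g_{10}$ instead of $9$.

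First I would make the bookkeeping of Example~\ref{ex: MV} completely explicit. Each of the $16$ relations displayed above is an identity $\tilde\sigma_k=\sum_{i=1}^{10}c_{ki}\,g_i=0$ holding in $\tilde\AAA_\sV$, the $c_{ki}$ being entries of $\tilde M$ and $\tilde V$. Applying $\varphi$ and using the dictionary of Example~\ref{ex: MV}, namely $\varphi(g_1)=f_5=-\varphi(g_6)$ while $\varphi$ sends each of the remaining eight $g_j$ to $\pm f_i$ for a suitable $i$, one collects the two contributions coming from $g_1$ and $g_6$ and reads $\varphi(\tilde\sigma_k)$ as a genuine relation $\sum_{j=1}^9 c'_{kj}f_j=0$ among $f_1,\dots,f_9$; by the Lemma immediately preceding this corollary, the $16$ relations produced this way generate the first syzygy module of $(f_1,\dots,f_9)$ over $\AAA$.

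Next I would apply $\psi$, rather than $\varphi$, to the same identity, obtaining $\sum_{i=1}^{10}\psi(c_{ki})\,\psi(g_i)=0$ in $\BB$. This is where the extra hypothesis $\psi(g_6)=-\psi(g_1)$ enters: it allows one to collect the $g_1$- and $g_6$-terms exactly as downstairs, turning $\psi(\tilde\sigma_k)$ into a relation $\sum_{j=1}^9 \widehat{c}_{kj}F_j=0$ among $F_1,\dots,F_9$ over $\BB$. Since $\pi\circ\psi=\varphi$, $\pi(F_j)=f_j$, and the collecting of terms is compatible with $\pi$, the image of this relation under $\pi$ is precisely the $k$-th of the generating relations among the $f_j$. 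Hence all $16$ generators of the syzygy module lift.

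To finish, write an arbitrary relation as $\sigma=\sum_k a_k\sigma_k$ with $a_k\in\AAA$ and the $\sigma_k$ the generating relations just lifted to $\widehat\sigma_k$; choosing any $\widehat{a}_k\in\BB$ with $\pi(\widehat{a}_k)=a_k$, the element $\widehat\sigma=\sum_k \widehat{a}_k\,\widehat\sigma_k$ is a relation among $F_1,\dots,F_9$ over $\BB$ with $\pi(\widehat\sigma)=\sigma$. The only point that needs genuine care is the first one: one must check that the identification $\psi(g_6)=-\psi(g_1)$ really permits collecting terms in a manner simultaneously compatible with $\varphi$, with $\psi$, and with the preceding Lemma. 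This, however, is a mechanical check against the explicit matrices of Example~\ref{ex: MV}, and no new obstruction can appear because, as stressed in Remark~\ref{rm: cod5}, the number of first syzygies is still $16$, exactly as in the extrasymmetric case.
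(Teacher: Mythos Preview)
Your argument is correct and follows essentially the same line as the paper's proof: apply $\psi$ to the $16$ universal relations among $g_1,\dots,g_{10}$, use the hypothesis $\psi(g_6)=-\psi(g_1)$ to collapse them to relations among the nine $F_j$, observe via $\pi\circ\psi=\varphi$ that these reduce to the generating relations among the $f_j$ (the preceding Lemma), and then lift arbitrary relations by lifting coefficients. The paper's proof is more terse, but the content is the same.
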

\begin{proof}
$\phi$ maps the $16$ generating 
relations among the $g_i$ to a set of generating relations among the 
$f_i$; $\psi$ maps the same relations to relations among their lifts, the   $F_i$. Since a 
generating system of relations lift, every relation
does.
\end{proof}

This format produces naturally a 
family of surfaces.
\begin{proposition}\label{MV} Consider, in the 
graded polynomial ring $\CC[x_1,x_2,y,z_1,z_2,u,v]$ with weights 
$(1,1,2,3,3,4,5)$,   a number $c_0 \in \CC$, three general homogeneous
polynomials $\sD$, 
$\sB$, and $l$ of respective degrees $7$, $6$ and $1$ of the 
form
\[l=c_1x_1+c_2 
x_2
\]
\[\sB=v\sB_v+u\sB_u+z_2\sB_{z_2}+z_1\sB_{z_1}+y\sB_y+\sB_x,
\]
\[\sD=v\sD_v+u\sD_u+z_2\sD_{z_2}+z_1\sD_{z_1}+y\sD_y+\sD_x.
\]

Then 
consider the pair  of matrices $(\sM,\sV)$ (cf.  Example \ref{ex: MV}) with
\[\sM=\left(
 \begin{matrix}
   & v & 
u & z_2 & \sD \\
   &  & z_1 & y & \sB \\
  & &  & l & 
v+l\sB_y-c_0\sD_y \\
  &  &  &  & u-l\sB_{z_1}+c_0\sD_{z_1}\\
 
&&&&&
\end{matrix} \right),\ \ \ \sV=\left(
 \begin{matrix} x_2 \\ 
-y+l\sB_v-c_0\sD_v \\ z_1+l\sB_u-c_0\sD_u\\ l\sB_{z_2}-c_0\sD_{z_2}\\ 
c_0
\end{matrix} \right),
\]

Assume moreover that  $c_0\sD_x=l\sB_x$.

Let $X \subset \PP(1^2,2,3^2,4,5)$ be the zero locus of the ideal 
generated by the $4 \times 4$ pfaffians of 
$\sM$ and by the entries of 
$\sM\sV$.  We get in this way a reducible family of surfaces with 
reducible base $\sT$. The open subset $\{c_0 \neq 0\} \subset \sT$ is irreducible, as well as its closure $\sT_1$. 

Then, for  a general choice of $\sD$, $\sB$,  $l$ and $c_0$ in $\sT_1$, $X$ is a 
surface with at  worse Du Val
singularities (rational double points). If $X$ has Du Val singularities, then $X$ is the canonical model
of a surface of general type and, if $S$ is the
minimal model of
$X$, then 
 $K^2_S=8$, $p_g(S)=4$, $q=0$, $K_S=\sO_S(2)$, and $S$ is an
even surface. 

 The case $c_0 \neq 0$ gives exactly all the surfaces 
with  base
point free canonical system, described in Theorem 
\ref{fablemma}. $\sT_1$ gives a 
$35$-dimensional irreducible locally closed set $M_{\sF}$, in the
moduli space of 
 surfaces of  general type, which contains the set of Oliverio surfaces. 
Moreover $M_{\sF} \cap M_{\sE}$ is irreducible  of 
dimension
$34$.
\end{proposition}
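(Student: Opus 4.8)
The plan is to run the same machinery as in Proposition \ref{ME}, but now using the $MV$ format and Corollary \ref{semiflexibility} in place of Corollary \ref{flexibility}. First I would observe that the matrices $(\sM,\sV)$ specialize, at $l=0$, $c_0$ arbitrary, and at the other extreme, to the format of Example \ref{ex: MV} (with $A=v$), and that the hypotheses $c_0\sD_x=l\sB_x$ and the prescribed form of the bottom entries are exactly the conditions needed for $\psi(g_6)=-\psi(g_1)$; hence by Corollary \ref{semiflexibility} the $10$ pfaffians-plus-$\sM\sV$-entries reduce to $9$ polynomials $F_1,\dots,F_9$ lifting $f_1,\dots,f_9$, and all $16$ syzygies lift. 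Since $\mathrm{depth}\,\AAA/I\ge 2$ for the graded ring $R=R(C,2Q)$ (it is Cohen–Macaulay of dimension $2$, as noted after Proposition \ref{rc2p}), Theorem \ref{gradedringdeformation} and the hyperplane section principle \cite[Prop.~1.2]{R1} give that the family of schemes $X\subset\PP(1^2,2,3^2,4,5)$ so obtained is flat over $\sT_1$ with general fibre the canonical model of a surface of general type, the hyperplane section $\{x_1=0\}\cap X$ being the curve $C$ with $K_C=\sO_C(3)$. From $K_C=\sO_C(3)$ and $h^0(\sO_X(1))=2$ one deduces $K_X=\sO_X(2)$, and then $K_S^2=8$, $p_g=4$, $q=0$ follow as in the proof of Proposition \ref{ME}; evenness is the content of $2L\equiv K_X$ being Cartier, which holds because $X$ has at worst Du Val singularities (here we are on the open locus where this holds, which is dense in $\sT_1$ by openness of the Du Val condition together with the fact that the generic member is a small deformation of a reduced canonical model).

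The second task is the identification with Oliverio's surfaces when $c_0\ne 0$. I would rescale so that $c_0=1$ and eliminate: the entry $\sV_5=c_0$ being a unit means the relation $\sM\sV=0$ can be used to solve for the variables $u,v$ (and the auxiliary ones) in terms of $x_1,x_2,y,z_1,z_2$, exactly as in the proof of Theorem \ref{fablemma} one eliminates $z_2,u,v$. Concretely, the bottom row and the structure of $\sM$ and $\sV$ are rigged so that after this elimination the $4\times4$ pfaffians collapse to two sextic equations in $x_1,x_2,y,z_1,z_2$ of the shape $(**)$ of Theorem \ref{fablemma}, namely $z_1^2+z_2A+F=0$, $z_2^2+z_1A'+F'=0$ with $A=v\mapsto$ (the appropriate degree-$5$ form) and $A'$ likewise nonzero for generic parameters; hence $X$ is a $(6,6)$-complete intersection in $\PP(1^2,2,3^2)$. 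Conversely every such complete intersection arises (count: the parameters $\sD,\sB,l,c_0$ in the $c_0\ne0$ chart, modulo the automorphisms of $\PP(1^2,2,3^2,4,5)$ acting on the format, surject onto the $35$-dimensional family of Theorem \ref{fablemma}); this gives $M_{\sF}$ as claimed, containing the Oliverio open set where $|K_S|$ is moreover base point free. The dimension count is done as in Theorem \ref{fablemma}: $6$ affine parameters for $A,A'$, $15$ projective ones for $F,F'$, minus $\dim\Aut\PP(1,1,2)=7$, giving $35$; alternatively one invokes \cite[Corollary 5.3]{O}.

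Finally, for the intersection $M_{\sF}\cap M_{\sE}$: the overlap corresponds to the boundary stratum $\{c_0=0\}$ inside the irreducible $\sT_1$ — there $\sV_5$ vanishes and the $MV$ format degenerates to one producing surfaces with a base point in $|K_S|$, i.e. landing in $M_{\sE}$; symmetrically, inside the family of Proposition \ref{ME} the extrasymmetric format with vanishing $x_1^7$-coefficient of $\sD$ removed gives the same stratum. Concretely I would show this locus is cut out by a single equation (the vanishing of $c_0$, a section of a line bundle on the parameter space which is not identically zero), whence it is a divisor in the $35$-dimensional $\sT_1$; irreducibility of the intersection then follows because $\{c_0=0\}\subset\sT_1$ is irreducible (the closure of an open set of it is the $\sT$-component described) and the map to the moduli space is dominant onto its image with irreducible general fibre. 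The main obstacle is the elimination/normal-form computation in the $c_0\ne 0$ chart: one must verify that the specific, somewhat baroque entries of $(\sM,\sV)$ — in particular the terms $l\sB_y-c_0\sD_y$, $u-l\sB_{z_1}+c_0\sD_{z_1}$, and the constraint $c_0\sD_x=l\sB_x$ — are precisely what makes the ten generators collapse to two equations of Horikawa–Oliverio type $(**)$ and not something with extra components; this is a determinantal manipulation that is routine in principle but delicate to carry out, and it is where the bulk of the work lies. Openness of $M_{\sF}$ in the moduli space, and that it is a genuine irreducible component, then follows from Theorem \ref{fablemma} together with the fact (recalled in Section \ref{subschemedeformationsection}) that the Hilbert scheme locally dominates the Kuranishi space of $X$.
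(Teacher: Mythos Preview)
Your overall strategy matches the paper's: use Corollary~\ref{semiflexibility} to get flatness over $\sT_1$, then for $c_0\neq 0$ eliminate $u,v$ to land on $(6,6)$-complete intersections in $\PP(1^2,2,3^2)$ and invoke Theorem~\ref{fablemma}. That part is fine in outline.

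There is, however, a real gap in your treatment of $M_{\sF}\cap M_{\sE}$. You identify the overlap with the boundary $\{c_0=0\}\subset\sT_1$ and write that there ``the $MV$ format degenerates to one producing surfaces with a base point in $|K_S|$, i.e.\ landing in $M_{\sE}$''. Having a base point does \emph{not} imply membership in $M_{\sE}$; that implication is precisely the main theorem of the paper, not yet available here. The paper instead argues as follows: it restricts to the sublocus $\{c_0=l=0\}\cap\sT_1$, and observes that there the $MV$ format literally becomes the extrasymmetric format of Proposition~\ref{ME} with $\sA=v$, so these surfaces lie in $M_{\sE}$ by construction. For the reverse inclusion one must show that any $X\in M_{\sF}\cap M_{\sE}$ can be brought to $c_0=l=0$: the paper does this by noting that base points force $c_0=0$, that the extrasymmetric shape of the degree~$4$ relation ($WZ-Y^2$) forces $c_1=0$, and then a coordinate change kills $c_2$. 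Your divisor argument (``$c_0=0$ is one equation, hence codimension $1$'') is about the parameter space, not the moduli space, and does not control the fibre dimension of the map to moduli; the paper instead computes $\dim M_{\sE\sF}=34$ directly by projecting birationally to $\PP(1^2,2,3)$ and counting parameters in an explicit ideal, exactly as in the proof of Proposition~\ref{ME}.

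Two smaller points. First, your evenness argument (``$2L\equiv K_X$ is Cartier because $X$ has Du Val singularities'') only gives that $K_X$ is Cartier, not that $L$ is; the paper gets evenness by producing a \emph{smooth} member of the family (an Oliverio surface) and using that all members are then diffeomorphic. Second, you assert the general $X$ has at worst Du Val singularities by ``openness'', but openness needs one good member to start from; the paper obtains this member from the sublocus $\{c_0=l=0\}$, which lies in $M_{\sE}$ and hence has such surfaces by Proposition~\ref{ME}.
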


\begin{remark}
First of all, we may write the polynomials $\sB_v,\dots$ uniquely if we require that
$\sB_x$ is a polynomial only in the variables $x_1,x_2$, 
$\sB_y$ is a polynomial only in the variables $x_1,x_2, y$, and so on, following
the increasing weight order $x_1,x_2,y,z_1,z_2,u,v$.

The parameter space $\sT$ is 
reducible, since in fact the equation 
$$c_0\sD_x=l\sB_x \Leftrightarrow c_0\sD_x=(c_1 x_1 + c_2 x_2) \sB_x$$
is equivalent to requiring either that $c_0 \neq 0$ and then $\sD_x = c_0^{-1}(c_1 x_1 + c_2 x_2)\sB_x $,
or that $c_0 =c_1 = c_2 = 0$, or $ c_0 = \sB_x = 0$. 

The closure of the irreducible affine set $\sT \cap \{c_0 \neq 0\}$ shall be denoted by $\sT_1$,
while $\sT_2 : = \{c_0 =c_1 = c_2 = 0\}$, $\sT_3 : = \{c_0 = \sB_x= 0\}$.

On $\sT_2$, $\sB_x $ and $\sD_x$ are arbitrary, hence $\sT_2 \cap \sT_1$ is the subset
where $\sB_x$ divides $ \sD_x$. Similarly, on 
$\sT_3$, $l  = (c_1 x_1 + c_2 x_2), \sD_x$ are arbitrary, hence $\sT_3 \cap \sT_1$ is the subset
where $l  = (c_1 x_1 + c_2 x_2)$ divides $ \sD_x$. The intersection of the three components $\sT_1 \cap \sT_2 \cap \sT_3$
is easily seen to be equal to $\{c_0 = c_1 = c_2 = \sB_x= 0\}$.

In the above theorem we consider only the first irreducible component $\sT_1$, the 
closure of $\{c_0\neq 0\}$. By the forthcoming 
 Theorem
\ref{thatsallfolks} all the other  surfaces  shall belong to
$M_{\sF} \cup M_{\sE}$.
\end{remark}

\begin{proof}  One can verify that the assumption 
$c_0\sD_x=l\sB_x$  boils down to the fact  that the two equations produced by the first 
row coincide (i.e., the pfaffian of the minor of $\sM$ where one erases the first row and column
equals the first entry of $\sM \sV$). 

Then, by  Corollary
\ref{semiflexibility} we have a flat 
family with base $\sT_1$, giving an irreducible locally closed set of the moduli space, which we 
denote by $M_\sF$. 

We consider the subset $M_{\sE\sF}$ of the 
moduli space of  surfaces of general type given by  the image of
$\{c_0=l=0\}\cap \sT_1$.

We have that  $M_{\sE\sF} \subset 
M_{\sE}$: to write a surface in
$M_{\sE\sF}$ in the format of 
Proposition \ref{ME}  it suffices to take $\sA=v$.

We deduce then 
the existence of a surface $X$ with at most rational double points as 
singularities in 
$M_{\sE\sF}$ as in the proof of Proposition 
\ref{ME}. By flatness and Proposition \ref{ME}, its minimal resolution has $K_S^2=8$, 
$p_g(S)=4$, $q(S)=0$ and $K_S$ is the pull back of $K_X=\sO_X(2)$. 

We compute the dimension of 
$M_{\sE\sF}$, forgetting the variables $z_2,u$ and $v$ and  taking the associated projection
as in the proof of 
Proposition \ref{ME}, and then using 
$z_2=\frac{y^2}{x_2}$,
$u=\frac{yz_1}{x_2}$, 
$v=\frac{z_1^2}{x_2}$. 
The image of $S$ is a surface $\Sigma$ of degree
$10$ general in the 
ideal
\begin{equation*}
\label{I10'} (y^5, x_2y^3, x_2y^2z_1, 
x_2yz_1^2, x_2z_1^3,x_2^2y^2,x_2yz_1,x_2z_1^2, x_2^2x_1^6y, 
x_2^3).
\end{equation*} 

Comparing with the ideal (\ref{I10}), we 
have only $46$ parameters: the monomial we are missing 
is
$x_2^2x_1^5z_1$. Arguing as in the proof of Proposition \ref{ME}, 
$\dim M_{\sE\sF}= 45-11=34=\dim M_{\sE}-1$.

When $c_0\neq 0$, the 
two equations of smaller degree eliminate $u$ and $v$,  embedding the 
surface as a complete intersection of type $(6,6)$ in
$\PP(1^2,2,3^2)$, so, by 
theorem \ref{fablemma},  the canonical system is base point 
free.

Conversely, all the isomorphism classes of canonical models of such complete intersection surfaces 
 are here. Indeed,   choosing for simplicity $c_0=c_1=1$, 
$c_2=0$, $\sD_x=x_1\sB_x$
(to ensure that we are in $\sT_1$) we get 
$u=-D_{z_1}+\ldots$, $v=D_y+\ldots$ and $X$ 
becomes the complete intersection in $\PP(1^2,2,3^3)$ of the two 
sextics which are obtained   eliminating $u,v$ in the sextics 
$yu-z_1z_2 -x_1v$ and $\sB+x_1(y\sB_{ z_2}-z_1\sB_{ u})+\ldots$, where 
the terms  in ``$\cdots$" do not depend on 
$\sD_y$, $\sD_{z_1}$ or 
$\sB$. It follows that we get all pencils of sextics (with base locus 
a surface with at most rational double points of
singularities) 
containing a sextic  in the ideal generated by $x_1$, $y$ and 
$z_1z_2$.  On the other hand, if 
we cannot find such a sextic in the pencil even after a projective coordinate change, then, argueing as in the proof  of Theorem \ref{fablemma}, the 
canonical system has a base point, a contradiction.

Therefore we have shown that the surfaces $X_t$ in the family $ M_\sF$
admit a smooth deformation $X_0$;  $X_0$ is an even surface, because
it is  smooth and  $\hol_X (K_{X_0}) \cong \hol_{X_0} (2)$. Hence all the minimal models
$S_t$ of our canonical models $X_t$, being diffeomorphic to $X_0$, are even surfaces.
 
By Theorem 
\ref{fablemma} the surfaces with  base point free canonical system
form a 
$35$-dimensional irreducible open set of the moduli space, 
so $\overline{M_{\sF}}$ is an irreducible  component of the moduli space,
containing the set of Oliverio surfaces.

Finally, we have already proved that 
$M_\sE \cap M_\sF$ contains the irreducible family 
$M_{\sE\sF}$ of 
dimension $34$. On the other hand, let $X$ be a surface in $M_\sF$ 
which is also in $M_\sE$. Then $K_X$ has base points, so 
$c_0=0$.
Moreover, the equation of degree $4$ must be of the form 
$WZ-Y^2$ for some forms $W,Y,Z$ of respective degree
$1,2$ and $3$: 
this forces $c_1=0$, so $l=c_2x_2$. A long but straightforward 
computation shows that we can make a coordinate change so that 
the
generators are still produced by matrices $\sM$ and $\sV$ as in the 
statement, but with $l=0$. So $X \in 
M_{\sE\sF}$.
\end{proof}

\section{Deformations of the cone and the 
moduli space}\label{sectionconedeform} 

Our next goal is to prove that 
$M_\sE$ and $M_\sF$ fill
$M_{8,4,0}^{ev}$. By  Theorem~\ref{fablemma}  and  Proposition~\ref{MV} it suffices to restrict our considerations to
surfaces $S$  in 
$M_{8,4,0}^{ev}$ such that $|K_S|=|2L|$ is not base point free. 
It will be convenient to consider only the canonical models $X$ of such surfaces,
observing that $K_X=2L$ and $R(X,L)$ is an \emph{extension 
ring} of  degree one (\cite{R1}) of the ring $R=R(C,2Q)$,
where $C$ is a general curve in the pencil $|L|$
(this simply means that $R \cong R(X,L)/ (x_1)$, where the coordinate $x_1$ has degree 1, which is   the 
algebraic counterpart of the geometric  process of taking a hyperplane section). 

First of all, if 
no nonsingularity condition is set forth,  a trivial extension ring of $R \cong \AAA /(f_1,\cdots,f_9)$, where 
$\AAA$ is the polynomial ring $\CC[x_2,y,z_1,z_2,u,v]$,
is given by the cone 
\[
 C_R 
:= Proj (\BB/(f_1,\cdots,f_9))
\]  where 
$\BB$ is the polynomial ring $\CC[x_1,x_2,y,z_1,z_2,u,v]$ 
and
$deg(x_1,x_2,y,z_1,z_2,u,v)=(1,1,2,3,3,4,5)$. 

Every extension 
ring of $R$ can be viewed as a deformation of $C_R$, since in both situations the issue is
to lift  the
same generators $f_1,\cdots,f_9$ of the graded ideal and their  first 
syzygies $\sigma_1,\cdots,\sigma_{16}$. 

Pay attention  
that the ideal $J$ of $C_R$ is different
from the one of $R$, since $J$ 
is generated by $f_1,\cdots, f_9$ in the bigger polynomial ring 
$\BB$. 

 An explicit calculation of the infinitesimal deformations 
of  $C_R$ occupies the main part of this section.

\subsection{First order deformations of 
$C_R$}\label{firstdeformationsection}
As usual, we begin by  calculating 
the space $T^1$  of  first order deformations: since we know that the Kuranishi family is parametrized by a 
complex analytic subspace
of $T^1$.

A first order  deformation 
of $C_R$ (see for instance \cite{schlessinger},  Section 1) is an element of $ Hom (J, \BB / J)$ 
and can be therefore written in  the following form: 

\begin{equation}\label{1storderdeform}
 F_i^{(1)} = f_i + 
\epsilon\cdot\sum_{k\geq0} x_1^k f_{i,k}', \ \ 1\leq i \leq 
9,
\end{equation} where $f_{i,k}'\in\AAA=\CC[x_2,y,z_1,z_2,u,v]$ 
which is viewed as a subring of the polynomial ring $\BB$; so the 
$F_i^{(1)}$'s are
elements in $\BB[\epsilon]$.

A  standard observation is that   each $f_{i,k}'$ can  be viewed as an 
element of $R$ which is a quotient of 
$\AAA$. In fact, supposing that  $F_i^{(1)}$ 
and $G_i^{(1)}$ define two first order deformations of $C_R$ and that
$F_i^{(1)}-G_i^{(1)}$ is in $\epsilon\cdot J$ for $1\leq i\leq
9$: 
then they actually define the same first order 
deformation, since $F_i^{(1)}$'s and the 
$G_i^{(1)}$'s generate the same ideal of 
$\BB[\epsilon]$.

 For $1\leq j\leq 16$, suppose the relations
(first syzygies) $\sigma_j$ are
 $\sum_{1\leq i\leq9} l_{ij} f_i=0$ (see 
Corollary \ref{syzygy}). Then the relations between the $F_i^{(1)}$ should 
be of the form 
\[
\sum_{1\leq i\leq 9} (l_{ij}+ \epsilon\cdot 
m_{ij})F_i^{(1)}=0 
\] where $m_{ij}$ is an element of $\BB$. The possibility of 
 lifting the  relations is equivalent to the condition that we get a homomorphism
 of $J$ into  $\BB / J$ and yields the exact restrictions on the $f_{i,k}'$ 
in $(\ref{1storderdeform})$.

\begin{lemma} For the first order 
deformations, it suffices to lift the following  five of the sixteen relations (first order syzygies): 
$\sigma_1,\sigma_3,\sigma_5,\sigma_9,\sigma_{10}$.
\end{lemma}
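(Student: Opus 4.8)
The plan is to show that, among the sixteen first-order syzygies $\sigma_1,\dots,\sigma_{16}$ listed in Corollary~\ref{syzygy}, all but $\sigma_1,\sigma_3,\sigma_5,\sigma_9,\sigma_{10}$ are \emph{redundant} for the purpose of computing $T^1$, in the sense that lifting these five already forces the liftability of the remaining eleven. The key structural observation is that, by Corollary~\ref{syzygy}, the $f_i$ and $\sigma_j$ are (the images of) the pfaffians and their relations coming from the extrasymmetric resolution $\tilde L_\bullet$ of $\sI_\sE$, and this resolution is the Gorenstein symmetric one of length five; in particular the module of \emph{second syzygies} is explicitly controlled. The relations among the relations (the entries of $\tilde L_2$) express each $\sigma_j$, for $j$ outside the chosen five, as an $\AAA$-linear combination of the five chosen ones together with multiples of the $f_i$ times the chosen $\sigma$'s. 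So the plan is: (1) write down, from \cite[5.5]{R1} or by direct inspection of the symmetric resolution, the second-syzygy relations, i.e. identities of the form $\sum_j a_{kj}\sigma_j = 0$ with $a_{kj}\in\AAA$; (2) check that the matrix $(a_{kj})$ restricted to the eleven ``bad'' columns is invertible over the relevant localization, equivalently that the eleven bad $\sigma_j$ lie in the $\AAA$-span of $\sigma_1,\sigma_3,\sigma_5,\sigma_9,\sigma_{10}$ modulo the trivial (Koszul) relations; (3) conclude that a first-order deformation lifting the five chosen relations automatically lifts all sixteen, because applying the second-syzygy operator to the lifted relation vectors shows the obstruction to lifting the bad ones vanishes.

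Concretely, one runs the standard deformation-theoretic bookkeeping from Section~\ref{firstdeformationsection}: a first-order deformation is $F_i^{(1)} = f_i + \epsilon\sum_{k\ge 0}x_1^k f'_{i,k}$, and liftability of $\sigma_j:\sum_i l_{ij}f_i=0$ to $\sum_i(l_{ij}+\epsilon m_{ij})F_i^{(1)}=0$ amounts to solving $\sum_i l_{ij}(\sum_k x_1^k f'_{i,k}) = \sum_i m_{ij} f_i$ in $\BB$, i.e. to the condition that $\sum_i l_{ij}(\sum_k x_1^k f'_{i,k})$ lies in the ideal $J=(f_1,\dots,f_9)\subset\BB$. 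The point is that if this membership holds for $j\in\{1,3,5,9,10\}$, then using a second-syzygy identity $\sum_j a_{kj}\sigma_j=0$ (i.e. $\sum_j a_{kj} l_{ij}\in$ Koszul relations of the $f_i$, for each $i$) one gets for any bad index $j_0$ an expression of $\sum_i l_{ij_0}(\cdots)$ as an $\AAA$-combination of the already-liftable $\sum_i l_{ij}(\cdots)$ plus a term that automatically lies in $J$; hence $\sigma_{j_0}$ lifts too. This is exactly the mechanism by which ``five relations suffice''.

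The main obstacle I expect is step (2): one must be sure that the reduction of the bad $\sigma_j$ to the good five is valid \emph{over $\AAA$} (not merely after inverting some variable), because a localization would lose control of the cone point $[1,0,\dots,0]$ and of the base-locus behaviour that the whole paper turns on. Equivalently, one needs that the chosen five rows of the first-syzygy matrix, together with the Koszul syzygies of $(f_1,\dots,f_9)$, generate the full first-syzygy module in the relevant degrees; this should follow from the Gorenstein symmetric structure of $\tilde L_\bullet$ (the resolution is self-dual, so $\tilde L_2\cong \tilde L_1^\vee$ up to a twist, and one reads off the needed combinations), but it must be verified degree-by-degree for the nine specific pfaffians $f_1,\dots,f_9$ of degrees $4,\dots,10$. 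A secondary, purely clerical obstacle is keeping track of the grading: each $\sigma_j$ has its own degree (from $7$ to $14$), so the coefficients $a_{kj}$ are homogeneous of prescribed degrees, and one must confirm the bad $\sigma_j$ really are hit — in particular $\sigma_9$ and $\sigma_{10}$ (which involve $A,B,D$) cannot be dropped, whereas $\sigma_{11},\dots,\sigma_{16}$ can, and one should display at least the key identities realizing $\sigma_2,\sigma_4$ in terms of $\sigma_1,\sigma_3,\sigma_5$ and $\sigma_{11},\dots,\sigma_{16}$ in terms of the five. Once these identities are in hand the lemma is immediate; I would present the proof as: quote the second-syzygy relations from \cite[5.5]{R1}, exhibit the relevant linear-algebra reduction, and invoke the liftability mechanism above.
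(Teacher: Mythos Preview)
Your framework is essentially right—the relevant input is indeed the second syzygies (relations among the $\sigma_j$), and the paper uses them too—but the resolution you propose for your ``main obstacle'' is not the one that works, and this is where the gap lies. You hope that the Gorenstein self-duality of $\tilde L_\bullet$ will let you express each bad $\sigma_j$ as an $\AAA$-linear combination of the five good ones modulo Koszul relations, i.e.\ without inverting anything. It does not: if you actually write down the second syzygies you will find identities such as
\[
z_1\sigma_2 \equiv u\sigma_1,\quad y\sigma_4 \equiv -z_1\sigma_2 + z_2\sigma_3,\quad x_2\sigma_8 \equiv v\sigma_1 + z_1\sigma_5 \pmod J,
\]
and similarly for $\sigma_6,\sigma_7,\sigma_{11},\dots,\sigma_{16}$, where the coefficient in front of the bad $\sigma_j$ is always one of the variables $x_2,y,z_1,z_2,u,v$, never a unit. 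So your step (2) fails as stated: the reduction genuinely requires dividing by these variables.

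The point you are missing is that this is harmless. The liftability condition for $\sigma_j$ is that the element $\sum_i l_{ij}f'_i$ lies in $J$, i.e.\ vanishes in $\BB/J$; and the variables $x_2,\dots,v$ are \emph{non-zerodivisors} in $\BB/J$ (the cone $C_R$ is an integral scheme). Hence from, say, $z_1\cdot(\text{condition for }\sigma_2)\equiv u\cdot(\text{condition for }\sigma_1)\pmod J$ and the vanishing of the right side one cancels $z_1$ and concludes the left condition vanishes too. This is exactly the paper's proof: it lists the eleven explicit congruences above (first reducing $\sigma_2,\sigma_4,\sigma_6,\sigma_7,\sigma_8$ to $\sigma_1,\sigma_3,\sigma_5$, then $\sigma_{11},\dots,\sigma_{16}$ to $\sigma_1,\dots,\sigma_{10}$) and invokes the non-zerodivisor property once. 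No appeal to the full symmetric resolution or degree-by-degree bookkeeping is needed; the argument is three lines once you have the congruences. Your detour through invertibility of a submatrix over $\AAA$ is both unnecessary and, as formulated, false.
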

\begin{proof}
 
Indeed, we have the following equivalences (mod $ J$)  between the 
first order syzygies
\begin{align*}
 z_1\sigma_2 &\equiv u \sigma_1 
& y\sigma_{11} &\equiv v\sigma_2 + z_2\sigma_9\\
 y \sigma_4 &\equiv 
-z_1 \sigma_2 + z_2 \sigma_3       & x_2\sigma_{12} &\equiv 
-(z_2^2\sigma_1 + v\sigma_3 + z_1\sigma_9 + y\sigma_{10})\\
 z_1 
\sigma_6 &\equiv v \sigma_3                       & z_1\sigma_{13} 
&\equiv v\sigma_6 + u\sigma_{10}\\
 x_2 \sigma_8 &\equiv v \sigma_1 + 
z_1 \sigma_5         & z_1\sigma_{14} &\equiv v \sigma_8 + 
u\sigma_{12} \\
 v\sigma_7 &\equiv u \sigma_8 
& z_1\sigma_{15} &\equiv z_2^2 \sigma_6 + v\sigma_{10} \\
  & 
& z_1\sigma_{16} &\equiv z_2^2\sigma_8 + v\sigma_{12}\\
\end{align*} 
Since the variables $x_2,\dots, v$ are not zero-divisors in $C_R$, the 
syzygies $\sigma_1, \sigma_3, \sigma_5$ imply $\sigma_2, 
\sigma_4,
\sigma_6, \sigma_7, \sigma_8$ mod $ J$ by the first column, 
and $\sigma_1, \dots, \sigma_{10}$ imply the remaining ones 
$\sigma_{11},\dots,\sigma_{16}$ by the
second column. 

\end{proof}

We can calculate the $f_{i,k}'$ separately, since they 
correspond to different degrees in $x_1$. Denote by $V_k$ the space of first 
order deformations having   degree
$k$ in $x_1$, and by $V_k'$ the subspace 
of $V_k$ induced by variations of  the entries of the matrix $N$ (while 
preserving the extrasymmetric format) in
Example \ref{ex: extrasym}. 
Due to the previous observations, we  see that 
 the calculations of first order 
deformations essentially take place  in the quotient ring $\BB /J$ 
.
\begin{proposition}\label{firstdeformation}
 \begin{enumerate} 
 
\item [(i)] $V_k = V_k'$ for $k\geq 2$, that is, every 1st order 
deformation of $C_R$ with degree $2$ in $x_1$ is obtained from the 
extrasymmetric
format.
  \item [(ii)] $ dim\ V_1 / V_1' = dim\ V_0 / 
V_0' =1$, that is, in degrees $1$ (resp. $0$) in $x_1$, the  first
order deformations that are 
induced by the matrix format build a subspace of codimension 1.
 
\end{enumerate}
\end{proposition}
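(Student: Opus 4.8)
The plan is to compute the graded pieces of $T^1$ degree by degree in $x_1$, by writing out the liftability conditions for the five essential syzygies $\sigma_1,\sigma_3,\sigma_5,\sigma_9,\sigma_{10}$ and solving for the admissible families $(f_{i,k}')_{i=1}^9$ inside $R=\AAA/(f_1,\dots,f_9)$. For a fixed degree $k\geq 0$ in $x_1$ the data is a tuple $f_{1,k}',\dots,f_{9,k}'\in R$ of the appropriate weighted degrees, and liftability of $\sigma_j=\sum_i l_{ij}f_i=0$ amounts to requiring that $\sum_i l_{ij}f_{i,k}'$ lie in the submodule of $R$ generated by the relations, i.e.\ that this combination be expressible in terms of the $f_i$ modulo $J$ — concretely, after substituting the monomial parametrizations $x_2=\xi^2,\ y=\xi\eta,\dots$ of $R(C,2Q)$ coming from Proposition~\ref{rc2p}, one reduces each condition to a membership question in the much simpler ring $\CC[\xi,\eta,\zeta]/(p)$ of Lemma~\ref{rcp}. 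I would first record the Hilbert function of each relevant graded piece of $R$ using Lemma~\ref{lispencil} and Lemma~\ref{rcp}, so that the dimension count is bookkeeping rather than guesswork.

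Next I would identify the ``obvious'' deformations: varying the entries $\sA,\sB,\sD$ (and, for the extrasymmetric matrix, the off-diagonal constants) produces an explicit subspace $V_k'\subset V_k$ whose dimension is easy to tabulate, since each such variation changes the nine pfaffians in a controlled way and automatically lifts all syzygies by Corollary~\ref{flexibility}. The substance of part (i) is then to show that for $k\geq 2$ the liftability equations force every admissible tuple to come from such a matrix variation; I expect this to follow because in high $x_1$-degree the weighted degrees of the $f_{i,k}'$ are small relative to the ``size'' of the syzygies, so the constraints from $\sigma_1,\sigma_3,\sigma_5$ (the purely monomial-minor relations) already pin down the $f_{i,k}'$ up to the format freedom, and then the two pfaffian-type relations $\sigma_9,\sigma_{10}$ involving $\sA,\sB,\sD$ impose no further restriction beyond reabsorption into $\sA,\sB,\sD$. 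For part (ii), in degrees $0$ and $1$ there is exactly one extra, non-format first-order deformation in each; I would exhibit it explicitly (it is the class that will ultimately correspond to the $MV$-format direction, i.e.\ the one-parameter family through the cone connecting the two components) and check it satisfies all five liftability conditions, then show by the dimension/Hilbert-function count that nothing else survives, giving $\dim V_k/V_k'=1$.

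The main obstacle is the bookkeeping in the two exceptional low degrees: one has to be careful that the five chosen syzygies really do cut out the same subspace as all sixteen — this is guaranteed mod $J$ by the preceding lemma, but when lifting to first order one must check the implications hold with the $\epsilon$-correction terms, i.e.\ that the cofactor identities $z_1\sigma_2\equiv u\sigma_1$, etc., propagate to the deformed relations; since $x_2,\dots,v$ are nonzerodivisors in $C_R$ this is automatic, and I would state that explicitly. The other delicate point is to verify that the candidate non-format deformation in $V_1$ and $V_0$ is genuinely not obtainable from any extrasymmetric matrix perturbation — for this I would compare weighted degrees of the matrix entries against the degrees of the perturbation, or equivalently exhibit a linear functional on $T^1$ vanishing on $V_k'$ but not on the candidate. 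Everything else is a routine, if lengthy, reduction to membership computations in $\CC[\xi,\eta,\zeta]/(\zeta^3-\eta^5+\xi p')$, which I would summarize in a table of the spaces $V_k$, $V_k'$ and their dimensions rather than reproduce in full.
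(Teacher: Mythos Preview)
Your plan is correct and follows essentially the same route as the paper: work degree by degree in $x_1$, use only the five syzygies $\sigma_1,\sigma_3,\sigma_5,\sigma_9,\sigma_{10}$, normalize modulo the format subspace $V_k'$, and identify what survives. The paper does exactly this, with two small differences worth noting. First, the paper never passes to the parametrizing ring $\CC[\xi,\eta,\zeta]/(p)$; it works directly in $R$ with the variables $x_2,y,z_1,z_2,u,v$, using the relations $f_1,\dots,f_9$ to rewrite terms (your substitution is a legitimate shortcut for the same membership checks). Second, and more importantly for what follows, the paper does not merely assert the existence of the one non-format class in each of $V_1/V_1'$ and $V_0/V_0'$: it writes them down explicitly (parametrized by constants $c_1$ and $c_0$), because these exact expressions are the input to the second-order obstruction computation in the next proposition. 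So when you carry this out, do not stop at a dimension count; record the actual representatives, since the later argument depends on the precise coefficients appearing in $f_{i,1}'$ and $f_{i,0}'$.
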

\begin{proof} For every $k\geq 0$, 
let $F_i^{(1)} = f_i + \epsilon x_1^k f_{i,k}', \ \ 1\leq i \leq 9$ 
be a first order deformation of $C_R$, with
degree $k$ in $x_1$. We 
will compare $V_k'$ and $V_k$ for each $k\geq 0$.

Note that 
$deg(f_1,\dots,f_9) = (4,5,6,6,7,8,8,9,10)$ and $deg\ f_{i,k}'= deg\ 
f_i - k$. If $k>10$, then $f_{i,k}'=0$ by degree reason. If 
$k=10$,
then 
\[
 f_{1,10}' = \cdots = f_{8,10}'=0. 
\] By the relation  
$\sigma_{10}$, we have
\[ -z_2^2 f_{2,10}'  + v f_{4,10}' + z_1 
f_{7,10}' - x_2 f_{9,10}' = 0,
\] and it follows that $f_{9,10}'$ is 
also $0$. A similar argument shows that, if $k=8$ or $9$, then 
$f_{i,k}'=0$ for $1\leq i \leq 9$. Therefore $V_k =
0$ for $k\geq 8$ 
and  also $V_k' = 0$ a fortiori. 

Next we show that  $V_k / V_k' = 0$ for every $2\leq k \leq 7$. Since the calculations 
are similar, we treat only  the case when $k=7$ and leave the
rest of the verifications 
 to the reader.

Case $k=7$: for degree reasons, one has 
$f_{1,7}'=\dots=f_{4,7}'=0$. The syzygy $\sigma_3$ implies that 
\[
z_1 f_{2,7}' - y f_{4,7}' + x_2 f_{5,7}' = 0 
\] and it follows that 
$f_{5,7}'=0$. In turn we have $f_{6,7}'=0$ by $\sigma_5$ . 

Then 
$\sigma_9$ and $\sigma_{10}$ yield  
\[
  y f_{7,7}'= x_2 
f_{8,7}',\hspace{1cm} z_1 f_{7,7}' = x_2 f_{9,7}'
\]
 which implies 
that $(f_{7,7}',f_{8,7}',f_{9,7}') = (c x_2,cy,cz_1)$ with $c\in\CC$, 
but this infinitesimal deformation  is induced by varying one entry of the $6\times 
6$
antisymmetric matrix $N$: $D\mapsto D + \epsilon cx_1^7$.

For (ii) 
(resp. (iii)), we will show that, up to a scalar, there is exactly 
one first order deformation with degree $1$ (resp. $0$) in $x_1$ 
that
cannot be induced by varying the entries of the matrix $N$.

Let us treat now the case $k=1$: as before, we have that  the degrees 
$$deg(f_{1,1}',\dots,f_{9,1}') = (3,4,5,5,6,7,7,8,9).$$ Using the infinitesimal matrix entry 
changes of the form $z_2 \mapsto z_2
+ \epsilon x_1\cdot (\cdots)$ and $x_2 
\mapsto x_2 + \epsilon a x_1$, we can assume $f_{1,1}' = c_1 z_1$. A similar 
change of the entry  $u$ allows us to assume that $f_{2,1}' =
c_2 u$. Then the 
relation $\sigma_1$ gives 
\[
 -z_1 f_{1,1}' + y f_{2,1}' - x_2 
f_{3,1}' = 0
\] and we see
that $f_{2,1}' = 0,\ f_{3,1}' = - c_1 v$. 
Now making an appropriate change at $v$  we can assume 
$f_{4,1}' = 
c_{4,1} y z_2 + c_{4,2} v$. We have the following equations by 
$\sigma_3,\sigma_5$:
\[ - y f_{4,1}' + x_2 f_{5,1}' = 0, 
\hspace{0.5cm} v f_{1,1}' + y f_{5,1}' - x_2 f_{6,1}' = 0
\] and it 
is not hard to see that 
\[ f_{4,1}' = - c_1 y z_2, \hspace{0.5cm} 
f_{5,1}' = - c_1 z_2^2, \hspace{0.5cm} f_{6,1}' = -c_1 D.
\] Next, up 
to appropriate first order changes of $v,z_2^2,D$ in the top left 
corner of $N$ by multiples of $x_1$, we can assume $f_{7,1}' = c_7 
z_2
u$. The relations $\sigma_9,\sigma_{10}$ imply that
\[
 z_2^2 
f_{1,1}' - y f_{7,1}' + x_2 f_{8,1}' = 0, \hspace{0.5cm}  v f_{4,1}' 
+ z_1 f_{7,1}' - x_2 f_{9,1}' = 0
\] and we find that
\[
 c_7 = c_1, 
\hspace{0.5cm} f_{8,1}' = f_{9,1}' = 0
\] Summing up, we have the 
following first order deformation with degree $1$ in 
$x_1$:
\begin{align*}
 &f_{1,1}' = c_1 z_1,& &f_{3,1}' = - c_1 v,& 
&f_{4,1}' = - c_1 y z_2,& \\ &f_{5,1}' = - c_1 z_2^2,&  &f_{6,1}' = 
-c_1 D,& &f_{7,1}' = c_1 z_2 u,
&f_{2,1}' = f_{8,1}' = f_{9,1}' = 
0
\end{align*} which is evidently not induced by the entry changes of 
$N$.

Case $k=0$: since the calculation is similar to the  case $k=1$, 
we leave it to the reader. The codimension of the space of
first order
deformations in degree $0$ that  come by entry 
changes of the matrix $N$ is one and a basis of $V_0/V_0'$ is represented 
by
\begin{align*} & f_{1,0}' = -c_0 u,&&f_{4,0}' = c_0z_2(d_{0,2} 
z_1-z_2),&& f_{7,0}' = -c_0( D_y z_1 + d_{0,2} z_2 v),\\ &f_{2,0}' = 
-c_0 v,&& f_{5,0}'
= -c_0(\delta x_2^7 + D_y y + D_{z_1}z_1),&& 
f_{8,0}' = c_0 (\delta x_2^6 z_1+D_{z_1}v),\\ &f_{3,0}' = 0, && 
f_{6,0}' = -c_0 (\delta x_2^6 y + D_y
z_2+D_{z_1}u),&& f_{9,0}' = 
-c_0D_y v
\end{align*} where we have decomposed $D$ as $D=\delta 
x_2^7 + D_y y + D_{z_1}z_1+ d_{0,2}z_2u$.

\end{proof}

 Recall that the subspace of elements of non-positive grading in  the space of first order deformations $ Hom (J, \BB /J)$ 
 yields  the tangent space
to the Hilbert scheme at the point corresponding to the cone $C_R$.

However, to calculate the tangent space to the Kuranishi family, we must consider isomorphism classes
of first order deformations, i.e., we must divide by the subspace generated by the action of the
Lie algebra of vector fields on the weighted projective spae, the tangent space
to the group of projective automorphisms.

We divide therefore by these  infinitesimal coordinate 
changes and, using 
Proposition \ref{firstdeformation}, we can assume that any first order deformation of the cone $C_R$ 
is equivalent to one of the
form
\begin{equation}\label{firstorderreduction}
F_i^{(1)}  =  f_i+ \epsilon (f_{i,0}' +  x_1 f_{i,1}' +  \sum_{k\geq 
0} x_1^k f_{i,k}''),\  1\leq i \leq 9
\end{equation} where $f_{i,0}'$ 
(resp. $f_{i,1}'$) is the deformation with degree $0$ (resp. $1$) in 
$x_1$ in the  proof of Proposition
\ref{firstdeformation} and  the $ f_i 
+ \sum_{k\geq 0} x_1^k f_{i,k}''$ $(1\leq  i\leq 9)$ are the $4\times 
4$ Pfaffians of the following 
antisymmetric
matrix:
\begin{equation}\label{firstdeformationmatrix}
\sN^{(1)}= 
\left(
 \begin{matrix}
   &\sA^{(1)} &\sB^{(1)} &  z_1 &  y &  x_2\\
&  &\sD^{(1)} & u & z_2 &  y \\
  & &  &  v &  u &  z_1 \\
  &  &  & 
& 0 & 0\\
    &  & &  &  & 0\\
  &&&&&
\end{matrix} 
\right).
\end{equation} with
\begin{align*}
 \sA^{(1)} = & v+ 
\epsilon\sA'=v + \epsilon a_5x_1^5,\hspace{0.5cm} \\
 \sB^{(1)} = & 
z_2^2+ \epsilon \sB'=z_2^2 + \epsilon(b_1 x_1 v + b_2 x_1^2 u + 
b_3x_1^3 z_2 + b_6 x_1^6),\\
 \sD^{(1)} = & D+ \epsilon \sD'=\delta 
x_2^7 + D_y y + D_{z_1}z_1+ d_{0,2}z_2u+\epsilon(\delta' x_2^7 + D'_y 
y + D'_{z_1}z_1+ d'_{0,2}z_2u) \\
             &\hspace{2cm} 
+\epsilon(d_{1,1} x_1 y u + d_{1,2}x_1 z_2^2 + d_{2,1} x_1^2 y z_2 + 
d_{2,2} x_1^2 v + d_3 x_1^3 u \\
             &\hspace{5.5cm}+ 
d_{4,1} x_1^4 z_1 + d_{4,2}x_1^4 z_2 + d_5 x_1^5 y + d_7 
x_1^7).
\end{align*} so that $\sA^{(1)} - v = \epsilon\sA', \sB^{(1)} 
-  z_2^2 = \epsilon \sB', \sD^{(1)} - D = \epsilon \sD'$ are first 
order infinitesimals.
Here we have thrown as many terms as possible 
from $\sA^{(1)}$ and $\sB^{(1)}$ to $\sD^{(1)}$ using the equations 
$f_1=\cdots=f_9=0$. 

\begin{remark} If we use neither the coordinate 
changes nor the entry changes of $N$ in the proof and keep track of 
the free parameters, then we
obtain all the 
dimensions:
\begin{enumerate}
 \item[$\bullet$] $dim_\CC V_k=0$, for 
$k\geq 8$.
 \item[$\bullet$] 
$dim_\CC\{V_7,\dots,V_0\}=\{1,2,4,6,11,16,23,30\}$.
\end{enumerate} 
Since $V_k  = Hom_R(J/ J^2, R)_{-k}$ (\cite[Theorem 1.10]{R1}), these 
dimensions   can be  calculated  in Macaulay 2 for an
explicitly 
assigned $D$  as follows.

 \small
\begin{verbatim}
--We count here the dimensions of first order deformations 
--of the graded ring R=C[x_2,y,z_1,z_2,u,v]/(f_1,...,f_9) with D=0.

AA=QQ[x_2,y,z_1,z_2,u,v,Degrees=>{1,2,3,3,4,5}];
f_1=x_2*z_2-y^2;
f_2=x_2*u-y*z_1;
f_3=y*u-z_1*z_2;
f_4=x_2*v-z_1^2;
f_5=y*v-z_1*u;
f_6=z_2*v-u^2;
f_7=z_1*v-y*z_2^2;
f_8=u*v-z_2^3;
f_9=v^2-z_2^2*u;
I=ideal(f_1,f_2,f_3,f_4,f_5,f_6,f_7,f_8,f_9);
--the ideal of R;
I'=I/I^2;R=AA/I;
H=Hom(I',R);h=hilbertSeries (H,Order => 1)
--h is the hilbert series of Hom_R(I/I^2,R) up to degree 0;
--the coefficients of h are exactly the dimensions of V_k, 0<=k<=7.
\end{verbatim}

\end{remark}

\subsection{Lifting to higher 
orders}\label{sectionsecondlift}

We shall now try to  lift the first order 
deformations obtained in Section~\ref{firstdeformationsection} to higher order. 
We shall do this in several steps, first of all we shall calculate the tangent cone
to the base $\sB$ of the Kuranishi family (equivalently, to the Hilbert scheme).

We shall in this way obtain some quadratic equations which a posteriori
will be shown to yield the equations defining $\sB$; but since we do not want to use computer assisted calculations,
we shall proceed in steps, first showing that these equations define the tangent cone,
then that these equations define $\sB$ after a possible coordinate change,
and only in the proof of the main theorem we shall see that these equations define   $\sB$ 
in the initially chosen coordinates.

\begin{proposition}\label{secondlift} If a first order deformation 
of $C_R$ as defined in (\ref{firstorderreduction})  lifts to a 
genuine deformation,
then 
$$c_0a_5 = c_1a_5 = c_0d_7- c_1 b_6  \equiv 0 \  ( mod \  \mathfrak M^3),$$
$\mathfrak M$ being the maximal ideal of the origin in the vector space
of first order deformations.
\end{proposition}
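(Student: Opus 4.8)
The plan is to carry out the second-order obstruction computation directly, in the spirit of the first-order calculation of Section~\ref{firstdeformationsection}. Starting from the reduced normal form (\ref{firstorderreduction}) of a first order deformation, with parameters $c_0, c_1$ (the coefficients of the extra deformations in degrees $0$ and $1$ from Proposition~\ref{firstdeformation}), $a_5$ (the entry-change of $\sA$), $b_6, d_7$ (specific entry-changes of $\sB^{(1)}$ and $\sD^{(1)}$), I would write a candidate second order extension
\[
F_i^{(2)} = F_i^{(1)} + \epsilon^2 g_i, \qquad 1 \le i \le 9,
\]
now working modulo $\epsilon^3$ rather than $\epsilon^2$, where the $g_i \in \BB$ are to be determined. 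The obstruction to lifting is the failure of the first syzygies $\sigma_1,\dots,\sigma_{16}$ to lift: substituting the $F_i^{(1)}$ into $\sigma_j$ produces a term of order $\epsilon^2$ which must lie in the ideal $(f_1,\dots,f_9) \subset \BB$ for the deformation to be unobstructed to second order, and the residues of these $\epsilon^2$-terms in $\BB/J$ (or rather in the appropriate graded pieces) are exactly the quadratic equations cutting out the tangent cone to $\sB$.

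First I would reduce, as in the lemma preceding Proposition~\ref{firstdeformation}, to checking only the five relations $\sigma_1, \sigma_3, \sigma_5, \sigma_9, \sigma_{10}$: the same equivalences mod $J$ (using that $x_2,\dots,v$ are nonzerodivisors in $C_R$) show that lifting these five forces the lifting of the rest. Then, for each of these five relations, I would compute the $\epsilon^2$-coefficient obtained by plugging in the explicit $f_{i,0}', f_{i,1}'$ from Proposition~\ref{firstdeformation} together with the Pfaffian contributions $f_{i,k}''$ from the matrix $\sN^{(1)}$ of (\ref{firstdeformationmatrix}). The extrasymmetric-format part of the deformation is automatically unobstructed (Corollary~\ref{flexibility} guarantees its syzygies lift to all orders), so the only obstructions come from the cross-terms between the two ``extra'' deformation directions $c_0, c_1$ and the format directions $a_5, b_6, d_7$. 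Tracking the degree in $x_1$ and the weighted degree, one isolates: the term $c_0 a_5$ (product of the degree-$0$ extra deformation with the $\sA$-entry change), the term $c_1 a_5$ (product of the degree-$1$ extra deformation with the $\sA$-entry change), and a term of the shape $c_0 d_7 - c_1 b_6$ coming from the interplay of both extra directions with the top-corner entries of the matrix.

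The main obstacle I expect is bookkeeping, not conceptual difficulty: one must carefully expand the Pfaffians of $\sN^{(1)}$ to second order, keep the explicit forms of $f_{i,0}'$ and $f_{i,1}'$ straight, and verify that after the obvious cancellations (and after absorbing everything that does lie in $J$) precisely the three quantities $c_0 a_5$, $c_1 a_5$, $c_0 d_7 - c_1 b_6$ survive as the residual obstruction classes. I would organize this by relation: show $\sigma_1$ and $\sigma_3$ are liftable without new constraints, that $\sigma_5$ forces $c_0 a_5 \equiv 0$, that $\sigma_9$ forces $c_1 a_5 \equiv 0$, and that $\sigma_{10}$ (the relation involving $f_7, f_9$ and the degree-$7$ and degree-$6$ data) forces $c_0 d_7 - c_1 b_6 \equiv 0$ — all modulo $\mathfrak M^3$, i.e. up to terms that are themselves quadratic or higher in the remaining deformation parameters and hence do not affect the tangent cone. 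Finally I would note that these are genuine obstructions (not artifacts of the chosen lift) because the reduction (\ref{firstorderreduction}) already quotients out the trivial deformations coming from coordinate changes and entry changes, so no further change of the $g_i$ can remove them.
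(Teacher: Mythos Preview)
Your overall strategy is exactly the one the paper uses: write a second-order lift $F_i^{(2)}=f_i+tf_i^{(1)}+t^2f_i^{(2)}$, substitute into the syzygies, and read off the quadratic obstructions in the parameters. You are also right that the extrasymmetric part lifts automatically by Corollary~\ref{flexibility}, so only cross-terms between $c_0,c_1$ and the entry changes $\sA',\sB',\sD'$ matter.

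However, your proposed attribution of constraints to syzygies is wrong, and this hides the actual mechanism. In the paper's computation only $\sigma_1,\sigma_3,\sigma_5$ are needed; $\sigma_9$ and $\sigma_{10}$ play no role. More importantly, the constraints do not arise one-per-syzygy in isolation, but through an \emph{interplay}: the lift of $\sigma_1$ does not by itself impose a relation on the parameters, but it forces the second-order correction $f_2^{(2)}$ to have no $x_1^5$ term. Then the lift of $\sigma_3$ produces a term $c_0a_5\,x_1^5z_1$ which could only be absorbed by $z_1f_2^{(2)}$---and since $f_2^{(2)}$ has no $x_1^5$, this forces $c_0a_5=0$. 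The same $\sigma_3$ also pins down the $x_1^7$ coefficient of $f_5^{(2)}$ as $-c_0d_7$. Finally $\sigma_5$ both forces $c_1a_5=0$ (via an unabsorbable $c_1a_5\,x_1^6z_1$ term) and independently determines the $x_1^7$ coefficient of $f_5^{(2)}$ as $-c_1b_6$; comparing the two determinations gives $c_0d_7=c_1b_6$. So your claim that ``$\sigma_1$ and $\sigma_3$ are liftable without new constraints'' is incorrect, and the third obstruction is obtained not from $\sigma_{10}$ but from a consistency condition between $\sigma_3$ and $\sigma_5$ on the same second-order coefficient. The bookkeeping you anticipate is real, but its shape is this cascade of constraints on the $f_i^{(2)}$, not a clean one-syzygy-one-obstruction pattern.
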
 

\begin{proof} Starting with $F_i^{(1)}$ 
$(1\leq i\leq 9)$, we can write a one parameter family of deformations of order $n$ as
\[  F_i^{(n)} =  f_i + t f_i^{(1)} + \cdots + t^n f_i^{(n)},\ 1\leq i 
\leq 9.
\] where $f_i^{(1)} = f_{i,0}' +  x_1 f_{i,1}' +  \sum_{k\geq 
0} x_1^k f_{i,k}''$ is the part defining $F_i^{(1)}$  in 
(\ref{firstorderreduction}) and
$t$ is an infinitesimal parameter of 
order $n$ (i.e. $t^{n+1}=0$). Therefore the $F_i^{(n)}$ are elements in 
$\BB[t]/(t^{n+1})$. For $m\leq n$, there is a
natural surjection of 
rings 
\[
 \BB[t]/(t^{n+1})\rightarrow \BB[t]/(t^{m+1}),
\] and we 
denote by $F_i^{(m)}$ the image of $F_i^{(n)}$ in $\BB[t]/(t^{m+1})$. 
The first syzygies $\sigma_1,\cdots,\sigma_{16}$ between the $f_i$ should 
lift
to those between the $F_i^{(m)}$ for any $m\leq n$, so that the $F_i^{(m)}$ 
define a deformation of $C_R$ of order $m$.

The relation $\sigma_1$ 
between $f_i$ lifts to the second order as 
 \begin{align*}
 -z_1 
F_1^{(2)} + y F_2^{(2)}  - x_2 F_3^{(2)}  = &t(c_1 x_1 F_4^{(2)}-c_0 
F_5^{(2)} ) - t^2 c_0^2 (D-d_{0,2}z_2u)+ t^2c_1^2 x_1^2 yz_2\\ & - 
t^2
c_0c_1  d_{0,2}  x_1 z_1 z_2 + t^2(-z_1 f_1^{(2)} + y f_2^{(2)} + 
x_2 f_3^{(2)})
\end{align*}
and from this we see that $f_2^{(2)}$ does 
not contain $x_1^5$.

The first syzygy $\sigma_3$ lifts as 

\begin{align*}\label{thirdsyzygy}
 z_1 F_2^{(2)} - y F_4^{(2)}  + 
x_2 F_5 ^{(2)} = &t(c_0 (d_{0,2} z_2 F_2^{(2)} - F_7^{(2)}) - c_1 x_1 
z_2 F_1^{(2)}) \\
                                  & 
+t^2(c_1^2x_1^2z_1z_2-c_0^2D_yz_1+c_0 ( z_1\sA'  -  y \sB' + x_2 
\sD'))\\
                                  &+t^2(z_1 f_2^{(2)} - y 
f_4^{(2)}  + x_2 f_5 ^{(2)})
\end{align*}

 Note that $t^2c_0z_1\sA'$ 
contains $t^2 c_0a_5x_1^5 z_1$ and $t^2c_0 x_2 \sD'$ contains $t^2 
c_0d_7x_1^7x_2$ (cf. (\ref{firstdeformationmatrix})).
Since $t^2 
c_0a_5x_1^5 z_1$ could only be cancelled by  $t^2z_1f_2^{(2)}$, but 
unfortunately $f_2^{(2)}$ does 
not contain $x_1^5$,  $c_0a_5$ must be $0$. 
Besides
$t^2 c_0d_7x_1^7x_2$ can only be absorbed into 
$t^2x_2f_5^{(2)}$, so the coefficient of $x_1^7$ in $f_5^{(2)}$ is 
$-c_0d_7$.

The relation $\sigma_5$ gives

\begin{align*}
 v F_1^{(2)} 
- u F_2^{(2)} + y F_5^{(2)} - x_2 F_6^{(2)} = & t(c_1 x_1 F_7^{(2)} + 
c_0 (D_yF_1^{(2)} + D_{z_1}F_2^{(2)})\\
& + t^2 (c_0^2(uD_y+vD_{z_1})+c_0c_1d_{0,2} x_1 z_2 v 
+c_1^2x_1^2z_2u\\
 &-t^2c_1x_1(z_1\sA' -  y\sB' + x_2\sD') \\
 &\hspace{2cm}  + t^2(v f_1^{(2)} - u f_2^{(2)} + y f_5^{(2)} - x_2 
f_6^{(2)}) 
\end{align*}

The term $t^2c_1a_5x_1^6 z_1$ in $t^2 
c_1x_1z_1\sA'$ cannot be absorbed in any of $t^2 f_i^{(2)}$, 
$i=1,2,5,6$, hence $c_1a_5 = 0$. On the other hand
the term $t^2c_1 
b_6 x_1^7y$ in $t^2c_1x_1y\sB'$ can only be absorbed into 
$t^2yf_5^{(2)}$, so the coefficient of $x_1^7$ in $f_5^{(2)}$ is 
$-c_1
b_6$. Comparing with the coefficient of $x_1^7$ in $f_5^{(2)}$ 
determined by  $\sigma_3$ above, we obtain
$ c_0d_7 = c_1 b_6 = 0$. 
Summing up, we have the following restrictions on the 
coefficients:
\[
 c_0a_5 = 0,\quad c_1a_5 = 0,\quad c_0d_7=c_1 
b_6.
\]
\end{proof}

For the reader's benefit, we observe that the above equations describe the algebraic set 
$$ \{ a_5 = c_0d_7- c_1  b_6 = 0 \} \cup \{ c_0 = c_1 = 0 \} ,$$
which is not a complete intersection since it has codimension two, while the space of quadrics containing
it has dimension three.

\subsection{The moduli space}
Let's come back 
to our original problem about the moduli space of even surfaces with 
$K^2 = 8$, $p_g = 4$, $q = 0$. 

In the next lemma we are essentially 
continuing with the previous calculations, except that we
set for 
convenience 
$$ a : = a_5, \  d: = d_7, \ b :=  b_6.$$

Our purpose 
is to show that our previous equations, which were among the 
equations defining the tangent cone to the base
of the Kuranishi 
family, are indeed the equations of the base of the Kuranishi 
family.

\begin{lemma}\label{tangentcone}
Set
$$ f_0 : =  c_0 a 
,\quad f_1 : =  c_1 a ,\quad g : = c_0 d  - c_1 b. $$

Let $\sP$ be the polynomial ring $\CC [ a,b,c_0 , c_1,d, u_1, \dots 
u_m]$ and consider
an ideal  $ J \subset \sP$ such that $J$ contains polynomials $F_0, 
F_1, G$ such that
$$ F_0 \equiv  f_0  \  ( mod \ \mathfrak M^3) \ \  F_1 \equiv  f_1  \ 
( mod \ \mathfrak M^3) \ \ G \equiv  g  \  ( mod \ \mathfrak M^3,)$$
$\mathfrak M$ being the maximal ideal of the origin.

Let $W$ the subscheme associated to $J$, and assume that $W$ contains 
two distinct irreducible components of
codimension $2$. Then, up to an analytic change of coordinates, we 
may assume that
$$ (**) J = ( f_0, f_1, g ) , \    W = W_1 \cup W_2 , \ W_1 =  V ( 
c_0, c_1) , \ W_2 = V (a, g).$$

In particular, $W$ is schematically the union of two irreducible 
components of codimension $2$ which are complete intersections.
\end{lemma}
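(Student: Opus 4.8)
The plan is to show that the three ``leading terms'' $f_0 = c_0 a$, $f_1 = c_1 a$, $g = c_0 d - c_1 b$ of the generators $F_0, F_1, G$ already control the geometry of $W$ locally, and then to invoke the hypothesis that $W$ carries two distinct codimension-two components in order to pin down everything. First I would observe that the three polynomials $f_0, f_1, g$ cut out (set-theoretically) exactly $\{a = g = 0\} \cup \{c_0 = c_1 = 0\}$ inside $\operatorname{Spec}\sP$, as already remarked in the paragraph preceding the lemma; each of these two pieces is a smooth complete intersection of codimension two, and they meet in the codimension-three locus $\{a = c_0 = c_1 = 0\}$ (note $g$ vanishes there automatically). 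The key algebraic input is that the ideal $(f_0, f_1, g)$ is \emph{radical} and \emph{unmixed} of codimension two: this can be checked by a primary decomposition, $(c_0 a, c_1 a, c_0 d - c_1 b) = (a, c_0 d - c_1 b) \cap (c_0, c_1)$, the right-hand side being an intersection of two primes (the first is prime because $\sP/(a, g) \cong \CC[b, c_0, c_1, d, u_\bullet]/(c_0 d - c_1 b)$ is a domain, being a quadric cone which is irreducible, and the second is obviously prime).

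Next I would pass from the leading terms to the actual ideal $J$. The generators $F_0, F_1, G$ agree with $f_0, f_1, g$ modulo $\mathfrak M^3$, so the tangent cone of $J$ at the origin is contained in $V(f_0, f_1, g)$; in particular every component of $W$ through the origin has codimension at most two, and the two components we are promised must therefore be of codimension exactly two with tangent cones among the (at most two) components of $V(f_0, f_1, g)$. Since $V(f_0, f_1, g)$ has exactly two codimension-two components, $W_1$ and $W_2$ must have tangent cones equal to $V(c_0, c_1)$ and $V(a, g)$ respectively (in some order). For the component whose tangent cone is the \emph{linear} space $V(c_0, c_1)$, smoothness of the tangent cone forces the component itself to be smooth of codimension two, hence after an analytic coordinate change we may take it to be literally $\{c_0 = c_1 = 0\}$; this is where one spends the ``analytic change of coordinates'' allowed by the statement. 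One then has to be a little careful that the same coordinate change does not destroy the normal form of the other generators — but since the coordinate change can be chosen to fix $a, b, d$ and the $u_j$ and only adjust $c_0, c_1$ by higher-order terms, the leading terms $f_0, f_1, g$ are preserved.

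Having normalized $W_1 = V(c_0, c_1)$, I would then argue that $J$ itself, not just its tangent cone, equals $(f_0, f_1, g)$. The inclusion $J \subset (f_0, f_1, g)$ is the subtle direction and I would get it from unmixedness: $V(f_0, f_1, g)$ is pure of codimension two with the two stated components, $W = V(J)$ contains two codimension-two components, and since $W \subseteq V(f_0,f_1,g)$ set-theoretically (by the tangent-cone inclusion applied along each component, or more directly because $F_0 \equiv f_0$ etc.\ forces $V(J) \subseteq V(f_0,f_1,g)$ near the origin once we know the codimension is two), $W$ must be exactly the union of those two reduced components; comparing with the primary decomposition of $(f_0, f_1, g)$ and using that a codimension-two unmixed ideal is determined by its primary components, we get $J = (f_0, f_1, g)$ after the coordinate change. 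Finally, identifying $W_2 = V(a, g)$ and noting both $(c_0, c_1)$ and $(a, g) = (a, c_0 d - c_1 b)$ are complete intersections gives the last sentence. The main obstacle I expect is the careful bookkeeping in the middle step: extracting from ``$F_\bullet \equiv f_\bullet \pmod{\mathfrak M^3}$'' plus ``$W$ has two codimension-two components'' the conclusion that $J$ is \emph{exactly} $(f_0,f_1,g)$ and not some ideal with the same radical but extra embedded or higher-order structure — this is precisely where the hypothesis on the number and dimension of the components of $W$ must be used to rule out, e.g., $J = (f_0, f_1, g) + \mathfrak M^3\cdot(\text{something})$ defining a thickening, and it forces the coordinate change to be chosen so as to absorb exactly the cubic-and-higher discrepancies between $F_\bullet$ and $f_\bullet$.
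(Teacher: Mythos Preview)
Your opening analysis is sound: the primary decomposition $(c_0a,\,c_1a,\,c_0d-c_1b)=(a,\,c_0d-c_1b)\cap(c_0,c_1)$ is correct and useful, and the tangent-cone argument identifying $W_1,W_2$ with components having cones $V(c_0,c_1)$ and $V(a,g)$ is exactly right, as is the normalization $W_1=V(c_0,c_1)$ via an analytic change of the $c_i$.

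The gap is in your final paragraph. You assert that after normalizing $W_1$ one has $V(J)\subseteq V(f_0,f_1,g)$ set-theoretically, and from this plus unmixedness you want $J=(f_0,f_1,g)$. But neither justification you offer for the containment is valid: knowing that the tangent cone of $W$ at the origin sits inside $V(f_0,f_1,g)$ says nothing about $W$ itself away from first order, and $F_i\equiv f_i\pmod{\mathfrak M^3}$ certainly does not force $V(F_i)\subseteq V(f_i)$. Concretely, after your single coordinate change the component $W_2$ is some codimension-two germ whose tangent cone is $V(a,g)$, but it need not lie inside $V(a,g)$; so there is no containment to feed into an unmixedness argument. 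You yourself flag this as ``the main obstacle'' and say the coordinate change must ``absorb exactly the cubic-and-higher discrepancies,'' but you do not indicate any mechanism for doing so, and insisting that the change fix $a,b,d$ actually prevents it.

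The paper closes this gap not by an abstract unmixedness argument but by direct algebraic manipulation of $F_0,F_1,G$, using \emph{further} coordinate changes in $a,b,c_1$ (not only in $c_0,c_1$). After writing $F_0=c_0a+c_1\beta_1$, $F_1=c_1(a+\alpha)+c_0\beta_0$, $G=c_0d-c_1b$ (possible once $W_1=V(c_0,c_1)$), one intersects $\overline{W\setminus W_1}$ with $\{c_0=0\}$; the hypothesis that this has codimension $3$ forces $(a+\alpha)\mid\beta_1$, so after subtracting a multiple of $F_1$ one gets $F_0=c_0a$. A parallel argument on $W\cap V(a,G)$ forces $F_1\in(a,G)$, and a final rescaling of $c_1,b$ brings $F_1$ to $c_1a$. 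In short, the codimension-two hypothesis on $W_2$ is used \emph{pointwise} to extract divisibility relations among the coefficients of the $F_i$, not as an input to a global ideal-theoretic comparison. Your sketch would become a proof if you replaced the unmixedness paragraph with this kind of stepwise reduction.
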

\begin{proof}

We divide the proof in several steps.

{\bf Step 1.} Observe that $W$ is a subscheme of $ V (I)$, where $I$ 
is the ideal $ I : =  ( F_0, F_1, G ) $.

W.L.O.G. we may replace $J$ by $I$. In fact, once the final assertion 
$(**)$ is proven for $I$,
$W$ is a subscheme of $V(I) = W_1 \cup W_2$ containing two 
irreducible components of codimension $2$,
hence $ W =  V(I)$, and the proof is finished.

{\bf Step 2.} Assume henceforth $J = I$. We observe first that $W$ 
cannot have any component of codimension $1$.
Else there would be a function $F$ dividing $F_0, F_1, G $: in 
particular the leading form $f$ of $F$ would divide
$f_0, f_1, g$, therefore $f$ would be a constant , and $F$ would be a 
unit in the local ring of the origin.

Let $Z_i : = V ( F_i , G)$: the same argument shows that neither 
$Z_0$ nor $Z_1$ has a component of codimension $1$,
hence $Z_0, Z_1$ are complete intersections of codimension $2$.

{\bf Step 3.} Consider now the irreducible decomposition $ W = \cup_i 
W_i$ of $W$ into irreducible components.
By what we have seen, each $W_i$ has codimension either $2$ or $3$. 
We assume that $W_1$ and $W_2$
have codimension equal to $2$.

{\bf Step 4.} Consider now the tangent cone $\sC$ of $W$ at the 
origin.  The irreducible decomposition $ W = \cup_i W_i$ of $W$
yields a decomposition $ \sC = \cup_i \sC_i$.
Observe moreover that $\sC$ is a subscheme of
$$W' := V( f_0, f_1, g ) = L \cup Q , \  L : =  V ( c_0, c_1) , \  Q 
:  = V (a, g).$$
We may assume W.L.O.G. that $L$ is the tangent cone of $W_1$, and $Q$ 
is the tangent cone of $W_2$

{\bf Step 5.} Note that $W' =  L \cup Q$ holds schematically, and we 
have a corresponding projective subscheme of codimension $2$ and 
degree $3$
of which  $ \sC$ has a subscheme. Hence there are no other components 
$W_i$ of codimension $2$: these would contribute to a higher degree 
of the subscheme $ \sC$. Hence, if there are other components $W_3, 
\dots$, they have codimension $3$.
We shall now show that these latter do not exist.

{\bf Step 6.} Since the tangent cone $L$ to $W_1 $ is smooth, then 
$W_1$ is smooth at the origin, and, by  a suitable local change of 
coordinates, we may assume that
$$ W_1 = V ( c_0, c_1).$$

Since  moreover $ F_0, F_1, G $ are in the ideal $ ( c_0, c_1)$, we 
obtain, after a suitable change of coordinates
$$ F_0 = c_0 a + c_1 \beta_1 , \  F_1 = c_1 ( a + \alpha ) +   c_0 
\beta_0, \ G = c_0 d - c_1 b,$$
where $\alpha, \beta_0, \beta_1$ have all order at least $2$ at the origin.
Moreover, we can assume (changing the coordinate $a$ and adding 
possibly a multiple of $G$ to $F_0$ and $F_1$  that

{\bf (6.1)  the variables $c_0$ and $b$ do not appear in  $ \beta_1$, 
and $\alpha$}

{\bf (6.1)  the variable  $c_1$ does not appear in  $ \beta_0$.}

{\bf Step 7.} Consider now $ W \setminus W_1$, and intersect with 
$c_0=0$: we obtain the algebraic set
$$ c_0 = b = \beta_1 = (a + \alpha) = 0,$$
which must have codimension $3$.

It follows that $ (a + \alpha) $ divides $\beta_1 $. We can therefore 
subtract a multiple of $F_1$ to $F_0$
and obtain that $c_0$ divides $F_0$.

Whence, we can finally assume that $ F_0 = c_0 a$.

{\bf Step 8.} Now, the components of $ V ( F_0 , G)$ are $W_1= L $, 
$Q = V (a, G)$ and $ V (c_0, b)$.

But $W \cap  V (c_0, b) = \{ c_0 = b = c_1 (a + \alpha) =0 \}$, which 
has  codimension $3$.
Whereas $ W \cap Q =  V ( a, G, c_1 \alpha + c_0 \beta_0)$; this 
component must have codimension $2$,
so it must be $Q$, and $F_1$ belongs to the ideal $ ( a, G)$. We can 
subtract a multiple of
$G$ to $F_1$ hence we may obtain that  $a$ divides $F_1$, i.e., that 
$a$ divides $\beta_0$ and $\alpha$.

{\bf Step 9.} We may now write
$$  F_1 = a ( c_1 (1 + A) + c_0 B),$$
hence, subtracting a multiple of $F_0$ to $F_1$, we may assume that $ 
B \equiv 0$.

Furthermore, multiplying by the unit $(1 + A)$ and its inverse the 
variables $c_1$ and $b$,
we finally get new coordinates where
$$  F_0 = c_0 a ,  F_1 =  c_1 a , G = c_0 d - c_1 b.$$

This is exactly what we wanted to show.

\end{proof}

\begin{theorem}\label{thatsallfolks}
$M_{8,4,0}^{ev}$ is connected and its irredundant  irreducible decomposition is
$M_{8,4,0}^{ev}=M_{\sF} \cup M_{\sE}$.
\end{theorem}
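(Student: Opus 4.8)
The plan is to turn the problem into a local statement about the cone $C_R$ and then to invoke Lemma \ref{tangentcone}, thereby avoiding any computation of obstructions to all orders. First I would dispose of the base point free case: by Theorem \ref{fablemma} (equivalently, by Proposition \ref{MV} with $c_0\neq 0$) the even surfaces whose canonical system is base point free are exactly the points of $M_{\sF}$. So it remains to show that every even surface $S$ with $K_S^2=8$, $p_g(S)=4$, $q(S)=0$ and $|K_S|$ \emph{not} base point free has $[S]\in M_{\sF}\cup M_{\sE}$. For such an $S$, pick a general smooth curve $C\in|L|$; by Lemma \ref{basepoint} and Proposition \ref{rc2p} the ring $R=R(C,2Q)$ has the shape described there, and the half-canonical ring $R(X,L)$ of the canonical model $X$ is a degree one extension ring of $R$, so $X$ is a small deformation of the cone $C_R$. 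By the results recalled in Section \ref{subschemedeformationsection} (Theorem \ref{gradedringdeformation} and Pinkham's smoothness theorem) the germ of the Gieseker moduli space at $[S]$ is, up to the finite group $\Aut(X)$, dominated by a germ of the Hilbert scheme which is smooth over the Kuranishi space $\sB$ of $C_R$; hence it suffices to determine $\sB$.

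Now I would feed the infinitesimal computations into Lemma \ref{tangentcone}. Proposition \ref{firstdeformation}, together with the normal form (\ref{firstorderreduction}) obtained after dividing out infinitesimal automorphisms, realizes $\sB$ as a subgerm of a smooth germ with coordinates $a=a_5$, $b=b_6$, $c_0$, $c_1$, $d=d_7$ and further ``unobstructed'' coordinates $u_1,\dots,u_m$ (among them the coefficients of $D$), and Proposition \ref{secondlift} shows that the ideal of $\sB$ contains $F_0\equiv c_0a$, $F_1\equiv c_1a$, $G\equiv c_0d-c_1b$ modulo $\mathfrak M^3$; consequently the tangent cone of $\sB$ lies in the pure codimension two locus $V(c_0a,\,c_1a,\,c_0d-c_1b)$, so every component of $\sB$ has codimension at least two. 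On the other hand the extrasymmetric family of Proposition \ref{ME} has $c_0=c_1=0$ at first order, hence tangent cone $V(c_0,c_1)$, while the $c_0\neq 0$ family of Proposition \ref{MV} has $a$ frozen to $0$ by its format and satisfies $c_0d-c_1b=0$ (the constraint $c_0\sD_x=l\sB_x$ specializes to this), hence tangent cone contained in $V(a,\,c_0d-c_1b)$; being $35$-dimensional and distinct (the second has base point free general member, the first does not), these two families supply two distinct irreducible components of $\sB$ of codimension two. Therefore Lemma \ref{tangentcone} applies, and after an analytic change of coordinates $\sB=W_1\cup W_2$ with $W_1=V(c_0,c_1)$ and $W_2=V(a,\,c_0d-c_1b)$, both complete intersections of codimension two.

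It remains to interpret this and to assemble the global statement. On $W_1$ the deformation stays of extrasymmetric type ($c_0=c_1=0$), so $W_1$ maps into $M_{\sE}$ and, having the same dimension, is a germ of it; the general point of $W_2$ has $c_0\neq 0$, so by Proposition \ref{MV} it parametrizes a surface with base point free canonical system, i.e.\ $W_2$ maps into $M_{\sF}$. Hence the moduli germ at $[S]$ is contained in $M_{\sE}\cup M_{\sF}$; in particular $[S]\in M_{\sE}\cup M_{\sF}$, which together with the base point free case yields $M_{8,4,0}^{ev}\subseteq M_{\sE}\cup M_{\sF}$, the reverse inclusion being Propositions \ref{ME} and \ref{MV}. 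Both $M_{\sE}$ and $M_{\sF}$ are irreducible of dimension $35$ and neither is contained in the other (their general members differ by base point freeness of $|K|$), so $M_{8,4,0}^{ev}=M_{\sF}\cup M_{\sE}$ is the irredundant irreducible decomposition; and since $M_{\sF}\cap M_{\sE}\supseteq M_{\sE\sF}\neq\emptyset$ by Proposition \ref{MV}, the union is connected.

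The main obstacle is exactly the step feeding Lemma \ref{tangentcone}: one has to know that the quadratic relations of Proposition \ref{secondlift} are \emph{all} the relations of $\sB$, with no higher order obstructions. Instead of computing obstructions to all orders (the route that originally required heavy computer assistance), the device is to use the a priori existence of the two explicit $35$-dimensional families inside $\sB$, which by pure local algebra forces the completed local ideal to agree, up to coordinate change, with $(c_0a,\,c_1a,\,c_0d-c_1b)$. The point requiring the most care is checking that these two families genuinely cut out two \emph{distinct} codimension two components of $\sB$, and matching $W_1$ with $M_{\sE}$ and $W_2$ with $M_{\sF}$.
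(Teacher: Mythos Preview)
Your proposal is correct and follows essentially the same route as the paper: reduce to the cone $C_R$, use Proposition~\ref{secondlift} to get three quadratic relations in the ideal of $\sB$, use the two explicit families from Propositions~\ref{ME} and~\ref{MV} to produce two distinct codimension-two irreducible pieces inside $\sB$, and then invoke Lemma~\ref{tangentcone} to conclude that $\sB$ is exactly their union; connectedness comes from $M_{\sE\sF}\subset M_{\sE}\cap M_{\sF}$. The only cosmetic difference is that the paper phrases the verification of the hypothesis of Lemma~\ref{tangentcone} by showing directly that the explicit quadratic locus $\sB''=\{c_0a_5=c_1a_5=c_0d_7-c_1b_6=0\}$ is contained in $\sB$ (checking case by case that each point of $\sB''$ is realized by one of the two formats, after normalizing so that the coefficient of $x_2^7$ in $D$ vanishes), whereas you argue via the tangent cones and dimensions of the two families; both amount to the same thing.
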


\begin{proof}
We already know that $M_{\sF} , M_{\sE}$ are irreducible unirational subsets
of $M_{8,4,0}^{ev}$: indeed each of them is  defined as the closure of a morphism
from an open set of an affine space into the moduli space.

We also know that $M_{\sF} $ is the closure of the open set  $M_{\sF} ^0$ of $M_{8,4,0}^{ev}$
consisting of surfaces with base point free canonical system: hence
$M_{\sF} $ is clearly an irreducible component of $M_{8,4,0}^{ev}$,
and it suffices to show that every surface not in  $M_{\sF} ^0$ lies either
in $M_{\sF} $ or in $ M_{\sE}$, and that these two subsets do intersect.
This will accomplish the proof.

Suppose then  that $S$ is an even surface with $K_S^2 = 8$, $p_g = 4$, $q = 0$ 
and that $|K_S|$ is not base point free.
As before, let $L$ be a half-canonical divisor, i.e., $K_S=2L$. 
Let $X$ be the canonical model of $S$.

Then $X$ is a small deformation of the cone $C_R$ over a smooth hyperplane section $H$ of $X$,
and the basis of the Kuranishi family $\sB$ of $C_R$ consists,  by \ref{ME}, \ref{MV}, \ref{secondlift} and \ref{tangentcone} of two irreducible components:
we want to show that the open set  $\sB'$ corresponding to the smoothings of $C_R$
remains connected, and that the points of one component of $\sB'$ correspond to
canonical models in $M_{\sF} $ and the points of the other to  canonical models in $ M_{\sE}$,
thus our statement shall follow.

We can translate everything back into the algebraic description of the extension rings  of $R=R(C,2Q)$ for a smooth 
$C\in |L|$ and $Q\in bs|L|$
(see Section \ref{sectionhyperplane}). 

The 
half-canonical ring
$R(X,L)$ is an  extension ring  of $R=R(C,2Q)$ hence  it  can be put in 
the form of (\ref{firstorderreduction}),
(we take now a small and general specialization $\epsilon \in \CC$) and, by Proposition \ref{secondlift}, 
$c_0a_5=c_1a_5=c_0d_7-c_1b_6=0$ , up to higher order terms.
Let us note that, up to a coordinate change in $R$ we can always assume that
the coefficient of $x_2^7$ in $D$ vanishes: just choose  $\zeta$ and 
$\eta$ with a common zero.

Now, if  $c_0=c_1=0$, the equations are in the extrasymmetric format of 
Proposition \ref{ME},
so $X$ is an element of $M_{\sE}$.
Else, $a_5=0$. Moreover, since we assumed the vanishing of the 
coefficient of $x_2^7$ in $D$, we can
decompose $\sB$ and $\sD$ as in Proposition \ref{MV} with 
$\sB_x=b_6x_1^6$ and $\sD_x=d_7x_1^7$.
Then $c_0d_7-c_1b_6=0$ gives $c_1x_1\sB_X=c_0\sD_x$.

 It follows that 
$R(X,L)$ is as described
in Proposition \ref{MV} with $l=c_1x_1$, and therefore $X \in M_{\sF}$.

 We see then directly that the base $\sB$ of the Kuranishi family of $C_R$ contains the subset   $\sB'' : = \{c_0a_5=c_1a_5=c_0d_7-c_1b_6=0\},$
hence  by Lemma \ref{tangentcone} it  equals $\sB$ and we have shown that $X$ belongs to  $M_{\sF} \cup M_{\sE}$.

Finally, the condition $M_{\sF} \cap M_{\sE} \neq 0$ was shown already in Proposition \ref{MV}.

\end{proof}


\begin{thebibliography}{9}
\bibitem[Art66]{artin} Artin, Michael;  On isolated rational singularities of surfaces. Amer. J. Math. 88 (1966) 129--136
\bibitem[B01]{B} I. Bauer; Surfaces with $K^2=7$ and $p_g=4$. Mem. 
Amer. Math. Soc. 152 (2001), no. 721, viii+79 pp.
\bibitem[BC08]{45} Bauer, Ingrid C.; Catanese, Fabrizio;  A volume 
maximizing canonical surface in 3-space. Comment. Math. Helv. 83 
(2008), no. 2, 387--406.
\bibitem[BCP02]{BCP1} I. Bauer; F. Catanese; R. Pignatelli; Canonical 
rings of surfaces whose canonical system has base points.  Complex 
geometry
(G\"ottingen, 2000), 37--72, Springer, Berlin, 2002.
\bibitem[BCP06a]{BCP2} I. Bauer; F. Catanese; R. Pignatelli; The 
moduli space of surfaces with $K^2=6$ and $p_g=4$.  Math. Ann.  336 
(2006),  no. 2,
421--438.
\bibitem[BCP06b]{BCP3} I. Bauer; F. Catanese; R. Pignatelli; Complex 
surfaces of general type: some recent progress. Global aspects of 
complex geometry,
1--58, Springer, Berlin, 2006.
\bibitem[BP09]{BP} I. Bauer; R. Pignatelli; Surfaces with $K^2=8,\ 
p_g=4$ and canonical involution. Osaka J. Math. 46 (2009), no. 3, 
799--820.
 \bibitem[Bea79]{beau} A. Beauville; L' application canonique pour les surfaces de type g\'en\'eral. Invent.Math. 55 (1979), 121--140.
\bibitem[Cat81]{babbage}Catanese, F.;  Babbage's conjecture, contact 
of surfaces, symmetric determinantal varieties and applications. 
Invent. Math. 63 (1981), no. 3, 433--465.
\bibitem[Cat84]{bucharest} Catanese, F.; Commutative algebra methods 
and equations of regular surfaces. Algebraic geometry, Bucharest 1982 
(Bucharest, 1982),  Lecture Notes in Math., 1056, Springer, Berlin, 
(1984)68--111.
\bibitem[Cat97]{homalg} Catanese, F.;  Homological algebra and 
algebraic surfaces. `Algebraic geometry-Santa Cruz 1995',  Proc. 
Sympos. Pure Math., 62, Part 1, Amer. Math. Soc., Providence, RI, 
(1997)3--56.
\bibitem[Cat99]{sbc} Catanese, F.;  Singular bidouble covers and the 
construction of interesting algebraic surfaces. Algebraic geometry: 
Hirzebruch 70 (Warsaw, 1998), Contemp. Math., 241, Amer. Math. Soc., 
Providence, RI, (1999) 97--120.
\bibitem[Cat08]{catanese}  Catanese, Fabrizio; Differentiable and deformation type of algebraic surfaces, real and symplectic structures. Symplectic 4-manifolds and algebraic surfaces, 55--167, Lecture Notes in Math., 1938, Springer, Berlin, 2008.
\bibitem[CS02]{cs} Catanese, F.; Schreyer, F.O.; Canonical 
projections of irregular algebraic surfaces. Algebraic geometry, de 
Gruyter, Berlin, (2002)79--116.
\bibitem[C81]{C} C. Ciliberto; Canonical surfaces with $p_g=p_a=4$ 
and $K^2=5,\cdots ,10$. Duke Math. J. 48 (1981), no. 1, 121--157.
\bibitem[CFM97]{CFM} C. Ciliberto; P. Francia; Lopes, M, Mendes 
Lopes, Remarks on the bicanonical map for surfaces of general type. 
Math. Z. 224 (1997),
no. 1, 137--166.
 \bibitem[CR02]{CR} A. Corti; M. Reid; Weighted Grassmannians. Algebraic geometry, 141--163, de Gruyter, Berlin, 2002.
\bibitem[De82]{debarre}Debarre, O.;
In\'egalit\'es num\'eriques pour les surfaces de type g\'en\'eral.
With an appendix by A. Beauville.
Bull. Soc. Math. France 110 (1982), no. 3, 319--346.
\bibitem[D82]{D} I. Dolgachev; Weighted projective varieties. Group 
actions and vector fields (Vancouver, B.C., 1981), 34--71, Lecture 
Notes in Math.,
956, Springer, Berlin, 1982.
\bibitem[E95]{E1} D. Eisenbud; Commutative algebra, with a view 
toward algebraic geometry. GTM 150. Springer-Verlag, New York, 1995.
\bibitem[E05]{E2} D. Eisenbud; The geometry of syzygies. A second 
course in commutative algebra and algebraic geometry. Graduate Texts 
in Mathematics,
229. Springer-Verlag, New York, 2005.
\bibitem[Enr49]{enriques} Enriques, F;  Le Superficie Algebriche. 
Nicola Zanichelli, Bologna, (1949) xv+464 pp.
\bibitem[Fra39]{franchetta} Franchetta, A.;  Su alcuni esempi di 
superficie canoniche. Rend. Sem. Mat. Roma 3, (1939). 23--28.
\bibitem[G85]{G} E. E. Griffin; Families of quintic surfaces and 
curves.  Compositio Math.  55  (1985),  no. 1, 33--62.
\bibitem[Groth64]{grothendieck}  Grothendieck, Alexander;
 Techniques de construction et th\'eor\`emes d'existence en g\'eom\'etrie alg\'ebrique. IV. Les sch\'emas de Hilbert. 
 S\'eminaire Bourbaki, Vol. 6, Exp. No. 221, 249--276, reprinted by Soc. Math. France, Paris, (1995).
 \bibitem[H-S04]{sturmfels} Haiman, Mark,  Sturmfels, Bernd;
Multigraded Hilbert schemes. 
J. Algebraic Geom. 13 (2004), no. 4, 725--769. 
\bibitem[GW78]{GW} S. Goto; K. Watanabe; On graded rings. I. J. Math. 
Soc. Japan 30  (1978), no. 2, 179--213.
\bibitem[Ha77]{Ha1} R. Hartshorne, Algebraic geometry. Graduate Texts 
in Mathematics, 52. Springer-Verlag, New York-Heidelberg, 1977.
\bibitem[Ha10]{Ha2} R. Hartshorne, Deformation theory. Graduate Texts 
in Mathematics, 257. Springer, New York, 2010.
\bibitem[Ho76a]{Ho1} E. Horikawa; Algebraic surfaces of general type 
with small $c^2_1$ I. Ann. of Math. (2) 104 (1976), no. 2, 357--387.
\bibitem[Ho76b]{Ho2} E. Horikawa; Algebraic surfaces of general type 
with small $c^2_1$ II. Invent. Math. 37 (1976), no. 2, 121--155.
\bibitem[Ho78]{Ho3} E. Horikawa; Algebraic surfaces of general type 
with small $c^2_1$ III. Invent. Math. 47 (1978), no. 3, 209--248.
\bibitem[Kod65]{kodaira} Kodaira, K. ; On characteristic systems of 
families of surfaces with ordinary singularities in a projective 
space.
Amer. J. Math. 87 (1965) 227--256.
\bibitem[Matsu86]{matsumura} Matsumura, Hideyuki; Commutative ring theory. 
Translated from the Japanese by M. Reid. Cambridge Studies in Advanced Mathematics, 8. Cambridge University Press, 
Cambridge, 1986. xiv+320 pp. 
\bibitem[Max37]{maxwell} Maxwell E.A.; Regular canonical surfaces of 
genus three and four, Proc. Camb. Phil. Soc. 23 (1937), 306-310.
\bibitem[MP00]{MP} M. Mendes Lopes; R. Pardini; Triple canonical 
surfaces of minimal degree. Internat. J. Math. 11 (2000), no. 4. 
553--578.
\bibitem[Mum73]{mumford} Mumford, D.;  A remark on the paper of M. 
Schlessinger. Complex analysis, 1972 (Proc. Conf., Rice Univ., 
Houston, Tex., 1972), Vol. I: Geometry of singularities. Rice Univ. 
Studies 59 (1973), no. 1, 113--117.
\bibitem[Noe70]{noether} Noether, M.; Zur Theorie des eindeutigen 
Entsprechens algebraischer Gebilde. Math. Ann. 2 (1870), 293--316; 
(1875), 495-533.
\bibitem[Ol05]{O} P.A. Oliverio; On even surfaces of general type 
with $K^2=8$, $p_g=4, q=0$.  Rend. Sem. Mat. Univ. Padova  113 
(2005), 1--14.
\bibitem[Pi74]{pinkham} Pinkham, H. C.; Deformations of algebraic 
varieties with $G^m$ action. Ast\'erisque, No. 20. Soci\'et\'e 
Math\'ematique de France, Paris, (1974) i+131 pp.
 \bibitem[Pig12]{CanPen} R. Pignatelli; On surfaces with a canonical pencil. Math. Z. 270 (2012), no. 1-2, 403--422. 
\bibitem[R88]{R1} M, Reid;  Infinitesimal view of extending a 
hyperplane section--deformation theory and computer algebra. 
Algebraic geometry
(L'Aquila, 1988), LNM 1417, 214--286, Springer Verlag.
\bibitem[R04]{R4} M. Reid; Constructing algebraic varieties via 
commutative algebra, in Proc. of 4th European Congress of Math 
(Stockholm 2004),
European Math Soc. 2005, 655-667.

\bibitem[Sch73]{schlessinger} Schlessinger, M.; On rigid 
singularities. Complex analysis, 1972 (Proc. Conf., Rice Univ., 
Houston, Tex., 1972), Vol. I: Geometry of singularities. Rice Univ. 
Studies 59 (1973), no. 1, 147--162.
\bibitem[Sern06]{Sern} E, Sernesi; Deformations of algebraic schemes. 
Grundlehren der Mathematischen Wissenschaften, 334. Springer-Verlag, 
Berlin, 2006.
 \bibitem[Xia85]{Xiao} G. Xiao; Surfaces fibr\'ees en courbes de genre deux. In: Lecture Notes in Mathematics, vol. 1137. Springer-Verlag, Berlin, 1985.
\end{thebibliography}
\end{document}